\newtheorem{proposition}{Proposition}[section]
\newtheorem{corollary}[proposition]{Corollary}
\newtheorem{apptheorem}[proposition]{Theorem}
\newtheorem{theorem}{Theorem}
\newtheorem{lemma}[proposition]{Lemma}
\theoremstyle{definition}
\newtheorem{definition}[proposition]{Definition}
\newtheorem{remark}[proposition]{Remark}
\DeclareMathAlphabet{\mathpzc}{OT1}{pzc}{m}{it}
\numberwithin{equation}{section}
\numberwithin{figure}{section}
\newcommand\unnumberedfootnote[1]{ %
        \let\temp=\thefootnote %
        \renewcommand{\thefootnote}{}%
        \footnote{#1}%
        \let\thefootnote=\temp%
        \addtocounter{footnote}{-1}}
\renewcommand{\@fnsymbol}[1]{\@arabic{#1 }}
\begin{document}

\title{\LARGE The fixation probability and time for a doubly
  beneficial mutant}

\author{\sc S. Bossert\footnote{Abteilung Mathematische Stochastik,
    Universit\"at Freiburg, D - 79104 Freiburg},
  P. Pfaffelhuber$\mbox{}^{1,}$\footnote{Corresponding author; email
    p.p@stochastik.uni-freiburg.de}}

\date{\today}

\maketitle

\begin{abstract}
  For a highly beneficial mutant $A$ entering a randomly reproducing
  population of constant size, we study the situation when a second
  beneficial mutant $B$ arises before $A$ has fixed. If the selection
  coefficient of $B$ is greater than the selection coefficient of $A$,
  and if $A$ and $B$ can recombine at some rate $\rho$, there is a
  chance that the double beneficial mutant $AB$ forms and eventually
  fixes. We give a convergence result for the fixation probability of
  $AB$ and its fixation time for large selection coefficients.
\end{abstract}

\unnumberedfootnote{Keywords: Interacting Wright--Fisher diffusions;
  ancestral selection recombination graph; branching process
  approximation; inhomogeneous branching process with
  immigration} 

\unnumberedfootnote{AMS Subject Classification: 92D15 (Primary);
  60J80, 60J85, 92D10 (Secondary)}

\section{Introduction}
The spread of a beneficial mutant in a constant size population is a
well-studied model in mathematical population genetics. A first
approximation of its fixation probability has already been established
by \cite{haldane1927}, and the theory of one-dimensional diffusions
can be used in order to obtain properties of the fixation time
\citep{KimuraOhta1969}; see also \cite{EtheridgeEtAl2006} and
\cite{HermissonPennings2005}. Adding recombination with a neutral
locus rises new questions about the genealogy at the neutral locus as
well as an opportunity to detect selection from a population sample
(see e.g.\
\citealp{MaynardSmithHaigh1974,Barton1998,StephanWieheLenz1992,KaplanHudsonLangley1989,sweepfinder}).

However, population genetic models become much more complex if we
assume that a second (different) beneficial mutant arises during the
spread of the first. Due to recombination, there is a chance that the
first and the second beneficial mutant recombine to form an even
fitter type. While \cite{OttoBart1997} and \cite{Barton1998} have
studied the case that the second allele is less beneficial than the
first, we will concentrate on the opposite case. This is even more
interesting since only a more beneficial second mutant has a chance to
survive against an almost fixed first mutant and form the fittest
recombinant type. This model of competing selective sweeps has been
studied in a series of papers \citep{stephan1995, KirbyStephan1996,
  chevinetal2008, YuEth2012, HartfieldOtto2011, Cuthetal2012}, but the
picture is not complete yet.

Basic questions are the fixation probability, fixation time and the
pattern of this scenario in genetic data under the competing sweeps
model. In the present paper, we are dealing with the first two
questions using a diffusion model. This complements previous work by
\cite{YuEth2012} and \cite{Cuthetal2012}, who use a Moran model and
studied the fixation probability of the fittest recombinant type. We
are able to extend their results in several respects: (i) the
probability of fixation is given explicitly (in the limit of large
selection coefficients); (ii) conditional on fixation, we obtain an
approximation of the fixation time of the fittest recombinant type.

The main method we use is based on the ancestral selection graph
\citep{KroNeu1997,NeuKrone1997}, which describes possible ancestral
lines in selective population genetic models. Recently, this graph has
been used to study the process of fixation for selective sweeps
\citep{PokPfa2012,greven2014fixation}. Since we are dealing with two
recombining loci, we have to follow \cite{GrifMarj1997} and add
recombination events to this graph in order to obtain the ancestral
selection recombination graph. This approach is not new and is
implicit in the ancestral influence graph of
\cite{DonnellyKurtz1999}{\color{black}; see also \cite{Fearnhead2002}}.
\cite{LessardKermany2012} combined selection and recombination in one
graph for a fixed population size and a Markov chain in discrete time
and \cite{mano2008} analysed the evolution of duplicated genes under
the influence of selection and recombination. However, the limit of
large selection coefficients including recombination has not been
studied using the ancestral selection {\color{black} recombination}
graph before.

After introducing the diffusion model in Section~\ref{S:model}, we
give our main results. In Section~\ref{S:ASRG}, we explain
our main technique, the ancestral selection recombination graph (ASRG)
and translate the event of fixation within the diffusion model to
properties of the ASRG in terms of a Markov jump process. Before we
come to the proofs of the main results, we give auxiliary results in
Section~\ref{S:aux}. Then, in Section~\ref{S:proofs}, we are ready to
give the proofs of our Theorems.

\section{Model and main result}
\label{S:model}
\subsection{Competing sweeps}
We use the standard diffusion model from population genetics including
selection and recombination (see e.g.\ \citealp{OhtaKimura1969b,
  EthierKurtz1993, Ewens2004}) with the four types
\begin{align*}
  0\equiv ab, \qquad 1 \equiv Ab, \qquad 2\equiv aB, \qquad 3\equiv AB,
\end{align*}
which have selection coefficients
$0=\alpha_0 < \alpha_1 < \alpha_2 < \alpha_3 \equiv \alpha$. The
evolution of the frequencies of these types is governed by the
solution of the system of SDEs
\begin{equation}\label{eq:diffsystem} 
  \begin{aligned}
    dX_0 & = \Big(- X_0 \sum_j \alpha_j X_j + \rho(X_1X_2 -
    X_0X_3)\Big)dt +
    \sum_{j\neq 0} \sqrt{X_0X_j}dW_{0j},\\
    dX_1 & = \Big(X_1\Big(\alpha_1 - \sum_j \alpha_j X_j\Big) +
    \rho(X_0X_3 - X_1X_2)\Big)dt
    + \sum_{j\neq 1} \sqrt{X_1X_j}dW_{1j},\\
    dX_2 & = \Big(X_2\Big(\alpha_2 - \sum_j \alpha_j X_j\Big) +
    \rho(X_0X_3 - X_1X_2)\Big)dt
    + \sum_{j\neq 2} \sqrt{X_2X_j}dW_{2j},\\
    dX_3 & = \Big(X_3\Big(\alpha_3 - \sum_j \alpha_j X_j\Big) +
    \rho(X_1X_2 - X_0X_3)\Big)dt + \sum_{j\neq 3}
    \sqrt{X_3X_j}dW_{3j},
  \end{aligned}
\end{equation}
where $(W_{kl})_{k>l}$ is a family of independent Brownian motions and
$W_{lk} =-W_{kl}$. Note that $X_{0}+X_{1}+X_{2}+X_{3}=1$ for all
times, if the initial state satisfies this relation. Here, $X_i(t)$
denotes the frequency of type $i$ at time $t$. We will write
$\mathcal X = (\underline X(t))_{t\geq 0}$ with
$\underline X = (X_0, X_1, X_2, X_3)$ for a solution of
~\eqref{eq:diffsystem}, whose existence and uniqueness follows from
standard theory \citep{EthierKurtz1993}.

\subsection{Main results}
We now give our main results on the fixation probability and fixation
time (conditioned on fixation). Their proofs are given in
Section~\ref{S:proofs}.

\begin{theorem}[Fixation probability of type $3\equiv AB$]\label{theoremfixprob2}
  Let {\color{black}$c, \psi >0$ and $0<\delta< 1$} and
  $\mathbb P_{\underline{x}_{\delta,\psi}}$ be the distribution of the
  solution $\mathcal{X}$ of \eqref{eq:diffsystem}, started in
  $\underline{x}_{\delta,\psi} = (1-\delta-c\alpha^{-\psi},
  c\alpha^{-\psi},\delta,0)$. Assume that
  \begin{equation*}
    \frac{\alpha_{i}}{\alpha}  \xrightarrow{\alpha \to \infty} c_{i},
  \end{equation*} 
  with $0 < c_{1} < c_{2} < c_{3}=1$. 
  \begin{enumerate}
  \item If $\psi < \frac{c_{1}}{c_{2}}$, the fixation probability of
    $3\equiv AB$ satisfies
    \begin{equation}\label{finalfixresult}
      \lim_{\alpha \to \infty} 
      \frac{1}{{\color{black}2}\alpha}\lim_{\delta \to 0} \frac{1}{\delta}  
      \mathbb{P}_{\underline{x}_{\delta,\psi}}
      (X_{3}(\infty)=1) 
      = c_{2}\left(1-\left(\frac{1-c_{2}}{1-c_{1}}\right)^{\frac{2\rho 
            (1-c_{2}) (1-c_{1})}{(c_{2}-c_{1})^2}}\right).
    \end{equation}
  \item If {\color{black}$\psi >\frac{c_{1}}{c_{2}}$}, the fixation
    probability of $AB$ satisfies
    \begin{equation}\label{finalfixresult0}
      \lim_{\alpha \to \infty} 
      \frac{1}{\alpha}\lim_{\delta \to 0} \frac{1}{\delta}  
      \mathbb{P}_{\underline{x}_{\delta,\psi}}
      (X_{3}(\infty)=1) = 0.
    \end{equation}
  \end{enumerate}
\end{theorem}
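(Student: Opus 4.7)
The plan is to combine the ASRG machinery of Section~\ref{S:ASRG} with a two-phase analysis matched to the iterated limits. The outer limit $\delta\to 0$ isolates the behaviour of a single infinitesimal $aB$-lineage, while the inner limit $\alpha\to\infty$ makes the subsequent four-type dynamics concentrate on deterministic trajectories and turns the rare birth of successful $AB$-individuals (via recombination between $Ab$ and $aB$) into an inhomogeneous Poisson process.

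In the first phase, the $aB$-population of initial mass $\delta$ is well approximated by a supercritical Feller-type branching process with growth rate $\alpha_2$ on a wild-type background. Standard theory yields
\begin{equation*}
  \delta^{-1}\,\mathbb P_{\underline x_{\delta,\psi}}(aB\text{ establishes}) \xrightarrow{\delta\to 0} 2\alpha_2,
\end{equation*}
which, divided by $2\alpha$ and sent to $\alpha\to\infty$, produces the prefactor $c_2$ in \eqref{finalfixresult}. In the second phase, conditional on establishment, the pair $(X_1,X_2)$ concentrates on the solution of the deterministic two-type selection system on the fast time scale $\tau=\alpha t$. An $AB$-individual is born by recombination at rate $\rho\,X_1(\tau)\,X_2(\tau)$ and, seeing the $aB$-dominated background as effectively wild-type, survives with branching-process probability $\sim 2(\alpha_3-\alpha_2)=2\alpha(1-c_2)$. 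A Poisson superposition argument---formally implemented in the ASRG by following the independent $A$- and $B$-ancestral lineages of a sampled $AB$-individual---then yields
\begin{equation*}
  \mathbb P(X_3(\infty)=1 \mid aB\text{ establishes}) \approx 1-\exp\!\left(-2\rho(1-c_2)\int_0^\infty X_1(\tau)\,X_2(\tau)\,d\tau\right).
\end{equation*}

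To evaluate the integral I would exploit the explicit first integral $X_1(\tau)/X_2(\tau)=(X_1(0)/X_2(0))\,e^{(c_1-c_2)\tau}$ of the rescaled system and reduce $\int X_1 X_2\,d\tau$ to a one-dimensional quadrature parametrized by $V=X_1/X_2$. After substituting the effective initial conditions coming from the matching with phase~1, this quadrature evaluates to $\frac{1-c_1}{(c_2-c_1)^2}\log\frac{1-c_1}{1-c_2}$, reproducing the exponent in \eqref{finalfixresult}. Case~(ii) then follows by an order-of-magnitude comparison: for $\psi>c_1/c_2$ the initial $Ab$-mass $c\alpha^{-\psi}$ is too small to overlap macroscopically with the $aB$-trajectory during the $O(\alpha^{-1})$-window in which $X_2$ becomes macroscopic, so $\rho\int X_1 X_2\,d\tau=o(1)$ in $\alpha$ and the conditional fixation probability vanishes.

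The principal technical hurdle is making this matching rigorous: because $\delta\to 0$ is taken before $\alpha\to\infty$, the effective initial condition of $X_2$ entering phase~2---conditional on $aB$-establishment---is not simply $\delta$ but reflects the random time (of order $\alpha_2^{-1}\log(1/\delta)$) at which $aB$ first escapes the noise regime; during this same time $X_1$ may itself have grown to macroscopic size, so the deterministic trajectories of phase~2 need not start near the origin. The ASRG is the right tool for circumventing this difficulty: it replaces the forward-in-time conditioning on $aB$-establishment by a backward-in-time condition that the $B$-ancestor of a sampled $AB$-individual be traced to the $aB$-cohort, turning the problem into a computation of hitting probabilities in the backward graph whose asymptotic behaviour is supplied by the auxiliary results of Section~\ref{S:aux}.
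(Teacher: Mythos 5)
There is a genuine gap at the heart of your second phase. You assume that a freshly created $AB$-recombinant ``sees the $aB$-dominated background as effectively wild-type'' and therefore survives with the \emph{constant} probability $\sim 2(\alpha_3-\alpha_2)$. But recombinants can only be produced while $X_1X_2$ is appreciable, i.e.\ exactly during the window in which $aB$ takes over from $Ab$, and the recombinant's own establishment happens on the same $O(1/\alpha)$ time scale on which this background mixture changes. Consequently its survival probability is genuinely time-dependent and must be computed from a time-inhomogeneous branching process; in the paper this is exactly the role of Theorem~\ref{T:tibd} (Kendall's formula) and Proposition~\ref{P:49}, which give
\begin{equation*}
S_t \;=\; 2\,\frac{(\alpha-\alpha_1)(\alpha-\alpha_2)}{(\alpha-\alpha_1)X_2(t)+(\alpha-\alpha_2)X_1(t)},
\end{equation*}
interpolating between $2(\alpha-\alpha_1)$ (early, mostly-$Ab$ background) and $2(\alpha-\alpha_2)$ (late). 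Only with $S_t$ inside the integral does the logarithm appear: $\int X_1X_2S_t\,dt = \tfrac{2(\alpha-\alpha_1)(\alpha-\alpha_2)}{(\alpha_2-\alpha_1)^2}\log\tfrac{\alpha-\alpha_1}{\alpha-\alpha_2}$. With your constant factor the exponent would instead be $2\rho(1-c_2)\int X_1X_2\,d\tau = \tfrac{2\rho(1-c_2)}{c_2-c_1}$, since $dX_2=(c_2-c_1)X_1X_2\,d\tau$ forces $\int X_1X_2\,d\tau=\tfrac{1}{c_2-c_1}$ --- this is precisely the \emph{lower bound} of the paper's Remark on bounds, not the limit in \eqref{finalfixresult}. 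Your claim that the quadrature evaluates to $\tfrac{1-c_1}{(c_2-c_1)^2}\log\tfrac{1-c_1}{1-c_2}$ is therefore inconsistent with your own reduction: it is the value one gets only after reinstating the time-dependent $S_t$, so the stated answer is reached by fiat rather than by the argument given.

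A secondary but substantive point: what you describe is essentially the paper's heuristics section, and the rigorous backbone is missing. The paper does not perform a forward-in-time two-phase matching with $\delta\to0$ first; it converts the fixation event, via the duality of Proposition~\ref{P:duality}, the reversibility of the equilibrium line-counting process (Propositions~\ref{P:fix}, \ref{P:fixL3}) and the comparison Lemma~\ref{l:tv}, into extinction of types $0,1,2$ in the Markov jump process $\mathcal L$ of Definition~\ref{def:L0} (Proposition~\ref{P:fixProbL}), and then carries out the trajectory analysis of $\mathcal L$ on the $\tfrac{\log\alpha}{\alpha}$ scale (Proposition~\ref{P:fixScenario}), with the factor $c_2$ arising as the survival probability of the single type-$2$ line and $p$ from the inhomogeneous branching process with immigration of Proposition~\ref{P:49}. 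Your closing paragraph correctly identifies the matching problem and gestures at the ASRG as the remedy, but the specific mechanism you propose (tracing the $B$-ancestor of a sampled $AB$-individual back to the $aB$-cohort) is not developed and is not what the auxiliary results of Section~\ref{S:aux} supply; as written, the proposal would need both the corrected time-inhomogeneous survival computation and the duality/reversibility reduction to become a proof.
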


\begin{theorem}[Fixation time of type $3\equiv AB$]\label{T:fixTime}
  Assume the same situation as in Theorem~\ref{theoremfixprob2}.1.\
  and let $S := \inf\{t: X_3(t)=1\}$ (with $\inf\emptyset =
  \infty$). Then, for all $\varepsilon>0$
  \begin{align*}
    \lim_{\alpha\to\infty} \lim_{\delta\to 0}\mathbb P_{\underline x_{\delta, \psi}}
    \Big(\Big|\frac{\alpha}{\log\alpha} S - \Big(\frac{1-\psi}{c_2-c_1} + \frac{2}{1-c_2}\Big)
    \Big|>\varepsilon \Big|X_3(\infty)=1\Big) = 0.
  \end{align*}
\end{theorem}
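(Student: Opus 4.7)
The plan is to decompose the fixation time as $S = T_F + T_G$, where $T_F$ is the time at which the ``successful'' recombinant $AB$-lineage (the one whose descendants eventually fix) is first created, and $T_G := S - T_F$ is the subsequent sweep time for that lineage. On the slow scale $\tau := \alpha t/\log\alpha$ I will show, letting first $\delta\to 0$ and then $\alpha\to\infty$, that $\alpha T_F/\log\alpha \to (1-\psi)/(c_2-c_1)$ and $\alpha T_G/\log\alpha \to 2/(1-c_2)$, both in conditional probability given $X_3(\infty)=1$. Summing these gives the constant claimed in the theorem. The main tools are the ancestral selection--recombination graph (ASRG) from Section~\ref{S:ASRG} and the inhomogeneous-branching-process-with-immigration approximations from Section~\ref{S:aux}.

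For $T_F$, observe that, conditional on the $aB$-sweep (a prerequisite for $X_3(\infty)=1$, itself of probability $\asymp \delta$), the Doob $h$-transform for the two-type $\{0,2\}$-sweep and the deterministic large-$\alpha$ dynamics of $(X_0,X_1,X_2)$ yield three piecewise-linear phases for the log-frequencies $y_i(\tau):=-\log X_i(t)/\log\alpha$ in case~1 ($\psi<c_1/c_2$). In Phase~I, $\tau\in(0,\psi/c_1)$, one has $y_0=0$, $y_1=\psi-c_1\tau$, $y_2=1-c_2\tau$. In Phase~II, $\tau\in(\psi/c_1,\tau_{12})$ with $\tau_{12}:=(1-\psi)/(c_2-c_1)$, one has $y_1=0$ and $y_2=1-\psi-(c_2-c_1)\tau$. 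In Phase~III, $\tau>\tau_{12}$, one has $y_2=0$ and $y_1=(c_2-c_1)(\tau-\tau_{12})$. The rate at which successful-recombination events seed a fixing $AB$-lineage is $\lambda(\tau)=2\alpha(1-\bar c(\tau))\rho X_1(\tau)X_2(\tau)$; plugging the phase formulas in gives $\lambda(\tau)\propto\alpha^{\psi+(c_2-c_1)\tau}$ throughout Phase~II and $\lambda(\tau)\propto\alpha^{1-(c_2-c_1)(\tau-\tau_{12})}$ throughout Phase~III, so $\lambda$ is exponentially (on the $\log\alpha$ scale) peaked at $\tau=\tau_{12}$. A Laplace-type analysis of the cumulative hazard $\int_0^\tau\lambda(s)\,ds$ of the associated inhomogeneous Poisson-like process (identified via the ASRG) then yields that the first successful seed, conditional on at least one existing, occurs with $\alpha T_F/\log\alpha\to\tau_{12}$ in probability.

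For $T_G$, note that at time $T_F$ we are in Phase~III: $X_2\approx 1$, while $X_0,X_1$ are at $\alpha^{-\Theta(1)}$. The freshly-seeded $AB$-lineage therefore grows against an $aB$-background with effective selection coefficient $\alpha_3-\alpha_2=\alpha(1-c_2)(1+o(1))$, and the standard Kimura--Ohta asymptotics for the fixation time of a single beneficial mutant in the diffusion limit (in the form suited to our setting, supplied in Section~\ref{S:aux}) yield $T_G=\frac{2\log\alpha}{\alpha(1-c_2)}(1+o_{\mathbb P}(1))$.

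The main technical obstacle is the double conditioning (on $aB$-sweep and on $AB$-fixation) under the iterated limit $\delta\to 0$, then $\alpha\to\infty$. Concretely, three pieces must be glued together: the conditional law of $X_2$ on the event $\{aB\text{ sweeps}\}$, obtained from the $h$-transform and showing convergence of $y_2$ to the Phase~I/II profile above; the propagation of this conditioning to $(X_0,X_1)$ via the large-$\alpha$ deterministic limit, with error controlled uniformly in $\delta$ and $\alpha$; and the ASRG-dual branching-process approximation for $X_3$ through Phases~II and~III, which is where the concentration of $T_F$ at $\tau_{12}$ really lives. In the ASRG picture the statement reads as follows: going backwards from a sampled $AB$-individual at the fixation time $S$, the $A$- and $B$-locus lineages split at the successful recombination event at time $S-T_F$, and the content of the theorem is precisely that this split occurs at the crossover time $\tau_{12}$ separating the $X_1$- and $X_2$-dominated regimes.
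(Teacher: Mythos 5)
Your blueprint is the paper's own heuristic made forward-in-time: the decomposition $S=T_F+T_G$, the three-phase profile of the log-frequencies, the hazard of successful recombinations peaking at $\tau_{12}=\frac{1-\psi}{c_2-c_1}$, and the subsequent sweep of length $\frac{2}{1-c_2}$ all match the constants $\tau_2$ and $\tau_4$ appearing in Proposition~\ref{P:fixScenario}. But as a proof there is a genuine gap, and it is exactly the part you label ``the main technical obstacle'' and then do not carry out. Your route requires controlling the four-dimensional diffusion \emph{conditioned on the terminal event} $\{X_3(\infty)=1\}$ (whose probability vanishes like $2\alpha\delta\,c_2p$ under the iterated limit $\delta\to0$, $\alpha\to\infty$): the Doob $h$-transform of \eqref{eq:diffsystem}, the propagation of that conditioning to $(X_0,X_1)$ with errors uniform in $\delta$ and $\alpha$, the law of the creation time of the \emph{successful} recombinant under the fixation conditioning (a size-biasing you only gesture at via ``conditional on at least one existing''), and a Kimura--Ohta-type concentration result for the conditioned sweep duration. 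None of these are supplied, and none of them follow from Section~\ref{S:aux}: the lemmas there concern birth--death processes arising from the dual jump process $\mathcal L$, not the conditioned forward diffusion, so your repeated appeal to ``the form suited to our setting, supplied in Section~\ref{S:aux}'' does not close the argument.

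The paper avoids all of this by never conditioning the forward diffusion at all. Proposition~\ref{P:fixProbL}.2 converts the finite-time event $\{\tfrac{\alpha}{\log\alpha}S<\tau\}$ into the dual event $\{L_j(\tau\tfrac{\log\alpha}{\alpha})=0,\ j\neq 3\}$ for the time-reversed ASRG process $\mathcal L$ (this step itself needs the restart-at-$\Pi$, monotonicity and exchangeability argument with $N=\lfloor\alpha\rfloor$ lines -- a nontrivial ingredient your sketch has no substitute for), and Proposition~\ref{P:fixScenario} shows that this dual probability tends to $c_2p$ for $\tau>\tau_4$ and to $0$ for $\tau<\tau_4$. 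The conditioning on $X_3(\infty)=1$ then reduces to dividing numerator and denominator, both converging to $c_2p$, so the conditional statement is immediate. If you want to pursue your forward decomposition rigorously, you would in effect have to re-prove this finite-time duality and then add the $h$-transform machinery on top of it, which is strictly more work than the paper's argument; alternatively, recast your phases as statements about $L_1,L_2,L_3$ and you recover precisely the proof of Proposition~\ref{P:fixScenario}.
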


\begin{remark}[Additive selection]
  For additive selection, we have $c_{1}+c_{2}=c_3=1$, and
  \eqref{finalfixresult} in this case turns into
  \begin{equation*}
    \lim_{\alpha \to \infty} \frac{1}{{\color{black}2}\alpha}\lim_{\delta \to 0} \frac{1}{\delta}  
    \mathbb{P}_{\underline{x}}(X_{AB}(\infty)=1) = c_{2}\left(1-
      \left(\frac{c_{1}}{c_{2}}\right)^{\frac{2\rho c_{1} c_{2}}{(c_{2}-c_{1})^2}}\right).
  \end{equation*} 
  This limit result matches with the approximation result presented in
  (5) of~\cite{HartfieldOtto2011} for a semi-deterministic model. The
  result on the fixation time from Theorem~\ref{T:fixTime} translates
  for additive selection to
  \begin{align*}
    \lim_{\alpha\to\infty} \lim_{\delta\to 0}\mathbb P_{\underline x_{\delta, \psi}}
    \Big(\Big|\frac{\alpha}{\log\alpha} S - \Big(\frac{2c_2 - (1+\psi)c_1}{c_1(c_2-c_1)}\Big)
    \Big|>\varepsilon \Big| {\color{black}X_3(\infty)=1}\Big) = 0.
  \end{align*}
  Since $2c_2 - (1+\psi)c_1 > 2(c_2-c_1)$, this fixation time is even
  longer than for a single beneficial allele with selection
  coefficient $\alpha_1$.
\end{remark}

\begin{remark}[Interpretation for finite populations]
  The limit results can be used as an approximation for finite
  populations with initial value $x_2 = \delta = 1/N$. Writing $X^N$
  for frequencies in the finite model, $s_i:=\alpha_i/N, i=0,...,3$,
  and inserting this value in \eqref{finalfixresult} leads to the
  approximation
  \begin{equation*}
    \begin{aligned}
      \mathbb{P}_{\underline{x}}(X^N_{3}(\infty)=1) \approx
      2s_{2}\left(1-\left(\frac{1-c_{2}}{1-c_{1}}\right)^{\frac{2\rho
            (1-c_{2}) (1-c_{1})}{(c_{2}-c_{1})^2}}\right).
    \end{aligned}
  \end{equation*}
  In addition, the fixation time from Theorem~\ref{T:fixTime} is
  approximately
  $$ \Big(\frac{1-\psi}{c_2-c_1} + \frac{2}{1-c_2}\Big)\frac{\log N}{s}$$
  generations.
\end{remark}

\begin{remark}[Comparison with results by \cite{YuEth2012} and \cite{Cuthetal2012}]
  In \cite{YuEth2012} and \cite{Cuthetal2012}, a similar model was
  analyzed. More precisely, they study a finite Moran model, but use
  as their main scenario $\psi < c_1/c_2$ and compute an approximation
  of the fixation probability. {\color{black} The case
    $\psi > c_1/c_2$, which we also treat in
    Theorem~\ref{theoremfixprob2} above, in fact is split in two more
    cases. If $\psi>1$, the initial frequency
    $X_{Ab}(0) = c\alpha^{-\psi} \ll \alpha^{-1}$ and therefore,
    $1\equiv Ab$ has a small chance to establish at all. If
    $c_1/c_2 < \psi \leq 1$, both $1 \equiv Ab$ and $2\equiv aB$ have
    a chance to establish but $aB$ cannot reach a macroscopic
    frequency and there is no chance that the recombinant type
    $3\equiv AB$ forms; see below for more heuristics. The case
    $\psi=c_1/c_2$ is more difficult due to the discontinuity, which
    arises between the two parts of Theorem~\ref{theoremfixprob2}.}

  The main difference between {\color{black} the results
    from \cite{Cuthetal2012} } and ours is that they
  formulate the fixation probability in terms of a solution of ODEs
  (see (2.8) in~\cite{Cuthetal2012}), while \eqref{finalfixresult} is
  explicit. This difference comes from the phase where the recombinant
  type $3\equiv AB$ forms. Here, using the full finite Moran model
  seems to be more difficult than working with the Ancestral Selection
  Recombination Graph, which consists of only a subset of all possible
  events arising in the population.
\end{remark}

\subsection{Heuristics}
All results in Theorems~\ref{theoremfixprob2} and~\ref{T:fixTime} can
already be understood heuristically. We will make use of several
intuitions:
\begin{enumerate}
\item If the best type in the population has a small frequency, it can
  be approximated by a supercritical branching process, i.e.\ a
  solution of 
  \begin{align}
    \label{eq:supBr}
    dX = \alpha X dt + \sqrt X dW.
  \end{align}
  Here, $\alpha$ is the fitness advantage against the bulk of the
  population. Recall that the survival probability of the SDE
  \eqref{eq:supBr}, starting in $X_0 = x$ is given by
  $1-e^{-2\alpha x}$. Hence, if $x\sim \alpha^{-\psi}$ for some
  $\psi\in [0,1)$, the fixation probability approaches~1
  {\color{black}, while the fixation probability approaches~0 for $\psi>1$}.\\
  The solution of~\eqref{eq:supBr} can be conditioned to survive. In
  this case, it reaches $\varepsilon>0$ by time approximately
  $\frac{\log\alpha}{\alpha}$.
\item If a type is {\it established} in the population, which means
  that its survival probability is close to~1, but its frequency is
  still small, its frequency can be approximated by the logistic
  equation $dX = \alpha X(1-X) dt$ (where $\alpha$ is the fitness
  advantage against the bulk). In particular, for small $X$, the
  growth of $X$ is exponential with rate $\alpha$. Moreover, the time
  it takes to reach $1-\varepsilon$ starting in $\varepsilon>0$ is
  $O(1/\alpha)$.
\item If a type in the population has small frequency, and its fitness
  disdvantage against the bulk is $\alpha$, it can be approximated by
  a subcritical branching process, i.e.\ a solution of
  \begin{align*}
    dX = -\alpha X dt + \sqrt X dW.
  \end{align*}
  When started in $\varepsilon>0$, it dies out (i.e.\ $X$ hits~0) by
  time approximately $\frac{\log\alpha}{\alpha}$.
\end{enumerate}

\subsubsection*{...on the fixation probability} 
Since the initial configuration is
$X(0) = (1 - \delta - \alpha^{-\psi}, \alpha^{-\psi}, \delta, 0)$,
most individuals are type $0\equiv ab$ at the beginning. The survival
probability of type~$1\equiv Ab$ is approximately
$1-e^{-2\alpha_1 \alpha^{-\psi}} \approx 1$ {\color{black} if $\psi<1$
  and $\approx 0$ if $\psi>1$. In the latter case, fixation of
  $3\equiv AB$ is not possible, so we focus on the former case. We} have approximately
$dX_1 = \alpha_1 X_1 dt$ until $X_1$ reaches some small
$\varepsilon>0$. By the exponential growth,
$X_1 = \alpha^{-\psi} e^{\alpha_1 t}$, so the hitting time of
$\varepsilon$ is not earlier than
\begin{align}\label{eq:t1}
  t_1 := (\psi \log\alpha + \log\varepsilon)/\alpha_1 \approx
  \frac{\psi}{c_1} \frac{\log\alpha}{\alpha}.
\end{align}
Moreover, for small $\delta$, type $2\equiv aB$ has a chance of
$1-e^{-2\alpha_2\delta} \approx 2\alpha_2\delta \approx 2\alpha\delta
c_2$
of surviving (recall that type~2 has to survive only against type~0
since~1 is still in low frequency; note that $\delta$ goes to~0 first
in Theorem~\ref{theoremfixprob2}). If type~2 survives, it follows the
SDE $dX = \alpha_2 X_2 dt + \sqrt{X_2}dW$ (at least until $X_1$
becomes too large), conditioned on survival, so it hits
$\varepsilon>0$ not earlier than at time
\begin{align*}
  t_1' := \frac{\log\alpha_2}{\alpha_2} \approx \frac{1}{c_2}\frac{\log\alpha}{\alpha}.
\end{align*}
Comparing~$t_1$ and $t_1'$, we have to distinguish two cases. We start
with the simpler one, which leads to 2.\ in the Theorem: If
$\psi > \frac{c_1}{c_2}$, we find that $t_1'<t_1$, i.e.\ $X_2$ hits
$\varepsilon$ before $X_1$, and shortly after $t_1'$, we have that
$X_2\approx 1$, so $X_1$ becomes subcritical. In particular, $X_1$
cannot have any macroscopic frequency, and type $3\equiv AB$ has no
chance to form by recombination. This already explains
\eqref{finalfixresult0}. However, if $\psi < \frac{c_1}{c_2}$ (see 1.\
in the Theorem), we have $t_1<t_1'$, i.e.\ the frequency $X_1$ hits
$\varepsilon$ quicker than $X_2$. Shortly after $t_1$, $X_1$ reaches
frequency~1. So, by this time we have
$X_0\approx 0, X_1\approx 1, X_2 \approx e^{(\alpha_2 \psi \log
  \alpha)/\alpha_1}/\alpha_2\approx \alpha^{c_2\psi/c_1 -1}/c_2,
X_3=0$.
In particular, establishment of $X_2$ happened. Then, $\underline X$
can be approximated by $X_0=X_3=0$,
$$ dX_2 = (\alpha_2 - \alpha_1) X_1 X_2 dt, \qquad dX_1 = -dX_2.$$
In words, $X_2$ grows logistically at speed $(\alpha_2 - \alpha_1)$
and $X_1 = 1-X_2$.  The time it takes $X_2$ to reach some
$\varepsilon>0$ is thus
\begin{align}
  \label{eq:t2}
  t_2 = t_1 + \frac{1}{c_2-c_1} \Big(1 - \frac{c_2\psi}{c_1}\Big)\frac{\log\alpha}{\alpha}.
\end{align}
Once $X_2$ reaches some $\varepsilon>0$, a recombination between
$1\equiv Ab$ and $2\equiv aB$ can occur. It is crucial to note that if
a recombinant arises by some time $t$, its chance to survive depends
on $X_1$ and $X_2$. {\color{black} Switching back to a finite
  population of size $N$ for a moment, we denote}  by
$S_t/N$ the chance that a recombinant at time $t$
survives. {\color{black} Then,} the chance that a surviving recombinant
arises is {\color{black} approximately},
{\color{black} 
  \begin{align}\label{eq:rhoI}
    & 1 - \exp\Big( - \frac \rho N  \int N^2 X_1(t) X_2(t) \frac{S_t}{N} dt\Big) = 
      1 - \exp\Big( - \rho I\Big)
      \intertext{with}
    & I = \int X_1(t) X_2(t) S_t dt.
  \end{align}}
Moreover, as can be computed from Theorem~\ref{T:tibd} and
Proposition~\ref{P:49} (recall $\alpha_3\equiv \alpha$),
\begin{align*}
  S_t = 2\frac{(\alpha - \alpha_1)(\alpha - \alpha_2)}{(\alpha - \alpha_1)X_2(t) + (\alpha -
  \alpha_2)X_1(t)}.
\end{align*}
Plugging this into the last display shows that, using
$dX_2 = (\alpha_2 - \alpha_1) X_1 X_2 dt$,
\begin{align*}
  I & = \int X_1(t) X_2(t) S_t dt 
  \\ & = 2\frac{(\alpha - \alpha_1)
       (\alpha - \alpha_2)}{\alpha_2 - \alpha_1} \int \big((\alpha - \alpha_1)x + (\alpha -
       \alpha_2)(1-x)\big)^{-1} dx 
  \\ & = 2\frac{(\alpha - \alpha_1)
       (\alpha - \alpha_2)}{(\alpha_2 - \alpha_1)^2}\log\frac{\alpha - \alpha_1}{\alpha 
       - \alpha_2}.
\end{align*}
Therefore,
\begin{align*}
  & \mathbb P_{\underline x_{\delta,\psi}} (X_3(\infty) = 1) 
    \approx 2\alpha \delta c_2 \big(1 - e^{-\rho I}\big)
    \approx 2\alpha \delta c_2 \Big(1 - 
    \Big(\frac{1 - c_1}{1 - c_2}\Big)^{-2\rho \frac{(1 - c_1)
    (1 - c_2)}{(c_2 - c_1)^2}}\Big),
\end{align*}
and we have shown \eqref{finalfixresult} from
Theorem~\ref{theoremfixprob2} heuristically.

\begin{remark}[Bounds]
  We note that some bounds on the fixation probability can be
  established heuristically as well. Using that
  \begin{align*}
    I & \approx 2\frac{(1-c_2)(1-c_1)\log\big(\frac{1-c_1}{1-c_2}\big)}{(c_2-c_1)^2} \leq 
        2\frac{(1-c_2)(1-c_1)\big(\frac{1-c_1}{1-c_2}-1\big)}{(c_2-c_1)^2} = 2\frac{1-c_1}{c_2-c_1},\\
    I & \approx - 2\frac{(1-c_2)(1-c_1)\log\big(\frac{1-c_2}{1-c_1}\big)}{(c_2-c_1)^2} \geq
        - 2\frac{(1-c_2)(1-c_1)\big(\frac{1-c_2}{1-c_1}-1\big)}{(c_2-c_1)^2} = 2\frac{1-c_2}{c_2-c_1},
  \end{align*}
  we obtain
  \begin{align*}
    2\alpha\delta c_2\Big( 1 - \exp\Big(
    - 2\rho\frac{1-c_2}{c_2 - c_1}\Big)\Big)  \lesssim
    \mathbb P_{\underline x_{\delta,\psi}} (X_3(\infty) = 1) 
    \lesssim
    2\alpha\delta c_2\Big( 1 - \exp\Big(
    - 2\rho\frac{1-c_1}{c_2 - c_1}\Big) \Big).
  \end{align*}
  This also follows from
  $2(\alpha - \alpha_2) \leq S_t\leq 2(\alpha-\alpha_1)$, which
  holds since the recombinant has at least to survive against type~1
  and at most against type~2. Plugging the upper bound into
  \eqref{eq:rhoI}, we obtain again
  \begin{align*}
    \mathbb P_{\underline x_{\delta,\psi}} (X_3(\infty) = 1) 
    & \approx 2\alpha\delta c_2\Big(1 - \exp\Big( 
      - \rho \int X_1(t) X_2(t) S_t dt\Big)\Big) 
    \\ & \leq 2\alpha\delta c_2\Big(1 - \exp\Big( - 2\rho\frac{\alpha - \alpha_1}{\alpha_2 - \alpha_1} 
         \int dX_2\Big) \Big) 
    \\ & 
         \approx 2\alpha\delta c_2\Big(1 - \exp\Big( - 2\rho\frac{1-c_1}{c_2 - c_1}\Big)\Big)
  \end{align*}
  and a similar relation holds for the lower bound.
\end{remark}

\subsubsection*{...on the fixation time} {\color{black} Here, we are in
  the case $\psi < c_1/c_2$.} For developing a heuristics on the
fixation time (see Theorem~\ref{T:fixTime}), we rescale time by the
factor $\alpha/(\log\alpha)$ for the moment, leading to a time-scale
$d\tau = \frac{\alpha}{\log\alpha}dt$. We have already seen above
(see~\eqref{eq:t1}) that $1\equiv Ab$ hits frequency $\varepsilon$ by
time
$$\tau_1 \approx \frac{\psi}{c_1}$$ 
and that type~3 -- it if arises -- arises (see~\eqref{eq:t2}) by time
$$ \tau_2 = \tau_1 + \frac{1}{c_2-c_1} \Big(1 - \frac{c_2\psi}{c_1}\Big).$$
Since the phase where type~$2\equiv aB$ outcompetes type~$1\equiv aB$
only takes $O(1/\alpha)$, we find that at time $\tau_2+$ that
$X_1(\tau_2+)\approx \varepsilon, X_2(\tau_2+)\approx 1-\varepsilon$
for some small $\varepsilon>0$. So, the successful type $3\equiv AB$
has to compete against type $2\equiv aB$, all other types being of
small frequency. The fixation time of type $3\equiv AB$ is thus given
by the classical result for a beneficial allele, which is by time
$$ \tau_3 = \tau_2 + \frac{2}{1-c_2}.$$
In total, we find that{\color{black}, conditional on fixation of
  $3\equiv AB$,}
$$\tau_3 \approx \frac{\psi}{c_1} + \frac{c_1 - c_2\psi}{c_1(c_2-c_1)} + \frac{2}{1-c_2} 
= \frac{1-\psi}{c_2-c_1} + \frac{2}{1-c_2}.$$

\subsubsection*{...on the genetic footprint of the doubly beneficial
  mutant}
Detecting selection from a population sample is a formidable task and
has benefitted from new methods in the last two decades. Such methods
are based on polymorphism data, as reviewed e.g.\ by
\cite{Stephan2016}. Frequently, detecting strong selection is based on
the hitchhiking effect, i.e.\ the reduced neutral genetic diversity
around a beneficial locus at or near the time of its fixation. In the
case of competing sweeps, simulation results from
\cite{KimStephan2003} and \cite{chevinetal2008} report a {\it reduced
  hitchhiking effect} in the case of competing sweeps (relative to the
scenario of a single beneficial allele rising to fixation). This
indicates that the reduction in neutral genetic diversity, caused by
the fast fixation process of the beneficial alleles, is weaker for
competing sweeps. In addition, \cite{chevinetal2008} report an {\it
  increased number of intermediate frequency variants}.

Although we will not contribute to a quantitative understanding of
genetic patterns under competing sweeps in this manuscript, we will
add some ideas how the above findings can be understood. For this,
recall from the heuristics above that type~$3\equiv AB$ only arises
during the time when type~$2\equiv aB$ takes over a population of
mostly type-$1\equiv Ab$-individuals. It is possible that several
type~3 recombinants occuring during this time contribute to fixation
of type~3. Since different recombinants will have different cross-over
points between the $A$- and $B$-locus, the case of multiple
recombinants leads to a haplotype structure between the two selective
loci. Since recombinants arise at nearly the same time, they will rise
in frequency similarly, leading to (i) an increase in intermediate
frequency variants due to the haplotype structure and consequently
(ii) a reduction in the hitchhiking effect.

\section{The Ancestral Selection Recombination Graph}
\label{S:ASRG}
For computing moments of a diffusion such as \eqref{eq:diffsystem}, a
genealogical picture can be extremely helpful (see
e.g. \citealp{Mano2009,PokPfa2012,alkemper2007graphical,EtheridgeEtAl2006}). Since
we study here a scenario with random reproduction, recombination and
selection, all these forces have to be taken into account in the
genealogical picture.

For populations which evolve under selection, a general graph
construction called the ancestral selection graph (ASG), was
introduced by \citeauthor{NeuKrone1997}. It contains all information
about the ancestry of a population sample. The basic idea is best
explained by means of the graphical representation of the Moran
model. When the ancestry of a sample of lines is examined backwards in
time, selective events occur at certain time points. As long as we do
not know the allelic type of the individuals initiating this event, we
cannot decide if the selection event has taken place or not. This
requires information about the allelic types taking part in this
event. To handle this difficulty both potential ancestors are traced
back. In doing so, all possible ancestors of a considered sample are
included in the ASG. Later on when the types in the past are
determined we can decide about the real ancestor of the considered
line.

Related arguments where used for a situation with recombination by
\cite{GrifMarj1997}, which introduced the ancestral recombination
graph. When the ancestry of two loci is considered simultaneously, a
recombination event between the two loci leads to the situation that
the alleles originate from different individuals. Hence to handle the
complete ancestry both ancestors must be traced back.

Both selection and recombination events (viewed backwards in time)
lead to the necessity to split ancestral lines in order to identify
the correct genealogy. In particular, it is possible to combine these
two graphs. This link was done in a very general way by
\cite{DonnellyKurtz1999}, who construct an {\it ancestral influence
  graph} using the lookdown process, a particle representation of
\eqref{eq:diffsystem}; see also \citep{LessardKermany2012}. As a
result, one obtains the ancestral selection recombination graph
(ASRG). Before we start, we provide in the next subsection a
computational tool for the fixation probability which is based on the
Ancestral Selection Recombination Graph.

\subsection{A Markov jump process and the fixation probability}
\noindent
For the proofs of Theorems~\ref{theoremfixprob2} and~\ref{T:fixTime},
we will have to translate the ASRG into a Markov jump process and a
duality relation with the SDE \eqref{eq:diffsystem}.  The process
$\mathcal L$ as defined below arises as a time-reversed version of the
Ancestral Selection Recombination Graph in equilibrium for large
$\alpha$; see the next Subsection for more details. Again, indices
$0,1,2,3$ will correspond to the types in the SDE.

\begin{definition}[The Markov jump process $\mathcal L$\label{def:L0}]
  We define the Markov jump process
  $\mathcal L = (\underline L(t))_{t\geq 0}$ with
  $\underline L(t) = (L_0(t), L_1(t), L_2(t), L_3(t))$: Starting in
  $\underline L(0)$ with
  $L_2(0)=1, L_3(0)=0, L_1(0) \sim \text{Poi}(2 \alpha^{1-\psi}),
  L_0(0) \sim \text{Poi}(2\alpha(1-\alpha^{-\psi}))$,
  the dynamics is as follows: If
  $\underline L(t) = \underline \ell := (\ell_0, \ell_1, \ell_2,
  \ell_3)$,
  jumps occur to (note that
  $\underline e_i = (\delta_{ij})_{j=0,1,2,3}$ and setting $c_0:=0$)
  \begin{align*}
    & \underline \ell + \underline e_i \text{ at rate } \alpha \ell_i, && i=0,...,3,
    \\ 
    & \underline \ell - \underline e_i \text{ at rate } \binom{\ell_i}{2} 
      + \frac 12 \sum_{j\neq i} \ell_i \ell_j(1-c_i+c_j), && i=0,...,3,\\
    & \underline \ell + \underline e_i - \underline e_1 - \underline e_2 
      \text{ at rate } \frac 12 \ell_1 \ell_2 \frac{\rho}{\alpha}, && i=0,3,
    \\
    & \underline \ell + \underline e_i - \underline e_0 - \underline e_3 
      \text{ at rate } \frac 12 \ell_0 \ell_3 \frac{\rho}{\alpha}, &&  i=1,2.
  \end{align*}
\end{definition}

\begin{remark}[$\mathcal L$ as a chemical reaction network\label{rem:chem}]
  Actually, the Markov-jump process can be seen as a special form of
  chemical reaction network with mass-action dynamics. Precisely, this
  network is given through the equations
  \begin{align*}
    A_i & \xrightarrow{\alpha} 2A_i, && i=0,1,2,3\\
    A_i + A_j & \xrightarrow{(1 - c_i + c_j )/2} A_j, && i,j=0,1,2,3\\
    A_0 + A_3 & \xrightarrow{\rho/(2\alpha)} A_i, && i=1,2 \\
    A_1 + A_2 & \xrightarrow{\rho/(2\alpha)} A_i, && i=0,3,
  \end{align*}
  where $L_i$ is the number of molecules of species $A_i$,
  $i=0,...,3$. Note that the first reaction looks like binary
  branching dynamics for all species, the second is a special form of
  resampling, and the remaining equations come from recombination
  events within the ASRG.
\end{remark}

\noindent
The next Proposition is fundamental in the proof of
Theorem~\ref{theoremfixprob2}. Its proof is found in
Subsection~\ref{sub:fixProbL}.

\begin{proposition}[Eventual fixation and
  $\mathcal L$\label{P:fixProbL}]
  Let $\mathcal L$ be as in Definition~\ref{def:L0}, $\mathcal X$ as
  in Theorem~\ref{theoremfixprob2} and $S$ as in
  Theorem~\ref{T:fixTime}.
  \begin{enumerate}
  \item Then,
    \begin{equation}\label{pfinalfixresult1}
      \lim_{\alpha \to \infty} 
      \frac{1}{2\alpha}\lim_{\delta \to 0} \frac{1}{\delta}  
      \mathbb{P}_{\underline{x}_{\delta,\psi}}
      (X_{3}(\infty)=1) = \lim_{\alpha\to\infty} \mathbb P(L_j(\infty) = 0, j\neq 3).
    \end{equation}
  \item For $\tau>0$,
    \begin{equation}\label{pfinalfixresult2}
      \lim_{\alpha \to \infty} 
      \frac{1}{2\alpha}\lim_{\delta \to 0} \frac{1}{\delta}  
      \mathbb{P}_{\underline{x}_{\delta,\psi}}
      \Big(\frac{\alpha}{\log\alpha}S < \tau\Big) = \lim_{\alpha\to\infty} 
      \mathbb P\Big(L_j\Big(\tau \frac{\log\alpha}{\alpha}\Big)=0, j\neq 3\Big).
    \end{equation}
  \end{enumerate}
\end{proposition}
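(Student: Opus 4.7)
The plan is to relate $\mathcal{X}$ to $\mathcal{L}$ via a combination of a moment duality and a Poisson thinning, and to translate the fixation events across this correspondence.

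For part~1, the starting point is a moment duality with sampling function $H(\underline{x},\underline{\ell}) = \prod_i x_i^{\ell_i}$: a generator calculation for~\eqref{eq:diffsystem}, using the Neuhauser--Krone construction (every ancestral line branches at the common maximal rate $\alpha$, with selection absorbed into the coalescence weights $1 - c_i + c_j$) together with the Griffiths--Marjoram recombination events at rate $\rho/\alpha$ per pair, shows that the dual jump process has precisely the rates of Definition~\ref{def:L0}, giving
\begin{equation*}
\mathbb{E}_{\underline{x}}\bigl[H(\underline{X}(t), \underline{\ell})\bigr] = \mathbb{E}_{\underline{\ell}}\bigl[H(\underline{x}, \underline{L}(t))\bigr].
\end{equation*}
Averaging this identity over independent $\ell_i \sim \text{Poi}(\mu_i)$ and using $\mathbb{E}_{\text{Poi}(\mu)}[x^N] = e^{\mu(x-1)}$ yields
\begin{equation*}
\mathbb{E}_{\underline{x}}\bigl[\exp\bigl(\textstyle\sum_i \mu_i(X_i(t) - 1)\bigr)\bigr] = \mathbb{E}\bigl[H(\underline{x},\underline{L}(t))\bigr]
\end{equation*}
with $L_i(0) \sim \text{Poi}(\mu_i)$ independent. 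Choosing $\mu_i = 2\alpha X_i(0)$ recovers (up to factors of $c$ that are absorbed in the limit) the Poisson initial condition of $\mathcal{L}$ for $i = 0, 1$ and $\mu_2 = 2\alpha\delta$, $\mu_3 = 0$ for $i = 2, 3$. Letting $t \to \infty$ and expanding in $\delta$, the leading-order contribution of order $\delta$ on the right-hand side comes from the single initial type-2 lineage, $L_2(0) = 1$, whose Poisson mass equals $2\alpha\delta e^{-2\alpha\delta} = 2\alpha\delta + o(\delta)$. On the left, the exponent concentrates on the absorbing face of the simplex; dividing by $2\alpha\delta$ and letting $\alpha\to\infty$ extracts $\mathbb{P}_{\underline{x}_{\delta,\psi}}(X_3(\infty) = 1)/(2\alpha\delta)$, while on the right the only event on which $H(\underline{x}_{\delta,\psi}, \underline{L}(\infty))$ is not suppressed by a vanishing power of $\delta$ or $c\alpha^{-\psi}$ is $\{L_j(\infty) = 0, j \neq 3\}$, giving \eqref{pfinalfixresult1}.

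For part~2 the same derivation works at finite time $t = \tau\log\alpha/\alpha$ instead of $t = \infty$. The forward event $\{S < \tau\log\alpha/\alpha\}$ coincides with $\{X_3(\tau\log\alpha/\alpha) = 1\}$ up to a negligible correction, since the final climb of $X_3$ from $1 - \varepsilon$ to $1$ takes time $O(1/\alpha) = o(\log\alpha/\alpha)$; on the dual side the matching event is $\{L_j(\tau\log\alpha/\alpha) = 0, j\neq 3\}$, yielding \eqref{pfinalfixresult2}.

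The main obstacle is to justify the limit interchanges rigorously. The Poissonization is exact, but the $\delta \to 0$ expansion requires uniform control over the higher-order terms, and the $\alpha \to \infty$ limit inside the $t \to \infty$ (resp.\ $t = \tau\log\alpha/\alpha$) limit requires uniform tightness of $\underline{L}(t)$ on the relevant time scale; in particular, controlling the factor $(1 - \delta - c\alpha^{-\psi})^{L_0(t)}$ in $H$ requires moment bounds on the large $\text{Poi}(2\alpha)$-distributed component $L_0$, since otherwise this factor can easily carry an exponent of order $\alpha$. Standard estimates from the branching-process and ASRG literature (and the auxiliary results of the forthcoming section) should suffice.
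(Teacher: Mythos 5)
There is a genuine gap, and it sits at the heart of your argument: the claimed moment duality. A generator computation of the diffusion \eqref{eq:diffsystem} against $H(\underline x,\underline\ell)=\prod_i x_i^{\ell_i}$ does \emph{not} produce the rates of Definition~\ref{def:L0}. The recombination transitions of $\mathcal L$ are pairwise events, $\underline\ell+\underline e_i-\underline e_1-\underline e_2$ at rate $\tfrac12\ell_1\ell_2\rho/\alpha$, i.e.\ quadratic in $\underline\ell$ with the small factor $\rho/\alpha$; acting with the drift $\rho(x_1x_2-x_0x_3)$ on $H$ instead produces terms of order $\rho\,\ell_i$ (linear in $\ell$, coefficient $\rho$, e.g.\ $\rho\ell_0\big(H(\underline x,\underline\ell-\underline e_0+\underline e_1+\underline e_2)-H(\underline x,\underline\ell+\underline e_3)\big)$), which moreover cannot be rearranged into nonnegative jump rates without additional structure. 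The quadratic rate $\tfrac12\ell_1\ell_2\,\rho/\alpha$ is an artifact of \emph{time-reversing the ASRG at stationarity}: the line-counting process \eqref{linecountrates} has the reversible Poisson$(2(\alpha+\rho))$ equilibrium $\Pi$ (Proposition~\ref{P:fix}), so backwards branching becomes forwards pairwise coalescence at rate one per pair, each coalescence carrying a recombination label with probability $\rho/(\alpha+\rho)$. This is exactly the paper's route: the graph-based sampling duality of Proposition~\ref{P:duality}, the identification of fixation with a monotype sample of size $\Pi$ (Proposition~\ref{P:fix}), reversibility giving the type-resolved process $\widetilde{\mathcal L}$ (Proposition~\ref{P:fixL3}), and only then a comparison of $\widetilde{\mathcal L}$ with $\mathcal L$ (initial condition via Lemma~\ref{l:tv} and the factor $\mathbb P(\widetilde L_2(0)>0)\approx 2\alpha\delta$; dynamics by keeping leading-order rates). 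Even the exact reversed process is $\widetilde{\mathcal L}$, whose rates differ from those of $\mathcal L$ by $O(\rho/\alpha)$ corrections, so ``precisely the rates of Definition~\ref{def:L0}'' skips an approximation step that has to be argued.

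The extraction step is also flawed even if one grants some Poissonized identity. Since $x_3=0$ in $\underline x_{\delta,\psi}$, the product $\prod_i x_i^{L_i(t)}$ vanishes identically on the very event you want to isolate, $\{L_j(t)=0,\ j\neq 3,\ L_3(t)>0\}$, so that event contributes nothing to your right-hand side; and on the left-hand side the contribution of $\{X_3(\infty)=1\}$ enters with weight $e^{-2\alpha(1-x_3(0))}=e^{-2\alpha}$, exponentially smaller than the competing terms (e.g.\ the $X_0$-fixation term of weight $e^{-2\alpha(\delta+c\alpha^{-\psi})}$), so ``expanding in $\delta$'' cannot isolate the fixation probability. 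In the paper the mechanism is different: fixation of type~3 is the event that all $\Pi$ exchangeable samples $J_1,\dots,J_\Pi$ are of type~3, which after reversal is the event that the type-resolved line process has no lines of types $\neq 3$ left; the factor $2\alpha\delta$ comes from conditioning on $\widetilde L_2(0)>0$, not from a series expansion of a Laplace functional. Finally, for part~2 your remark about the final climb of $X_3$ does not address the real difficulty, which is on the dual side: $\mathbb P(X_3(t)=1)$ corresponds to an \emph{infinite} exchangeable sample, while the reversed representation starts from $\Pi$ lines; the paper needs monotonicity for the ``$\leq$'' direction and, for ``$\geq$'', a coming-down-from-infinity argument (fewer than $\alpha$ lines after time $O(1/\alpha)$), exchangeability, and an extra $\varepsilon\frac{\log\alpha}{\alpha}$ time shift. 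None of this is covered by the proposal.
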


\subsection{Construction of the ASRG and duality}
We will define the ASRG first, and then obtain some basic results and
the precise connection to \eqref{eq:diffsystem}. Then, we give the
connection to the jump process from Definition~\ref{def:L0}.

In order to distinguish between the {\it forwards time} $t$ and the
genealogical time, the {\it backwards (genealogical) time} $\beta$ is
introduced; see Figure~\ref{fig:split} for an illustration.

\begin{figure}
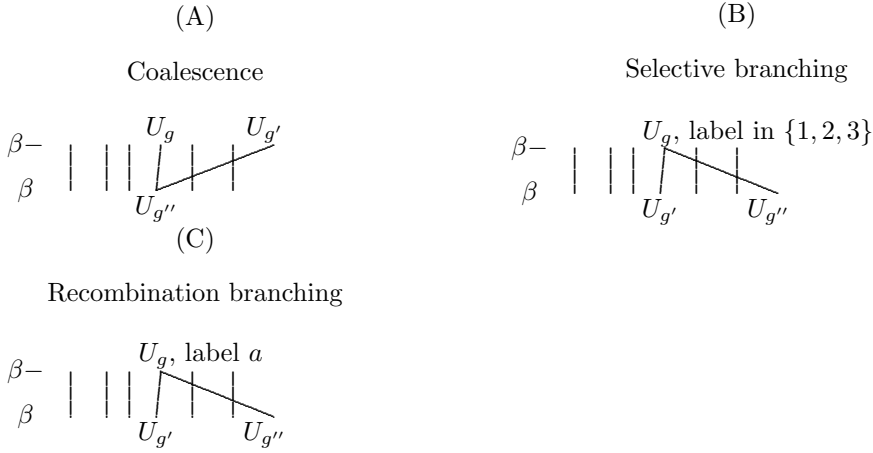

  \parbox{5cm}{\centering{(A)}\\[2ex]Coalescence\\[3ex]
    \beginpicture
    \setcoordinatesystem units <.6cm,.6cm>
    \setplotarea x from 0 to 6, y from -.5 to 1.5
    \put {$\beta-$} [cC] at 0 1
    \put {$\beta$} [cC] at 0 0
    \put {$U_{g}$} [cC] at 3 1.35
    \put {$U_{g'}$} [cC] at 5.3 1.35
    \put {$U_{g''}$} [cC] at 2.9 -.4
    \plot 1 0 1 1 /
    \plot 1.8 0 1.8 1 /
    \plot 2.3 0 2.3 1 /
    \plot 3.7 0 3.7 1 /
    \plot 4.6 0 4.6 1 /
    \plot 2.9 0 5.5 1 /
    \plot 2.9 0 3 1 /
    \endpicture}
  \parbox{6cm}{\centering{(B)}\\[2ex]Selective branching\\[3ex]
    \beginpicture
    \setcoordinatesystem units <.6cm,.6cm>
    \setplotarea x from 0 to 7, y from -.5 to 1.5
    \put {$\beta-$} [cC] at 0 1
    \put {$\beta$} [cC] at 0 0
    \put {$U_{g}$, label in $\{1,2,3\}$} [lC] at 2.5 1.35
    \put {$U_{g''}$} [cC] at 5.3 -.4
    \put {$U_{g'}$} [cC] at 2.9 -.4
    \plot 1 0 1 1 /
    \plot 1.8 0 1.8 1 /
    \plot 2.3 0 2.3 1 /
    \plot 3.7 0 3.7 1 /
    \plot 4.6 0 4.6 1 /
    \plot 3 1 5.5 0 /
    \plot 2.9 0 3 1 /
    \endpicture}
  \parbox{5cm}{\centering{(C)}\\[2ex]Recombination branching\\[3ex]
    \beginpicture
    \setcoordinatesystem units <.6cm,.6cm>
    \setplotarea x from 0 to 6, y from -.5 to 1.5
    \put {$\beta-$} [cC] at 0 1
    \put {$\beta$} [cC] at 0 0
    \put {$U_{g}$, label $a$} [lC] at 2.5 1.35
    \put {$U_{g''}$} [cC] at 5.3 -.4
    \put {$U_{g'}$} [cC] at 2.9 -.4
    \plot 1 0 1 1 /
    \plot 1.8 0 1.8 1 /
    \plot 2.3 0 2.3 1 /
    \plot 3.7 0 3.7 1 /
    \plot 4.6 0 4.6 1 /
    \plot 3 1 5.5 0 /
    \plot 2.9 0 3 1 /
    \endpicture}
  \caption{\label{fig:split} If a coalescing event (A), a selective
    branching event (B) or a recombination branching event (C) occurs
    by time $\beta$, we connect the lines within the ASRG according to
    the rules as given in Definition~\ref{def:ASRG2}. In all cases,
    $U_{g}$ is uniformly distributed on $[0,1]$, and updated upon any
    event for the affected lines. For branching event, labels in
    $\{1,2,3,a\}$ indicate which kind of event happens.}
\end{figure}

\begin{definition}[ASRG] \label{def:ASRG2}
  \begin{enumerate}
  \item For $k\in\mathbb N$, we define a particle model
    $\mathcal A = \mathcal A^k = (\mathcal A^k_\beta)_{\beta\geq 0}$,
    which takes values in
    $$ E := \bigcup_{k=1}^\infty E_k, \qquad E_k:=\{A \subset [0,1]: \#A = k\},$$
    the set of finite subsets of $[0,1]$ and $A^k_0 \in E_k$. Elements
    of $\mathcal A^k_\beta$ are called particles. We set
    $\mathcal A_0^k = \{U_1,...,U_k\} \in E_k$ for some (on $[0,1]$)
    uniformly distributed and independent random variables
    $U_i, i=1,...,k$. The dynamics of $\mathcal A^k$ is as follows:
    \begin{enumerate}
    \item Coalescence: Every (unordered) pair of particles is replaced
      at rate~1 by a particle with a label that is uniformly
      distributed on $[0,1]$ and independent of everything else.
    \item Branching: Every particle is replaced at rate $\alpha+\rho$
      by two particles, with labels that are uniformly distributed on
      $[0,1]$ and independent of everything else. 
    \end{enumerate}
    Along the path of $\mathcal A^k$, we mark each branching event
    with a label in $\{1,2,3,a\}$, namely
    \begin{equation}\label{labelprob}
      \begin{aligned}
        1 & \quad\text{with probability} \quad \frac{\alpha_{1}}{\alpha+\rho} \\
        2 & \quad\text{with probability}  \quad \frac{\alpha_{2}-\alpha_{1}}{\alpha+\rho} \\
        3 & \quad\text{with probability}  \quad \frac{\alpha-\alpha_{2}}{\alpha+\rho} \\
        a & \quad \text{with probability} \quad
        \frac{\rho}{\alpha+\rho}.
      \end{aligned}
    \end{equation}
    The branching events with a label in $\{1,2,3\}$ are called {\it
      selective (branching) events} and the events with the letter $a$
    are called {\it recombination (branching) events}. For a selective
    branching event, the branching particle will be denoted the {\it
      outgoing} particle and we mark one new particle as the {\it
      incoming}, the other one as the {\it continuing particle}. For a
    recombination event, mark one new particle as the {\it
      $A/a$-particle}, the other one as the {\it $B/b$-particle}.  We
    denote the particle system which includes all these marks by
    $\mathcal B^k$, which we refer to by the {\it Ancestral Selection
      Recombination Graph} or {\it ASRG} for short.
  \item For the particle system
    $\mathcal B^k = (\mathcal B_\beta^k)_{\beta\geq 0}$, consider the
    line-counting process $(K^k_\beta)_{\beta\geq 0}$, which starts in
    $K^k_0=k$ and jumps from $k$ to
    \begin{equation}\label{linecountrates}
      \begin{aligned}
        k-1 & \quad \text{at rate} \quad  q_{k,k-1}:=\binom{k}{2} \\
        k+1 & \quad \text{at rate} \quad q_{k,k+1}:=(\alpha+\rho)k.
      \end{aligned}
    \end{equation}
    {\color{black}Note that we frequently refer to trajectories of
      particles as ancestral lines, therefore we call
      $(K^k_\beta)_{\beta\geq 0}$ a {\it line}-counting process rather
      than a {\it particle}-counting process.}
  \item Let $\underline x := (x_0,x_1,x_2,x_3) \in \Sigma_3$, the
    three-dimensional simplex and $\tau>0$. Based on the ASRG
    $\mathcal B^k$, we define random variables
    $J_1(\tau),...,J_k(\tau) \in \{0,1,2,3\}$. (Recall that time for
    the ASRG is running {\it backward}.) First, color all particles in
    $\mathcal B^{k}_\tau$ independently with color $i$ with
    probability $x_i$, $i=0,1,2,3$. From here on, work {\it forwards}
    through the graph, such that types are inherited along coalescence
    events; see Figure \ref{fig:split1}(A).  Upon a selective
    branching event, do the following: If the label of the selective
    branching event is $i$, the incoming line is of type $j$, and the
    continuing line of of type $k$, the outgoing line is of type (see
    Figure~\ref{fig:split1}(B))
    \begin{align}\label{eq:splitSel}
      1_{j\geq i} j + 1_{j<i} k.
    \end{align}
    (This means that a type with a number higher than the mark is
    inherited along the incoming line, but if this is not the case,
    the type of the continuing line is inherited.) Upon a
    recombination branching event, do the following: the $A/a$-locus
    is inherited along the $A/a$-particle, the $B/b$-locus is
    inherited along the $B/b$-particle. Precisely, use Table
    \ref{tab2} for the types; see also Figure~\ref{fig:split1}(C). By
    this procedure, we obtain
    $J_1(\tau),...,J_k(\tau) \in \{0,1,2,3\}$, which are the types of
    the $k$ initial particles of $\mathcal B^k$ at time $\beta=0$. We
    denote the distribution of $(J_i(\tau))_{i,\tau}$ by
    $\mathbb P_{\underline x}$.
  \end{enumerate}
\end{definition}

\begin{table}
  \centering
  \begin{tabular}{lll}
    $A/a$-particle & $B/b$-particle & outgoing particle \\
    0 or 2 & 2 or 3 & 2 \\
    0 or 2 & 0 or 1 & 0 \\
    1 or 3 & 2 or 3 & 3 \\
    1 or 3 & 0 or 1 & 1 \\
  \end{tabular}
  \caption{\label{tab2} Lookup Table for recombination branching events. The $A/a$-locus is inherited along 
    the $A/a$-particle, the $B/b$-locus along the $B/b$-particle.}
\end{table}

\begin{figure}
  \parbox{5cm}{\centering{(A)}\\[2ex]Coalescence\\[3ex]
    \beginpicture
    \setcoordinatesystem units <.6cm,.6cm>
    \setplotarea x from 0 to 6, y from 1.5 to 1.5
    \put {$i$} [cC] at 3 1.35
    \put {$i$} [cC] at 5.3 1.35
    \put {$i$} [cC] at 2.9 -.9
    \plot 2.9 0 5.5 1 /
    \plot 2.9 0 3 1 /
    \plot 2.9 0 2.9 -0.4 /
    \endpicture}
  \parbox{6cm}{\centering{(B)}\\[2ex]Selective branching\\[3ex]
    \beginpicture
    \setcoordinatesystem units <.6cm,.6cm>
    \setplotarea x from 0 to 5, y from -.5 to 1.5
    \put {$1_{j\geq i}j + 1_{j<i}k$} [cC] at 3 2
    \put {$i$} [cC] at 3.3 1.3
    \put {$j$} [cC] at 5.3 -.4
    \put {$k$} [cC] at 2.9 -.4
    \put {incoming} [cC] at 5.6 .7
    \put {continuing} [cC] at 1.3 .4
    \plot 3 1 5.5 0 /
    \plot 3 1 2.9 0 /
    \plot 3 1 3 1.7 /
    \endpicture}
  \parbox{5cm}{\centering{(C)}\\[2ex]Recombination branching\\[3ex]
    \beginpicture
    \setcoordinatesystem units <.6cm,.6cm>
    \setplotarea x from 0 to 7, y from -.5 to 1.5
    \put {see Table~\ref{tab2}} [cC] at 3 2.1
    \put {$a$} [cC] at 3.3 1.3
    \put {$j$} [cC] at 5.3 -.4
    \put {$k$} [cC] at 2.9 -.4
    \put {$B/b$-line} [cC] at 5.5 .7
    \put {$A/a$-line} [cC] at 1.3 .4
    \plot 3 1 5.5 0 /
    \plot 3 1 2.9 0 /
    \plot 3 1 3 1.7 /
    \endpicture}
  \caption{\label{fig:split1} If a coalescing event (A), a selective
    branching event (B) or a recombination branching event (C) occurs
    by time $\beta$, we connect the lines within the ASRG according to
    the rules as given in Definition~\ref{def:ASRG2}. In all cases,
    $U_{g}$ is uniformly distributed on $[0,1]$, and updated upon any
    event for the affected lines. For branching event, labels in
    $\{1,2,3,a\}$ indicate which kind of event happens.}
\end{figure}

\begin{proposition}[Duality\label{P:duality}]
  Let $\mathcal X$ be as in \eqref{eq:diffsystem}, and
  $J_1(\tau),...,J_k(\tau)$ as in Definition~\ref{def:ASRG2}.3 for
  $\tau>0$. Then, the duality relation
  \begin{align}\label{eq:duality}
    \mathbb E_{\underline x}\Big[\prod_{i=1}^k X_{j_i}(\tau)\Big] = \mathbb P_{\underline x}[J_1(\tau) 
    = j_1,...,J_k(\tau) = j_k], \qquad 
    j_1,...,j_k \in \{0,1,2,3\}
  \end{align}
  holds.
\end{proposition}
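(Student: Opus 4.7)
The plan is to prove (\ref{eq:duality}) by matching infinitesimal generators. Define the duality function $H(\underline x, \underline j) := \prod_{\ell=1}^k x_{j_\ell}$ on $\Sigma_3 \times \{0,1,2,3\}^k$, so that at $\tau = 0$ both sides of (\ref{eq:duality}) coincide with $H(\underline x, \underline j)$. Writing $L^X$ for the generator of the SDE (\ref{eq:diffsystem}) acting in the $\underline x$-variable, and $\mathcal G$ for the infinitesimal effect on $\mathbb P_{\underline x}[\underline J(\tau) = \underline j]$ obtained by conditioning on the first event of the ASRG, I would verify the pointwise identity $L^X H(\cdot, \underline j)(\underline x) = \mathcal G H(\underline x, \cdot)(\underline j)$ for every $\underline x$, $\underline j$ and $k$. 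Standard Markov-process duality (e.g., Proposition 4.4.7 in Ethier and Kurtz, 1986) then yields (\ref{eq:duality}) for all $\tau > 0$.

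The verification splits cleanly into three matches, one per class of ASRG event. The Wright-Fisher covariance $a_{ii'}(\underline x) = x_i \delta_{ii'} - x_i x_{i'}$ of $L^X$ contributes exactly $\sum_{\ell<\ell'} \bigl(\mathbf{1}\{j_\ell = j_{\ell'}\}\, H/x_{j_\ell} - H\bigr)$, matching pair coalescence (rate~$1$ per pair); pairs carrying unequal types contribute~$0$ because no leaf coloring is consistent with the implied common ancestry. The selection drift $\sum_i x_i(\alpha_i - \bar\alpha)\partial_i H$, with $\bar\alpha := \sum_m \alpha_m x_m$, is matched by selective branching: a mark-$i$ event on line $\ell$ occurs at rate $\alpha_i - \alpha_{i-1}$, and conditional on such an event the outgoing type equals $j_\ell$ with the probability read off from the threshold rule (\ref{eq:splitSel}). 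The recombination drift $\rho \sum_i \sigma_i \partial_i H$ matches recombination events at rate $\rho$ per line: according to Table~\ref{tab2}, the probability that the recombinant carries outgoing type $j$ is the product of the $A/a$- and $B/b$-marginal weights, which a short calculation identifies with $x_j + \sigma_j(\underline x)$.

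The main obstacle is the selection match, because of the asymmetric threshold in (\ref{eq:splitSel}). One has to split on $j_\ell \geq i$ versus $j_\ell < i$, and within each case consolidate the two sub-configurations $(c^{\mathrm{inc}} = j_\ell,\, c^{\mathrm{cont}} \text{ free})$ and $(c^{\mathrm{inc}} < i,\, c^{\mathrm{cont}} = j_\ell)$ that produce outgoing type $j_\ell$. The algebra is driven by the two telescoping identities $\sum_{i=1}^{3} (\alpha_i - \alpha_{i-1}) \mathbf{1}\{j_\ell \geq i\} = \alpha_{j_\ell}$ and $\sum_{i=1}^{3} (\alpha_i - \alpha_{i-1}) \sum_{m<i} x_m = \alpha - \bar\alpha$, which together give the per-line contribution $\alpha_{j_\ell} - \bar\alpha$ and so reproduce the selection drift exactly. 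By contrast, the recombination match reduces to the algebraic check that, for each row of Table~\ref{tab2}, the product of the two marginal weights minus $x_j$ equals $\sigma_j$, and the coalescence match is immediate from the Wright-Fisher covariance. Combining these three verifications completes the generator duality, and hence (\ref{eq:duality}).
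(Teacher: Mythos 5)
Your argument is correct, but it proves the duality by a genuinely different route than the paper does. The paper's proof is essentially a citation: it invokes the ancestral influence graph of Donnelly and Kurtz (1999, Section~8; see also Fearnhead, 2003), identifies the ASRG of Definition~\ref{def:ASRG2} as a special case, and then reads off \eqref{eq:duality} as a sampling identity -- the left-hand side is the probability that a size-$k$ sample drawn at time $\tau$ shows types $j_1,\dots,j_k$, and the right-hand side computes the same probability by tracing all potential ancestors back to time~$0$ and resolving selective and recombination events forwards. You instead verify the duality by hand at the level of generators, and your bookkeeping checks out: the Wright--Fisher covariance $x_i(\delta_{ii'}-x_{i'})$ (with the factor $\tfrac12$ in the generator) matches pair coalescence at rate~$1$; the telescoping identities $\sum_{i=1}^3(\alpha_i-\alpha_{i-1})\mathbf 1\{j_\ell\ge i\}=\alpha_{j_\ell}$ and $\sum_{i=1}^3(\alpha_i-\alpha_{i-1})\sum_{m<i}x_m=\alpha-\bar\alpha$ do reduce the selective-branching terms, via the threshold rule \eqref{eq:splitSel}, to the per-line drift $\alpha_{j_\ell}-\bar\alpha$ (the label rates $\alpha_1,\alpha_2-\alpha_1,\alpha-\alpha_2$ from \eqref{labelprob} are exactly $\alpha_i-\alpha_{i-1}$ with $\alpha_0=0$, $\alpha_3=\alpha$); and for recombination one indeed has, row by row in Table~\ref{tab2}, e.g.\ $(x_0+x_2)(x_0+x_1)-x_0=x_1x_2-x_0x_3$, so the product of marginal weights equals $x_j+\sigma_j$ and reproduces the drift $\rho\sigma_j$. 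The one point you should spell out when writing this up is the precise formulation of the dual object to which Ethier--Kurtz Proposition~4.4.7 is applied: the type vector $(J_1,\dots,J_k)$ is not itself a backward Markov process, so the ``conditioning on the first event'' step should be phrased either as a duality with the graph-valued (equivalently, function-valued) ASRG, with duality function equal to the propagation polynomial, or as a closed hierarchy of backward equations for the monomials, together with the non-explosion of the line-counting process \eqref{linecountrates} and the integrability hypotheses of the duality theorem (the jump rates grow quadratically in the number of lines). This is standard and unproblematic here, and it is essentially what the Krone--Neuhauser/Donnelly--Kurtz machinery does. The trade-off: the paper's proof is short and delegates all technicalities to the lookdown/AIG construction, while your computation is self-contained and makes transparent why precisely the branching rate $\alpha+\rho$, the label probabilities \eqref{labelprob}, the rule \eqref{eq:splitSel} and Table~\ref{tab2} are the correct dual ingredients for the SDE \eqref{eq:diffsystem}.
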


\begin{proof}
  As shown in \cite[{\color{black}Section~8}]{DonnellyKurtz1999}, an
  ASRG (as a special case of their ancestral influence graph, AIG) is
  the graph of possible ancestors of an infinitely large population,
  whose evolution of frequencies is given by \eqref{eq:diffsystem}.
  {\color{black} See also \cite{Fearnhead2002} for the definition of
    the AIG including selection acting at two linked loci, which is
    all what we need here.}  Note that the left-hand side of
  \eqref{eq:duality} is the probability that in a sample of size $k$,
  taken at time $\tau$ from the population which evolves according
  to~\eqref{eq:diffsystem}, the $i$th pick is of type $j_i$,
  $i=1,...,k$. The right-hand side follows all possible ancestries of
  this sample back until time~0, and then decides forwards in time
  which types are inherited (i.e.\ which selective events take place
  and where the $A/a$- and $B/b$-locus was inherited in a
  recombination event. The right hand side therefore gives the
  probability that the ASRG and the labeling is such that types
  $j_1,...,j_k$ appear in the sample. Since the types of the lines at
  time $\beta = \tau$ in the ASRG are used according to the starting
  distribution $\underline{x}$ (see Definition \ref{def:ASRG2}), the
  result follows.
\end{proof}

\subsection{Fixation and the ASRG}
For computing the fixation probability of type $3\equiv AB$, we will
require the equilibrium, denoted $\Pi$, for the
line-counting process $(K_\beta^k)_{\beta \geq 0}$.

\begin{proposition}[The probability of fixation and the ASRG\label{P:fix}]
  Let $\Pi$ be Poisson-distributed with parameter
  {\color{black}$2(\alpha + \rho)$}, conditioned to be positive, i.e.
  \begin{align}\label{eq:Pi}
    \mathbb P(\Pi = k) = \frac{e^{-2(\alpha+\rho)}}{1-e^{-2(\alpha + \rho)}}\frac{2^k(\alpha + \rho)^k}{k!}.
  \end{align}
  \begin{enumerate}
  \item For all $k$, we have
    $K_\beta^k \xRightarrow{\beta\to\infty}\Pi$, i.e.\ $\Pi$ is the
    unique equilibrium for the line-counting process. This equilibrium
    is reversible.
  \item \sloppy Denote by $(K_\beta^\Pi)_{0\leq \beta\leq \tau}$ the
    line-counting process, started in $\Pi$, and let
    $J_1(\tau),...,J_\Pi(\tau)$ be as in
    Definition~\ref{def:ASRG2}.3. Then,
    \begin{align}\label{eq:fixJ}
      \mathbb P_{\underline x}(X_i(\infty)=1) = \lim_{\tau\to\infty} \mathbb P_{\underline x}(J_1(\tau) = \cdots = 
      J_\Pi(\tau) = i), \qquad i=0,1,2,3.
    \end{align}
  \end{enumerate}
\end{proposition}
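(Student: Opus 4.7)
The plan is to treat the two parts separately. For part~1, I would note that $(K^k_\beta)_{\beta\geq 0}$ is a continuous-time birth--death chain on $\{1,2,\ldots\}$ (it cannot reach $0$ since $q_{1,0}=\binom{1}{2}=0$), with birth rate $(\alpha+\rho)k$ and death rate $\binom{k}{2}$ at state $k$. Solving the detailed balance relation $\pi(k)\,q_{k,k+1}=\pi(k+1)\,q_{k+1,k}$ gives the one-step recursion $\pi(k+1)/\pi(k)=2(\alpha+\rho)/(k+1)$, so $\pi(k)\propto (2(\alpha+\rho))^k/k!$ for $k\geq 1$; normalizing over $k\geq 1$ recovers $\Pi$ as in~\eqref{eq:Pi}. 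Reversibility follows from this detailed balance, and because $\pi$ is summable and the chain is irreducible on $\{1,2,\ldots\}$, positive recurrence together with the standard ergodic theorem for countable-state continuous-time Markov chains yields $K^k_\beta\Rightarrow\Pi$ as $\beta\to\infty$.

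For part~2, the key input is the duality relation of Proposition~\ref{P:duality}, specialized to $j_1=\cdots=j_k=i$:
\begin{equation*}
\mathbb{E}_{\underline x}\bigl[X_i(\tau)^k\bigr]=\mathbb{P}_{\underline x}\bigl(J_1(\tau)=\cdots=J_k(\tau)=i\bigr).
\end{equation*}
Since $\Pi$ is independent of $\mathcal X$ and of the ASRG, conditioning on $\Pi=k$ and summing over $k$ yields
\begin{equation*}
\mathbb{P}_{\underline x}\bigl(J_1(\tau)=\cdots=J_\Pi(\tau)=i\bigr)=\mathbb{E}_{\underline x}\bigl[X_i(\tau)^\Pi\bigr].
\end{equation*}
I would then invoke that the diffusion fixates, i.e.\ $X_i(\tau)\to X_i(\infty)\in\{0,1\}$ almost surely as $\tau\to\infty$; since $\Pi\geq 1$ this gives $X_i(\tau)^\Pi\to\mathbf 1_{\{X_i(\infty)=1\}}$ a.s., and bounded convergence produces $\mathbb{E}_{\underline x}[X_i(\tau)^\Pi]\to\mathbb{P}_{\underline x}(X_i(\infty)=1)$, which is~\eqref{eq:fixJ}.

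The main obstacle I expect is verifying almost-sure fixation of $\mathcal X$: since \eqref{eq:diffsystem} combines selection, recombination and genetic drift, one must check that the only absorbing points of $\mathcal X$ in $\Sigma_3$ are the four vertices, and that $\mathcal X$ reaches one of them almost surely rather than drifting indefinitely along an interior face or edge. This should follow from a standard martingale or Lyapunov argument exploiting nondegeneracy of the resampling noise off the vertices (together with the observation that edges and faces are not invariant under the selection and recombination drifts, except at the vertices), but it is the only step above which does not reduce to a direct computation.
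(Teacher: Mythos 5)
Your proposal is correct, and for part~1 it coincides with the paper's argument (the paper simply cites the birth--death equilibrium from Greven et al., Proposition~3.3, where you solve detailed balance explicitly; both give reversibility and convergence). For part~2, however, you take a genuinely different route. The paper argues genealogically: it restarts the ASRG from its equilibrium number $\Pi$ of lines, uses that over a long backward window the line-counting process hits~$1$ with high probability, and concludes that the whole graph is monotype with the bottleneck particle's type inherited forwards, so that the event $\{J_1(\tau)=\cdots=J_\Pi(\tau)=i\}$ asymptotically coincides with fixation of type~$i$; duality (Proposition~\ref{P:duality}) then closes the argument. You instead specialize the duality to $j_1=\cdots=j_k=i$, mix over the independent $\Pi$ to get $\mathbb P_{\underline x}(J_1(\tau)=\cdots=J_\Pi(\tau)=i)=\mathbb E_{\underline x}\bigl[X_i(\tau)^\Pi\bigr]$, and pass to the limit via almost-sure fixation of $\mathcal X$ and bounded convergence. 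Your route is shorter and more transparent, but it requires as an external ingredient that the four-type diffusion \eqref{eq:diffsystem} is a.s.\ absorbed at a vertex of $\Sigma_3$ -- exactly the point you flag; this is true (the one-locus edges and faces are invariant under recombination but the resampling noise is nondegenerate there, so a standard boundary/martingale argument applies), though it is not a direct computation and the paper never has to supply it, because the bottleneck (``ultimate ancestor'') argument simultaneously yields monotypicality of the sample and its identification with the fixation event. In short: your proof trades the paper's restart-and-common-ancestor construction for an analytic duality-plus-absorption argument, which is cleaner once absorption of the SDE system is established, while the paper's version stays entirely within the ASRG and avoids that separate verification.
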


\begin{proof}
  1. The line-counting process $(K_\beta^k)_{\beta\geq 0}$ is a
  birth-death process with birth rate $(\alpha + \rho)k$ and death
  rate $\binom k 2$. The unique, reversible equilibrium of this
  process is given by \eqref{eq:Pi}; see e.g.\
  \cite{greven2014fixation}, Proposition~3.3. 

  2. Consider some $\tau,\tau'\to\infty$ with
  $\tau' - \tau \to\infty$. Then, if $\mathcal A^k$ starts at time
  $\beta = 0$ with $k$ lines, it has (in the limit of large times)
  $\Pi$ lines at time $\beta = \tau' - \tau$.  From here on, we use a
  restart argument and start the ASRG with $\Pi$ lines at time
  $\tau' - \tau$, which we take as the initial time $\beta = 0$ of the
  restarted ASRG. Since $\tau$ is large, in the ASRG
  $(\mathcal A^\Pi_\beta)_{0\leq \beta \leq \tau}$ we find with high
  probability a time $0\leq \beta'\leq \tau$ with
  $\#\mathcal A^k = 1$. The type of this single particle is inherited
  to the whole ASRG when viewed forwards in time. Therefore, for large
  $\tau$, the ASRG is monotype, and the type is inherited along the
  graph forwards in time, so fixation of type $i$ occurs iff all $\Pi$
  lines of the ASRG at time $0$ carry type $i$. Together with
  Proposition~\ref{P:duality}, we have proven the result.
\end{proof}

Since the equilibrium $\Pi$ is reversible, there is actually another
way to compute the fixation probability. For this, we require another
Markov jump process.

\begin{definition}[The Markov jump process
  $\widetilde{\mathcal L}$\label{def:L}]
  Let $\underline x\in \Sigma_3$ be given. We define the Markov jump
  process
  $\widetilde{\mathcal L} = (\widetilde{\underline L}(t))_{t\geq 0}$
  with
  $\widetilde{\underline L}(t) = (\widetilde L_0(t), \widetilde
  L_1(t), \widetilde L_2(t), \widetilde L_3(t))$.
  For the initial state, take
  $\widetilde{\underline L}_i(0) \sim \text{Poi}(2(\alpha+\rho)x_i)$,
  $i=0,1,2,3$, independently, and condition on
  $\widetilde L_0(0) + \widetilde L_1(0) + \widetilde L_2(0) +
  \widetilde L_3(0) > 0$.
  The dynamics is as follows: If
  $\widetilde{\underline L}(t) = \underline \ell := (\ell_0, \ell_1,
  \ell_2, \ell_3)$,
  jumps occur to (note that
  $\underline e_i = (\delta_{ij})_{j=0,1,2,3}$)
  \begin{align*}
    & \underline \ell + \underline e_i \text{ at rate } (\alpha + \rho) \ell_i, i=0,1,2,3,
    \\ 
    & \underline \ell - \underline e_i \text{ at rate } \binom{\ell_i}{2} + 
      \frac 12 \sum_{j\neq i} \ell_i \ell_j \frac{\alpha - \alpha_i + \alpha_j}{\alpha+\rho} 
      + \frac 12 \ell_i (\ell_1 + \ell_2) \frac{\rho}{\alpha + \rho}, \quad i=0,3\\
    & \underline \ell - \underline e_i \text{ at rate } \binom{\ell_i}{2} + 
      \frac 12 \sum_{j\neq i} \ell_i \ell_j \frac{\alpha - \alpha_i + \alpha_j}{\alpha+\rho} 
      + \frac 12 \ell_i (\ell_0 + \ell_3) \frac{\rho}{\alpha + \rho}, \quad i=1,2\\
    & \underline \ell + \underline e_i - \underline e_1 - \underline e_2 
      \text{ at rate } \frac 12 \ell_1 \ell_2 \frac{\rho}{\alpha + \rho} \text{ for }i=0,3,
    \\
    & \underline \ell + \underline e_i - \underline e_0 - \underline e_3 
      \text{ at rate } \frac 12 \ell_0 \ell_3 \frac{\rho}{\alpha + \rho} \text{ for }i=1,2.
  \end{align*}
  We denote the distribution of $\widetilde{\mathcal L}$ by
  $\mathbb P_{\underline x}$.
\end{definition}

\begin{proposition}[The probability of fixation and the reversed ASRG\label{P:fixL3}]
  Let $\Pi$ be as in Proposition~\ref{P:fix},
  $J_1(\tau),...,J_\Pi(\tau)$ as in Definition~\ref{def:ASRG2} and
  $\widetilde{\mathcal L}$ as in Definition~\ref{def:L}. Then,
  \begin{align*}
    \mathbb P_{\underline x}(J_1(\tau) = \cdots = J_\Pi(\tau) = i) = 
    \mathbb P_{\underline x}(\widetilde L_j(\tau) = 0, j\neq i).
  \end{align*}
  In particular, combining with \eqref{eq:fixJ}
  \begin{align}\label{eq:fixL3}
    \mathbb P_{\underline x}(X_i(\infty)=1) = \mathbb P_{\underline x}(\widetilde L_j(\infty) = 0, j\neq i).
  \end{align}
\end{proposition}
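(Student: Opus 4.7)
The plan is to identify $\widetilde{\mathcal L}$ with the \emph{time-reversal} of the colored line-counting process obtained by running the ASRG from its stationary distribution $\Pi$ and attaching the coloring rule of Definition~\ref{def:ASRG2}.3. By Proposition~\ref{P:fix}.1 the line-counting process $(K^\Pi_\beta)_{\beta\geq 0}$ started in $\Pi$ is stationary and reversible, so for each fixed $\tau>0$ it has the same law as its reversal under the time-change $t=\tau-\beta$. The coloring is performed at $\beta=\tau$ (that is, $t=0$) and propagated in the direction of decreasing $\beta$ (equivalently, increasing $t$) down to $\beta=0$ (that is, $t=\tau$); under the identification $t=\tau-\beta$, the resulting type-count process at time $t$ is what I claim agrees in law with $\widetilde{\mathcal L}$. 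In particular, at $t=\tau$ the count of type-$j$ lines equals $\#\{m\leq\Pi:J_m(\tau)=j\}$, so $\{\widetilde L_j(\tau)=0\text{ for all }j\neq i\}$ is literally the event $\{J_1(\tau)=\cdots=J_\Pi(\tau)=i\}$.

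Matching the initial distribution is straightforward: the $K_\tau^\Pi\sim\Pi$ lines at $\beta=\tau$ are colored independently, type $i$ with probability $x_i$, and Poisson thinning of the $\mathrm{Poi}(2(\alpha+\rho))$ total (conditioned positive) yields independent $\mathrm{Poi}(2(\alpha+\rho)x_i)$ marginals conditioned on positive sum, which coincides with $\widetilde{\underline L}(0)$ in Definition~\ref{def:L}.

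Matching the transitions is where the main work lies. In reversed time each event of the ASRG appears either as a \emph{split} (reversal of a backward coalescence) or a \emph{merge} (reversal of a backward branching). Reversibility gives total split rate $(\alpha+\rho)k$ and total merge rate $\binom{k}{2}$ when $k$ lines are present; since a backward coalescence viewed forward in real time is an ancestor producing two same-type descendants, a split contributes $+\underline e_i$ at rate $(\alpha+\rho)\ell_i$. Each merge picks an unordered pair of lines uniformly (at rate $1$ per pair) and draws an independent label from \eqref{labelprob}; the outgoing type is then obtained by applying \eqref{eq:splitSel} for selective labels, or Table~\ref{tab2} for the recombination label, in both cases with a uniformly random role assignment (incoming/continuing, or $A/a$-line/$B/b$-line). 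A short case analysis then shows that a selective merge of types $\{i,j\}$, $i\neq j$, with label $m$ produces $-\underline e_i$ with conditional probability $\tfrac12(1_{i<m}+1_{j\geq m})$, and the telescoping identities
\begin{align*}
  \sum_{m=1}^3(\alpha_m-\alpha_{m-1})1_{i<m}&=\alpha-\alpha_i, & \sum_{m=1}^3(\alpha_m-\alpha_{m-1})1_{j\geq m}&=\alpha_j
\end{align*}
collapse the selective contribution into the summand $\tfrac12\ell_i\ell_j\tfrac{\alpha-\alpha_i+\alpha_j}{\alpha+\rho}$; same-type merges (rate $\binom{\ell_i}{2}$) always yield outgoing $=i$ and produce the first summand; and recombination merges of a pair $\{i,j\}$ with $i+j\neq 3$ return outgoing in $\{i,j\}$ with probability $\tfrac12$ each, matching the third summand of the $-\underline e_i$ rate, while the pairs $\{0,3\}$ and $\{1,2\}$ exchange to the opposite pair and produce the two cross-type transitions of Definition~\ref{def:L}.

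The main obstacle is this case analysis for merges, in particular tracking the asymmetry between $i\in\{0,3\}$ and $i\in\{1,2\}$ in Definition~\ref{def:L}, which stems from how Table~\ref{tab2} distributes recombinant offspring between the two loci. Once the initial distribution and all rates are matched, standard generator arguments give that $\widetilde{\mathcal L}$ is equal in law to the reversed-time colored line-counting process, so the first equality of the Proposition follows from the identification at the end of the first paragraph. The statement~\eqref{eq:fixL3} is then obtained by letting $\tau\to\infty$ and invoking Proposition~\ref{P:fix}.2.
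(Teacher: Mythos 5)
Your proposal is correct and follows essentially the same route as the paper's proof: reversibility of $\Pi$ lets you build the ASRG forwards in time with the coloring done at initialization, the transition rates of the resulting type-count process are matched to Definition~\ref{def:L}, and \eqref{eq:fixL3} then follows from \eqref{eq:fixJ} by letting $\tau\to\infty$. In fact your telescoping computation $\sum_m(\alpha_m-\alpha_{m-1})(1_{i<m}+1_{j\geq m})=\alpha-\alpha_i+\alpha_j$ and the recombination-table case analysis supply exactly the details the paper declares it will omit (it verifies only one example term plus the total line-counting rate), so your write-up is a slightly more complete version of the same argument.
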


\begin{proof}
  We only have to show the first identity. Since the equilibrium $\Pi$
  of the line-counting process $(K_{\beta})_{0\leq \beta\leq \tau}$ is
  reversible, it is in distribution equivalent to construct the ASRG
  forwards in time and using the same rates for coalescence and
  branching. Hence, the (unlabelled) ASRG $\mathcal A^\Pi$ can as well
  be constructed forwards in time. Denote the time-reversed ASRG by
  $\widetilde{\mathcal A}^\Pi$. In this time direction, colorings of
  particles can be performed at time $\beta = \tau$, $t=0$ already
  when initializing the ASRG. The difference is that coalescence
  events for $\widetilde{\mathcal A}^\Pi$ are branching events for
  $\mathcal A^\Pi$ and vice versa. In $\widetilde{\mathcal A}^\Pi$,
  coalescence events obtain the labels in $\{1,2,3,a\}$ with
  probabilities as in \eqref{labelprob}, and upon a coalescence event
  with label $\{1,2,3\}$, call one line (picked at random from both
  lines) incoming and the other one continuing; upon a coalescence
  event with label $a$, call one line the $A/a$-line, the other one
  the $B/b$-line. Then, (recall that types are known in
  $\widetilde{\mathcal A}^\Pi$ since we have constructed the graph
  forwards in time) types in coalescence events are decided using
  either \eqref{eq:splitSel} or Table~\ref{tab2}. The resulting
  transitions are given in the definition of the process
  $\widetilde{\mathcal L}$. {\color{black} We will not give all details
    that this is the case, but provide a detailed explanation of one
    term as well as a calculation that the line-counting process
    derived from $\widetilde{\mathcal L}$ has the correct
    dynamics. First, } as an example, consider the transition
  $\underline \ell \to \underline \ell - \underline e_1$.  This occurs
  if either two lines of type~1 coalesce (no matter which label the
  coalescence event has), or if a line of type~1 coalesces with a
  second line. If the second line is of type~3 (which occurs at total
  rate $\ell_1\ell_3$), the chance that the coalescence event has mark
  in $\{1,2,3\}$ is $\alpha/(\alpha + \rho)$. If the line of type~1 is
  the incoming line, the outgoing line is of type~3 if the mark is in
  $\{2,3\}$, which has probability
  $(\alpha - \alpha_1)/(\alpha + \rho)$. If the line of type~1 is the
  continuing line and the line of type~3 is the incoming line, then
  the outgoing line is of type~3 in all cases. In total, the rate of a
  decrease of lines of type~1 by a coalescence with a line of type~3
  is
  $$\frac 12 \ell_1\ell_3 \Big(\frac{\alpha}{\alpha + \rho} + \frac{\alpha - \alpha_1}{\alpha + \rho}\Big);$$
  see Definition~\ref{def:L}. Similar arguments lead to all other
  terms in the definition of $\widetilde{\mathcal L}$.  {\color{black}
    Second, we check if the line-counting process derived from
    $\widetilde{\mathcal L}$ is indeed given by
    \eqref{linecountrates}, i.e.\ the sum total of transition rates
    are correct. Here, the total number of particles jumps from
    $\ell = \ell_0 + \ell_1 + \ell_2 + \ell_3$ to $\ell+1$ at rate
    $(\alpha + \rho)\ell$, and to $\ell-1$ at rate
    \begin{align*}
      \sum_{i=0}^3 & \binom{\ell_i}{2} + \frac 12 \sum_{i,j=0 \atop i\neq j}^3\ell_i \ell_j 
                     \frac{\alpha - \alpha_i + \alpha_j}{\alpha + \rho} + \big((\ell_0 + \ell_3)(\ell_1 + \ell_2) + (\ell_1 \ell_2 + \ell_0 \ell_3)\big)\frac{\rho}{\alpha + \rho}
      \\ & = \sum_{i=0}^3 \binom{\ell_i}{2} + \frac 12 \sum_{i,j=0 \atop i\neq j}^3\ell_i \ell_j 
           \frac{\alpha}{\alpha + \rho} + \frac 12 \sum_{i,j=0 \atop i\neq j}^3\ell_i \ell_j 
           \frac{\rho}{\alpha + \rho} 
      \\ & = \frac{(\ell_0 + \cdots + \ell_3)(\ell_0 + \cdots + \ell_3-1)}{2} = \binom \ell 2.
    \end{align*}
    Altogether, it follows that } the types in
  $\widetilde{\mathcal L}$ at time $t=\tau$ are in distribution the
  same as the types in $J_1,...,J_\Pi$. This implies the assertion.
\end{proof}

\subsection{Proof of Proposition~\ref{P:fixProbL}}
\label{sub:fixProbL}
From~\eqref{eq:fixL3} we know how to compute fixation probabilities
from $\widetilde{\mathcal L}$. However, the process $\mathcal L$ used
in Proposition~\ref{P:fixProbL} differs from $\widetilde{\mathcal L}$
in two respects. First, its initial distribution fixes $L_2(0)=1$
rather than using some $\underline x\in\Sigma_3$. Second, the dynamics
of $\mathcal L$ and $\widetilde{\mathcal L}$ are only the same for
$\alpha\to\infty$. In order to compare the initial conditions, we need
the following simple lemma.

\begin{lemma}[Total variation distance of $\Pi, \Psi_n$\label{l:tv}]
  Let $\Pi = \Pi_{\alpha,\rho}$ {\color{black}with distribution
    $\mathcal L(\Pi)$} be as in Proposition~\ref{P:fix},
  $\Psi_n = {\color{black}\delta_n \ast} \text{Poi}(2\alpha)$ for some
  $n\in \mathbb Z$.  Then, for large $\alpha$, the total variation
  distance $d_{TV}$ obeys
  $$d_{TV}(\mathcal L(\Pi), \Psi_n) = o(1).$$
\end{lemma}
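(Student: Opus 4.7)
The first step is to remove the positivity conditioning in the definition of $\Pi$: writing $P := \text{Poi}(2(\alpha+\rho))$, one has $\mathcal L(\Pi) = P(\,\cdot\mid \text{positive})$, and a direct calculation shows
$$d_{TV}(\mathcal L(\Pi), P) \;=\; e^{-2(\alpha+\rho)} \;=\; o(1).$$
Hence it suffices to prove $d_{TV}(P, \Psi_n) \to 0$ as $\alpha \to \infty$.

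Next, I would exploit the convolution identity $P = \text{Poi}(2\alpha) \ast \text{Poi}(2\rho)$. Realize this by taking independent $X \sim \text{Poi}(2\alpha)$ and $Y \sim \text{Poi}(2\rho)$, so that $X + Y \sim P$ while $X + n \sim \Psi_n$. Conditioning on $Y$ and using the translation invariance of total variation yields
$$d_{TV}(X+Y,\, X+n) \;\leq\; \sum_{y \geq 0} \mathbb P(Y = y)\, d_{TV}(X,\, X + n - y).$$

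The key ingredient is then the shift estimate
$$d_{TV}\bigl(\text{Poi}(\lambda),\, \text{Poi}(\lambda) + k\bigr) \;\leq\; \frac{C\,|k|}{\sqrt{\lambda}} \qquad (k \in \mathbb Z,\ \lambda \text{ large})$$
for an absolute constant $C$. This can be verified either by telescoping over unit shifts and estimating $|\mathbb P(\text{Poi}(\lambda)=m+1) - \mathbb P(\text{Poi}(\lambda)=m)|$ via Stirling's formula, or (more cleanly) via the local central limit theorem for Poisson variables. Plugging this in with $\lambda = 2\alpha$ and using $\mathbb E|n - Y| \leq |n| + 2\rho = O(1)$ gives
$$d_{TV}(X + Y,\, X + n) \;\leq\; \frac{C\,\mathbb E|n - Y|}{\sqrt{2\alpha}} \;=\; O(\alpha^{-1/2}),$$
which is $o(1)$ and completes the proof.

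The only mildly delicate step is the shift estimate above; everything else is bookkeeping. An alternative is to bypass it by a Gaussian local CLT for both $P$ and $\Psi_n$, comparing their mass functions pointwise against the common approximating normal density on the typical window $|k - 2\alpha| \lesssim \sqrt{\alpha \log \alpha}$ and controlling the tails by Chernoff, but the coupling argument above is shorter and yields the quantitative rate $O(\alpha^{-1/2})$ for free, which will be convenient later when chaining this estimate against the dynamics of $\widetilde{\mathcal L}$ and $\mathcal L$.
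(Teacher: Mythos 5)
Your proposal is correct, and it reaches the conclusion by a decomposition that differs from the paper's in how the parameter mismatch $2\rho$ is handled. The paper first compares $\mathcal L(\Pi)$ (Poisson$(2(\alpha+\rho))$ conditioned positive) with $\Psi_0=\text{Poi}(2\alpha)$ in one stroke, via an explicit computation: it writes the total variation as an expectation $\mathbf E\bigl[\,\bigl|1-e^{-2\rho}(1+\rho/\alpha)^X\bigr|\,\bigr]$ with $X\sim\text{Poi}(2\alpha)$ and bounds it by Cauchy--Schwarz using the Poisson generating function, getting $(e^{2\rho^2/\alpha}-1)^{1/2}=o(1)$; it then handles the deterministic shift $n$ by telescoping unit shifts, each unit shift being bounded by $\mathbb E\bigl[|1-X/(2\alpha)|\bigr]\leq \mathbb E\bigl[(1-X/(2\alpha))^2\bigr]^{1/2}=O(\alpha^{-1/2})$. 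You instead peel off the conditioning exactly (cost $e^{-2(\alpha+\rho)}$), use the convolution identity $\text{Poi}(2(\alpha+\rho))=\text{Poi}(2\alpha)\ast\text{Poi}(2\rho)$ to convert the $\rho$-discrepancy into a random integer shift, and then invoke a single shift estimate $d_{TV}(\text{Poi}(\lambda),\text{Poi}(\lambda)+k)\leq C|k|/\sqrt\lambda$, which is indeed standard (telescoping plus unimodality or a local CLT; note your unit-shift bound is the same quantity the paper estimates for $d_{TV}(\Psi_i,\Psi_{i+1})$, just proved with an explicit constant). The trade-off: your argument is more modular and yields the quantitative rate $O(\alpha^{-1/2})$ uniformly in fixed $n$ and $\rho$, at the price of importing the general shift lemma; the paper's argument is self-contained and avoids that lemma, but treats conditioning, parameter shift, and integer shift in two somewhat ad hoc computations and only records $o(1)$. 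Both are valid proofs of the statement; also note the paper's displayed computations use the unnormalized $\ell^1$ distance while your exact value $e^{-2(\alpha+\rho)}$ uses the normalized convention, a factor-of-two discrepancy that is immaterial here.
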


\begin{proof}
  {\color{black} Let $X\sim \Psi_0$. Considering $n=0$ first, recall
    that $\Pi$ is the Poisson-distribution with parameter
    $2(\alpha + \rho)$, conditioned to be positive. Therefore, since
    $\mathbf E[s^X] = e^{-2\alpha(1-s)}$ for $s\in(0,1)$,
    \begin{align*}
      d_{TV}(\mathcal L(\Pi), & \Psi_0) 
                                = o(1) + e^{-2\alpha} \sum_{k=0}^\infty \Big| \frac{(2\alpha)^k}{k!} 
                                - e^{-2\rho} \frac{(2(\alpha + \rho))^k}{k!}\Big|
      \\ & =  o(1) + e^{-2\alpha} \sum_{k=0}^\infty \frac{(2\alpha)^k}{k!} \Big| 1 - e^{-2\rho} 
           \Big(1 + \frac\rho\alpha\Big)^k \Big| 
      \\ & = o(1) + \mathbf E\Big[ \Big| 1 - e^{-2\rho} 
           \Big(1 + \frac\rho\alpha\Big)^X \Big|\Big]
           \leq o(1) + \mathbf E\Big[ \Big( 1 - e^{-2\rho} 
           \Big(1 + \frac\rho\alpha\Big)^X \Big)^2\Big]^{1/2}
      \\ & = o(1) + \Big( 1 - 2e^{-2\rho} e^{-2\alpha(1 - (1+\rho/\alpha))} + e^{-4\rho}e^{-2\alpha (1 - (1+\rho/\alpha)^2)}\Big)^{1/2}
      \\ & = o(1) + \Big(e^{2\rho^2/\alpha} - 1\Big)^{1/2} = o(1).
    \end{align*}
    Now, }using a recursion and the triangle inequality, it suffices
  to prove that $d_{TV}(\Psi_{i}, \Psi_{i+1}) = o(1)$,
  $i\in\mathbb Z$. We compute,
  \begin{align*}
    d_{TV}(\Psi_{i}, \Psi_{i+1}) & = e^{-2\alpha}\Big( 1 + \sum_{k=1}^\infty \Big| \frac{(2\alpha)^{k}}{k!} 
                                   - \frac{(2\alpha)^{k-1}}{(k-1)!}\Big|\Big)
    \\ & =  e^{-2\alpha} \sum_{k=0}^\infty \frac{(2\alpha)^{k}}{k!} \Big| 1 
         - \frac{k}{2\alpha}\Big| + o(1)
    \\ & =  \mathbb E\Big[\Big|1 - \frac{{\color{black}X}}{2\alpha}\Big|\Big] + o(1)
         \leq \mathbb E\Big[\Big(1 - \frac{{\color{black}X}}{2\alpha}\Big)^2\Big]^{1/2} + o(1)
         = o(1).
  \end{align*}
\end{proof}

\begin{proof}[Proof of Proposition~\ref{P:fixProbL}]
  1. Combining with~\eqref{eq:fixL3}, we have to show that for
  $\underline x = (1 - \alpha^{-\psi} - \delta, \alpha^{-\psi},
  \delta, 0)$
  and $L$ starting as in Definition~\ref{def:L0},
  \begin{align}\label{eq:816}
    \lim_{\alpha\to\infty}\lim_{\delta\to 0}\frac{1}{2\alpha\delta} 
    \mathbb P_{\underline x_{\delta,\psi}}(\widetilde L_j(\infty) = 0, j\neq i) 
    =   \lim_{\alpha\to\infty}\mathbb P(L_j(\infty) = 0, j\neq i).
  \end{align}
  We have to show that the total variation distance between
  $\widetilde{\mathcal L}$ and $\mathcal L$ is small for large
  $\alpha$ (and in the limit $\delta\to 0$). Therefore, we have to
  compare both, the initial condition and the dynamics of the two
  processes.

  \noindent {\it Initial condition:} In $\widetilde{\mathcal L}$, we
  mark the $\Pi$ lines in {\color{black}$\widetilde L(0)$}
  independently with probabilities
  $1-\alpha^{-\psi}-\delta, \alpha^{-\psi}, \delta$ and $0$ with types
  $0,1,2,3$, respectively.  In ${\mathcal L}$, we mark the $\Psi_1$
  lines such that $L_2(0)=1$ and all other lines are marked with
  $0, 1, 3$ independently with probabilities
  $1-\alpha^{-\psi}, \alpha^{-\psi}$ and 0, respectively.

  As we see from Lemma~\ref{l:tv}, the total variation distance for
  the total number of lines if we use $\Psi_0$ instead of $\Pi$ lines
  at time~0 is negligible for large $\alpha$. In addition, in the
  limit $\delta\to 0$, a necessary condition for fixation
  {\color{black} of type $3\equiv AB$} to occur is that at least one
  particle is marked by $2\equiv aB$. Therefore, in $\widetilde L(0)$,
  this has approximate probability
  \begin{align*}
    \lim_{\alpha\to\infty} 
    & \lim_{\delta\to 0}\frac{1}{2\alpha\delta} \mathbb P_{\underline x_{\delta,\psi}}(\widetilde L_j(\infty) = 0, j\neq 3) 
    \\ & = \lim_{\alpha\to\infty}\lim_{\delta\to 0}\frac{1}{2\alpha\delta} \mathbb P_{\underline x_{\delta,\psi}}
         (\widetilde L_j(\infty) = 0, 
         j\neq 3 | \widetilde L_2(0)>0) \cdot \mathbb P_{\underline x_{\delta,\psi}}(\widetilde L_2(0)>0)
    \\ & = \lim_{\alpha\to\infty}\lim_{\delta\to 0}\frac{1}{2\alpha\delta} \mathbb P_{\underline x_{\delta,\psi}}
         (\widetilde L_j(\infty) = 0, 
         j\neq 3 | \widetilde L_2(0)=1) \cdot  \frac{1 - e^{-2\alpha\delta}}{1-e^{-2(\alpha + \rho)}}
    \\ & =  \lim_{\alpha\to\infty}\mathbb P(\widetilde L_j(\infty) = 0, 
         j\neq 3 | \widetilde L(0) = L(0)),
  \end{align*}
  where we have used Lemma~\ref{l:tv} in the last step.
  
  \noindent {\it Dynamics:} Note that the dynamics of $\mathcal L$
  arises from the dynamics of $\widetilde{\mathcal L}$, if we only
  take the leading terms in the rates of all transitions into
  account. For example,
  $\underline \ell \to \underline \ell-\underline e_1$ takes place due
  to encounters of types~0 and~1 at a rate
  $$\frac 12 \ell_1\ell_0\frac{\alpha - \alpha_1 + \rho}{\alpha +
    \rho} = \frac 12 \ell_1\ell_0 (1 - c_1)(1 + o(1))$$
  as $\alpha\to\infty$. Since the event of fixation has a
  {\color{black} continuous} dependence on the
  parameters $c_1, c_2$, we can bound fixation probabilities and times
  if we slightly change these parameters around the
  limits.\\
  2. Here, note that \eqref{eq:fixL3} does not hold for finite times,
  so we have to adapt our reasoning. First, note that \eqref{eq:816}
  still holds at finite times (since the initial conditions as well as
  the dynamics can be coupled as above).  Now, we adopt the argument
  of the proof of Lemma~2.5 in \cite{PokPfa2012}. (However, note that
  in the present paper, we are not working with conditional
  probabilities, such that the correction as in Remark 3.17 of
  \cite{greven2014fixation} does not apply here.)

  By Propositions~\ref{P:duality} and~\ref{P:fixL3}, we have that
  \begin{align*}
    \mathbb P_{\underline x}\Big( \frac{\alpha}{\log\alpha}S<\tau\Big) 
    = \mathbb E_{\underline x}\Big[X_{3}\Big(\tau \frac{\log\alpha}{\alpha}\Big)^\infty\Big] 
    = \mathbb P_{\underline x}\Big(J_1\Big(\tau \frac{\log\alpha}{\alpha}\Big) = J_2\Big(\tau \frac{\log\alpha}{\alpha}\Big) 
    = \cdots = 3\Big).
  \end{align*}
  Clearly, by monotonicity in the number of particles,
  \begin{align*}
    \mathbb P_{\underline x}\Big(J_1\Big(\tau \frac{\log\alpha}{\alpha}\Big) = J_2\Big(\tau \frac{\log\alpha}{\alpha}\Big) 
    = \cdots = 3) 
    & \leq 
      \mathbb P_{\underline x}
      \Big(J_1\Big(\tau \frac{\log\alpha}{\alpha}\Big) = \cdots = J_\Pi\Big(\tau 
      \frac{\log\alpha}{\alpha}\Big) = 3\Big) \\ & = \mathbb P_{\underline x}\Big(\widetilde 
                                                   L_j\Big(\tau\frac{\log\alpha}{\alpha}\Big) = 0, j\neq 3\Big), 
  \end{align*}
  which shows '$\leq$' in \eqref{pfinalfixresult2} (using
  \eqref{eq:816} at time $\tau \frac{\log\alpha}{\alpha}$). For
  '$\geq$', it suffices to show that for any $\varepsilon>0$,
  uniformly in $\underline x$,
  \begin{align*}
    \mathbb P_{\underline x}\Big( \frac{\alpha}{\log\alpha}S<\tau + \varepsilon\Big) 
    &                             =
      \mathbb P_{\underline x}\Big(J_1\Big((\tau+\varepsilon) \frac{\log\alpha}{\alpha}\Big) 
      = J_2\Big((\tau+\varepsilon) \frac{\log\alpha}{\alpha}\Big) = \cdots = 3\Big) 
    \\ & \geq 
         \mathbb P_{\underline x}\Big(\widetilde 
         L_j\Big(\tau\frac{\log\alpha}{\alpha}\Big) = 0, j\neq 3\Big) - \varepsilon
  \end{align*}
  as $\alpha\to\infty$. Recall that in the second term, the random
  variables $J_1, J_2,...$ are types picked at time $\beta = 0$,
  (i.e.\ at $t = (\tau+\varepsilon) \frac{\log\alpha}{\alpha}$) from
  $\mathcal B_0^\infty$. Now, for {\color{black}
    $N := \lfloor \alpha\rfloor$, we have that}
  $\mathbb P(N\leq \Pi)\geq 1-\varepsilon$, define
  $K_1=K_1\Big(\tau \frac{\log\alpha}{\alpha}\Big),...,
  K_N=K_N\Big(\tau \frac{\log\alpha}{\alpha}\Big)$,
  which are types of specific lines at time
  $\beta = \varepsilon \frac{\log\alpha}{\alpha}$ (i.e.\
  $t = \tau \frac{\log\alpha}{\alpha}$) with the property
  \begin{align*}
    \big\{K_1 & = \cdots = K_N=3\big\} \subseteq \Big\{J_1\Big((\tau
                + \varepsilon) \frac{\log\alpha}{\alpha}\Big) = J_2\Big((\tau +
                \varepsilon) \frac{\log\alpha}{\alpha}\Big) = \cdots = 3\Big\}
  \end{align*}
  on the event $\{N\leq \Pi\}$. These
  {\color{black}$N = \lfloor\alpha\rfloor$} lines exist with high
  probability for large $\alpha$, since following incoming lines at
  selective, and both lines at recombination branching events along
  the ASRG between times $\tau \frac{\log\alpha}{\alpha}$ and
  $(\tau + \varepsilon) \frac{\log\alpha}{\alpha}$, we find that
  coloring these lines with type~3 always leads to the event
  $J_1\Big((\tau + \varepsilon) \frac{\log\alpha}{\alpha}\Big) =
  J_2\Big((\tau + \varepsilon) \frac{\log\alpha}{\alpha}\Big) = \cdots
  = 3$.
  \sloppy Since coalescence and recombination branching events bring
  the infinitely many lines down to less than $\alpha$ lines
  in a time $\mathcal O(1/\alpha)$ (see~e.g.  Proposition~6.9 of
  \cite{DGP12}), we have found the specific
  {\color{black}$N = \lfloor\alpha\rfloor$} lines with types
  $K_1,\cdots, K_N$.  Then, from Proposition~\ref{P:fixL3}, and since
  $K_1,...,K_N,J_1\big(\tau\frac{\log\alpha}{\alpha}\big),
  J_2\big(\tau\frac{\log\alpha}{\alpha}\big),...$ are exchangeable,
  \begin{align*}
    \mathbb P_{\underline x}\Big( & \widetilde L_j\Big(\tau\frac{\log\alpha}{\alpha}\Big) = 0, j\neq 3\Big) 
                                    = 
                                    \mathbb P_{\underline x}
                                    \Big(J_1\Big(\tau \frac{\log\alpha}{\alpha}\Big) = \cdots = J_\Pi\Big(\tau 
                                    \frac{\log\alpha}{\alpha}\Big) = 3\Big) 
    \\ & 
         \leq  
         \mathbb P_{\underline x}\Big(K_1\Big(\tau \frac{\log\alpha}{\alpha}\Big) = \cdots = K_N\Big(\tau \frac{\log\alpha}{\alpha}\Big) = 3, N\leq\Pi\Big) + \mathbb P(N>\Pi) 
    \\ & 
         \leq 
         \mathbb P_{\underline x}\Big(J_1\Big((\tau+\varepsilon) \frac{\log\alpha}{\alpha}\Big) 
         = J_2\Big((\tau+\varepsilon) \frac{\log\alpha}{\alpha}\Big) = \cdots = 3\Big) + \varepsilon.
  \end{align*}
\end{proof}

\section{Auxiliary results}
\label{S:aux}
From Proposition~\ref{P:fixProbL}, we have formulated both theorems in
terms of the Markov jump process $\mathcal L$. For this process, we
will need to bound the probabilities of several events in the proof of
the Theorems. We collect basic facts about birth-death processes in
this section.

\subsection{Results on homogeneous birth-death processes}
We recall and extend some results from previous work on similar
models. Above all, we will refer to \cite{greven2014fixation} and
\cite{Cuthetal2012}. We start by recalling a result on the
concentration of the total number of particles in the ASRG at any
time. For this, the following lemma is a special case of Lemma~4.1 of
\cite{greven2014fixation} (using $\rho=1$ and $d=1$ in their
paper). In order to see this, we note that $L_0 + L_1 + L_2 + L_3$
with $L_0,...,L_3$ from Definition~\ref{def:L0} is a
birth-death-process with birth rate $\lambda_k = \alpha k$ and death
rate $\mu_k = (1+\rho/\alpha)\binom k 2$.

\begin{lemma}[$\underline L$ concentrated around
  $2\alpha$\label{l:conc}]
  Let $\mathcal L = (\underline L(t))_{t\geq 0}$ be as in
  Definition~\ref{def:L0} 
  If
  $(L_0(0) + L_1(0) + L_2(0) + L_3(0))/\alpha
  \xRightarrow{\alpha\to\infty} 2$,
  then for any $t_\alpha \downarrow 0$,
  \begin{align*}
    \sup_{0\leq r\leq t_\alpha} \Big| \frac{L_0(r) + L_1(r) + L_2(r) + L_3(r)}{\alpha} - 2\Big|
    \xRightarrow{\alpha\to\infty} 0.
  \end{align*}
\end{lemma}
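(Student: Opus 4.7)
The plan is to reduce the four-dimensional process to a scalar birth--death process and invoke the cited fluid limit, or equivalently to give a self-contained martingale argument. First, I observe that every transition in Definition~\ref{def:L0} changes the total $K(t):=L_0(t)+L_1(t)+L_2(t)+L_3(t)$ by exactly $\pm 1$: branching events add one particle at total rate $\alpha K$, whereas the coalescence/selection and recombination transitions each remove one. Summing the second family of rates over $i$ and using the $i\leftrightarrow j$ antisymmetry $\sum_{i\neq j}\ell_i\ell_j(c_j-c_i)=0$, one finds an aggregate non-recombination death rate of exactly $\binom{K}{2}$, plus a recombination contribution $(\ell_1\ell_2+\ell_0\ell_3)\rho/\alpha$, which is bounded above by $\binom{K}{2}\rho/\alpha$. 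Hence $K$ is sandwiched between two birth--death processes of the kind treated in \cite{greven2014fixation}, with birth rate $\alpha k$ and death rate $(1+o(1))\binom{k}{2}$.

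Once this reduction is in place, the claim is a direct special case of \cite[Lemma~4.1]{greven2014fixation}, which for exactly this class of birth--death processes asserts that $K/\alpha$ is asymptotically pinned to the stable fixed point $y=2$ of the ODE $\dot y=\alpha y(1-y/2)$, uniformly over any shrinking interval $[0,t_\alpha]$. The $O(\rho/\alpha)$ perturbation from recombination shifts the fixed point only by $o(1)$ and is absorbed into the error, which accounts for the slight discrepancy between the authors' cited parameters and ours.

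For a self-contained proof I would set $Y^\alpha(t):=K(t)/\alpha$ and write its Doob--Meyer decomposition
\[
Y^\alpha(t)=Y^\alpha(0)+\int_0^t\alpha Y^\alpha(s)\bigl(1-\tfrac12 Y^\alpha(s)\bigr)\,ds+M^\alpha(t)+o(1),
\]
where the jumps of $Y^\alpha$ have size $1/\alpha$ and total rate $O(\alpha^2)$ when $Y^\alpha$ is of order one, so the predictable quadratic variation of $M^\alpha$ accrues at bounded rate. Doob's $L^2$-inequality then yields $\sup_{r\le t_\alpha}|M^\alpha(r)|=O_P(\sqrt{t_\alpha})\to 0$, while the drift, linearised around $y=2$, contracts deviations at rate $\alpha$, so the deterministic contribution remains bounded by $|Y^\alpha(0)-2|=o(1)$. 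The main technical obstacle is ruling out rare early excursions of $Y^\alpha$ to large values, where both the linearisation and the variance bound would fail; this is handled by a standard stopping-time argument: stopping at $\sigma_\alpha:=\inf\{t:Y^\alpha(t)\ge 3\}$, the strongly restoring drift above $y=2$ together with the martingale bound give $\mathbb P(\sigma_\alpha\le t_\alpha)\to 0$, and up to $\sigma_\alpha$ the preceding estimates are uniform, which concludes the argument.
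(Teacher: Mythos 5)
Your proposal is correct and follows essentially the same route as the paper, which likewise reduces the claim to the scalar line-counting process $L_0+L_1+L_2+L_3$, a birth--death process with birth rate $\alpha k$ and death rate of order $\binom{k}{2}$ (the paper writes $\mu_k=(1+\rho/\alpha)\binom{k}{2}$), and then cites Lemma~4.1 of \cite{greven2014fixation}. Your sandwiching treats the composition-dependent recombination death rate $(\ell_1\ell_2+\ell_0\ell_3)\rho/\alpha\leq\binom{k}{2}\rho/\alpha$ slightly more carefully than the paper's one-line identification, and the martingale/stopping-time sketch is a valid self-contained alternative, but no new idea beyond the paper's argument is involved.
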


\noindent
We now state some results on birth-death processes, which can be
approximated by branching processes. The first is Lemma 6.1 from
\cite{Cuthetal2012}.

\begin{lemma}[Binary branching process\label{l2}]
  Let $(L(t))_{t\geq 0}$ be a binary branching process with
  (individual) birth rate $\lambda$ and death rate $\mu$. If $L(0)=1$
  and $\lambda > \mu$, then, for some constant $C_{\lambda,\mu}$,
  which only depends on $\lambda/(\lambda+\mu)$ and
  $\mu/(\lambda+\mu)$,
  \begin{align*}
    |\mathbb P(L(t)=0) - \mu/\lambda| & \leq \mu e^{-(\lambda - \mu)t}/\lambda,\\
    \mathbb P(1\leq L(t)\leq K) & \leq C_{\lambda,\mu} K e^{-(\lambda-\mu)t} \text{ if }K\leq e^{(\lambda-\mu)t}/6,\\
    \mathbb P(\sup_{s\leq t}L(s)\geq K) & \leq C_{\lambda,\mu}e^{(\lambda-\mu)t}/K
  \end{align*}
\end{lemma}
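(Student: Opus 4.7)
The strategy is to exploit the fact that for a linear birth--death process with individual birth rate $\lambda$ and death rate $\mu$ started from $L(0)=1$, the one-dimensional distribution of $L(t)$ is available in explicit closed form from the generating function, so that each of the three bounds reduces to an elementary computation. The plan is to first solve the backward equation
\begin{equation*}
\partial_t f = \lambda(f^2-f) + \mu(1-f), \qquad f(s,0) = s,
\end{equation*}
for $f(s,t) = \mathbb E[s^{L(t)}]$. This Riccati equation has the standard explicit solution, which yields
\begin{equation*}
\mathbb P(L(t)=0) = \alpha(t) := \frac{\mu(e^{(\lambda-\mu)t}-1)}{\lambda e^{(\lambda-\mu)t}-\mu}, \qquad \mathbb P(L(t)=k) = (1-\alpha(t))(1-\beta(t))\beta(t)^{k-1}, \quad k\geq 1,
\end{equation*}
with
\begin{equation*}
\beta(t) := \frac{\lambda(e^{(\lambda-\mu)t}-1)}{\lambda e^{(\lambda-\mu)t}-\mu}, \qquad 1-\beta(t) = \frac{\lambda-\mu}{\lambda e^{(\lambda-\mu)t}-\mu}.
\end{equation*}

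For the first bound I would subtract $\mu/\lambda$ from the explicit $\alpha(t)$ and bound $\lambda e^{(\lambda-\mu)t}-\mu \geq (\lambda-\mu)e^{(\lambda-\mu)t}$ in the denominator, which produces exactly $\mu e^{-(\lambda-\mu)t}/\lambda$. For the second bound, summing the geometric tail gives $\mathbb P(1\leq L(t)\leq K) = (1-\alpha(t))(1-\beta(t)^K)$, so the elementary inequality $1-x^K \leq K(1-x)$ on $[0,1]$ combined with the explicit formula for $1-\beta(t)$ yields a bound of the required form $C_{\lambda,\mu} K e^{-(\lambda-\mu)t}$. The hypothesis $K\leq e^{(\lambda-\mu)t}/6$ enters only to keep $\beta(t)^K$ bounded away from $0$, so that the prefactor is controlled by a constant depending on $\lambda,\mu$ only through their ratio. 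For the third bound I would use that $M(t) := L(t)e^{-(\lambda-\mu)t}$ is a nonnegative martingale with $\mathbb E[M(t)] = 1$, so that Doob's maximal inequality gives
\begin{equation*}
\mathbb P\Bigl(\sup_{s\leq t} L(s) \geq K\Bigr) \leq \mathbb P\Bigl(\sup_{s\leq t} M(s) \geq K e^{-(\lambda-\mu)t}\Bigr) \leq \frac{e^{(\lambda-\mu)t}}{K}.
\end{equation*}

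The only delicate point is the bookkeeping of the constant $C_{\lambda,\mu}$ in the second estimate: one needs to verify that it depends only on the ratios $\lambda/(\lambda+\mu)$ and $\mu/(\lambda+\mu)$, not on the absolute scale of the rates. This is direct from the closed forms, since $\alpha(t)$, $\beta(t)$ and $(\lambda-\mu) e^{(\lambda-\mu)t}/(\lambda e^{(\lambda-\mu)t}-\mu)$ all depend on $\lambda,\mu$ only through the combination $(\lambda-\mu)t$ and the ratio $\mu/\lambda$. Since the lemma is quoted as Lemma~6.1 of \cite{Cuthetal2012}, one could alternatively simply cite their proof verbatim; the computation sketched above is however short enough to be reproduced in a self-contained way.
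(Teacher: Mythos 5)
Your proof is correct, and it is a genuinely different route from the paper, because the paper does not prove this lemma at all: it is imported verbatim as Lemma~6.1 of \cite{Cuthetal2012}, with no argument given. Your self-contained derivation via the classical closed form of the law of a linear birth--death process started from a single individual, $\mathbb P(L(t)=0)=\alpha(t)$ and $\mathbb P(L(t)=k)=(1-\alpha(t))(1-\beta(t))\beta(t)^{k-1}$ for $k\geq 1$, checks out in every step: the first estimate follows from $\alpha(t)-\mu/\lambda=-\mu(\lambda-\mu)/\bigl(\lambda(\lambda e^{(\lambda-\mu)t}-\mu)\bigr)$ together with $\lambda e^{(\lambda-\mu)t}-\mu\geq(\lambda-\mu)e^{(\lambda-\mu)t}$; the second from $\mathbb P(1\leq L(t)\leq K)=(1-\alpha(t))(1-\beta(t)^K)\leq K(1-\beta(t))\leq K e^{-(\lambda-\mu)t}$; and the third from the event inclusion $\{\sup_{s\leq t}L(s)\geq K\}\subseteq\{\sup_{s\leq t}M(s)\geq Ke^{-(\lambda-\mu)t}\}$ (valid since $s\mapsto e^{-(\lambda-\mu)s}$ is decreasing) and Doob's maximal inequality for the nonnegative martingale $M(t)=L(t)e^{-(\lambda-\mu)t}$ with $\mathbb E[M(t)]=1$. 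What your computation buys beyond the citation is that the second and third bounds actually hold with $C_{\lambda,\mu}=1$ (so the dependence of the constant on the rates is trivially as claimed), and that the hypothesis $K\leq e^{(\lambda-\mu)t}/6$ is not needed for the lemma as stated here; it is presumably retained in the paper only because the statement is copied from \cite{Cuthetal2012}. One small expository slip you should fix: you assert that this hypothesis ``enters only to keep $\beta(t)^K$ bounded away from~$0$,'' but your own chain of inequalities never uses it --- the elementary bound $1-x^K\leq K(1-x)$ on $[0,1]$ is unconditional --- so that sentence should simply be dropped rather than left as a (misleading) justification.
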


\begin{corollary}[Exponential growth\label{c3}]
  Let $(L(t))_{t\geq 0}$ be a binary branching process with
  (individual) birth rate $\alpha$ and death rate
  $c\alpha + o(\alpha)$ for $0<c<1$ for large $\alpha$. If $L(0)=1$,
  then for $t>0$,
  $$ Y(t) := \frac{\log L(t (\log\alpha)/\alpha)}{\log\alpha} \xRightarrow{\alpha\to\infty} M,$$
  where $\mathbb P(M=-\infty) = c, \mathbb P(M=(1-c)t)=1-c$.
\end{corollary}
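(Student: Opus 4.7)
The plan is to separate the extinction and survival regimes for the branching process, and extract the two atoms of $M$ by applying the three estimates of Lemma~\ref{l2} at the rescaled time $t_\alpha := t(\log\alpha)/\alpha$, with $\lambda = \alpha$ and $\mu = c\alpha + o(\alpha)$. Under this scaling one has $(\lambda-\mu)t_\alpha = (1-c)t\log\alpha \cdot (1+o(1))$, so $e^{-(\lambda-\mu)t_\alpha} = \alpha^{-(1-c)t(1+o(1))}\to 0$, while $\mu/\lambda = c + o(1)$, and the constants $C_{\lambda,\mu}$ in Lemma~\ref{l2} remain bounded because the ratios $\lambda/(\lambda+\mu)$ and $\mu/(\lambda+\mu)$ have limits in $(0,1)$.

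First I would invoke the first bound of Lemma~\ref{l2} to obtain $\mathbb P(L(t_\alpha) = 0) = \mu/\lambda + O(\alpha^{-(1-c)t(1+o(1))}) = c + o(1)$, which immediately accounts for the atom $\mathbb P(M = -\infty) = c$. Next, on the survival event $\{L(t_\alpha) \geq 1\}$, I would verify concentration of $L(t_\alpha)$ around $\alpha^{(1-c)t}$ by two tail estimates. For the upper tail I would apply the third bound of Lemma~\ref{l2} with $K = \lceil \alpha^{(1-c)t + \varepsilon}\rceil$, giving $\mathbb P(Y(t) > (1-c)t + \varepsilon) = O(\alpha^{-\varepsilon})\to 0$. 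For the lower tail I would apply the second bound with $K = \lfloor \alpha^{(1-c)t - \varepsilon}\rfloor$ (noting that the hypothesis $K \leq e^{(\lambda-\mu)t_\alpha}/6$ is satisfied for small $\varepsilon$ and large $\alpha$), yielding $\mathbb P(1 \leq L(t_\alpha) \leq \alpha^{(1-c)t - \varepsilon}) = O(\alpha^{-\varepsilon})\to 0$. Combining the two tails shows that $Y(t) \to (1-c)t$ in probability on the survival event; together with the extinction atom this is precisely the claimed weak convergence.

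The only delicate point is verifying that the $o(\alpha)$ correction in the death rate does not corrupt the leading-order exponent of $\alpha$ in these estimates. Since $(\lambda-\mu)t_\alpha - (1-c)t\log\alpha = o(\log\alpha)$, the induced perturbation of each power of $\alpha$ is sub-polynomial and can be absorbed into the arbitrary $\varepsilon$; and since $\mu/\lambda\to c$, the extinction probability converges to the right limit. I expect this bookkeeping of lower-order terms, rather than any structural difficulty, to be the only technical hurdle.
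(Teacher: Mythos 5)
Your proposal is correct and follows essentially the same route as the paper: apply the three estimates of Lemma~\ref{l2} at the rescaled time $t(\log\alpha)/\alpha$ to identify the extinction atom via the first bound and to control the lower and upper tails of $Y(t)$ around $(1-c)t$ via the second and third bounds. The only cosmetic difference is that the paper first reduces to death rate exactly $c\alpha$ by a continuity remark, whereas you carry the $o(\alpha)$ perturbation through the exponents and absorb it into $\varepsilon$, which is equally valid.
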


\begin{proof}
  By continuity, we only need to show the result for death rate equal
  to $c\alpha$. We use Lemma~\ref{l2} with $\lambda=\alpha$ and
  $\mu=c\alpha$. We get, for any $\delta>0$
  \begin{align*}
    \mathbb P(Y(t) & = -\infty) 
                     = \mathbb P(L(t)=0) 
                     = c + \mathcal O\Big(\frac{c\alpha}{\alpha}e^{-\alpha(1-c)t(\log\alpha)/\alpha}\Big) 
                     \stackrel{\alpha\to\infty}\approx c,\\
    \mathbb P(-\infty & < Y(t) \leq (1-c-\delta)t) 
                        = \mathbb P(1\leq L(t (\log\alpha)/\alpha) 
                        \leq e^{\alpha (1 - c - \delta)t (\log\alpha)/\alpha})
    \\ & \leq C_{1, c}e^{\alpha (1 - c - \delta)t (\log\alpha)/\alpha} 
         e^{-\alpha(1-c)t(\log\alpha)/\alpha} = C_{1, c}e^{-\delta t \log\alpha} 
         \stackrel{\alpha\to\infty}{\approx} 0,
    \\
    \mathbb P(Y(t) & > (1-c+\delta)t) 
                     = 
                     \mathbb P(L(t (\log\alpha)/\alpha) 
                     > e^{\alpha (1 - c + \delta)t (\log\alpha)/\alpha})
    \\ & \leq C_{1,c}e^{\alpha(1-c)t(\log\alpha)/\alpha}e^{-\alpha(1-c+\delta)t(\log\alpha)/\alpha} = 
         C_{1, c}e^{-\delta t \log\alpha} \stackrel{\alpha\to\infty}{\approx} 0.
  \end{align*}
  Hence the result follows.
\end{proof}

~ The following two lemmata are extensions of Lemma 4.6 in
\cite{greven2014fixation}.

\begin{lemma}[Growth rate of binary branching
  process\label{l:growthBin}]
  Let $(L(t))_{t\geq 0}$ be a binary branching process with
  (individual) birth rate $\alpha$ and death rate
  $c\alpha + o(\alpha)$ for $c\geq 0$. If $L(0)=c'\alpha^\psi$ for
  some $\psi,c'>0$ and $Z(t) := \psi + (1-c)t$, then
  $$ Y := \Big(\frac{\log L(t (\log\alpha)/\alpha)}{\log\alpha}\Big)_{t\geq 0} 
  \xRightarrow{\alpha\to\infty} Z.$$
\end{lemma}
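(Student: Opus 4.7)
The plan is to exploit the branching property together with a second-moment concentration argument. Since $L(0)=\lfloor c'\alpha^\psi\rfloor$ is large, we can decompose
\[
L(t) = \sum_{i=1}^{\lfloor c'\alpha^\psi\rfloor} L^{(i)}(t),
\]
where the $L^{(i)}$ are i.i.d.\ copies of the binary branching process started from a single individual. A law-of-large-numbers type concentration of this sum around its mean $\lfloor c'\alpha^\psi\rfloor e^{(1-c)\alpha t}$ will, after taking logarithms and rescaling by $\log\alpha$, produce exactly the deterministic path $Z$.

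First I would reduce to the case of death rate exactly $c\alpha$: since the limit $Z$ depends continuously on $c$, the $o(\alpha)$ correction is absorbed by sandwiching between birth-death processes with death rates $(c\pm\delta)\alpha$ and letting $\delta\to 0$. Then, for a binary branching process started from $1$ with birth rate $\alpha$ and death rate $c\alpha$ ($c\neq 1$), standard generating-function computations give $\mathbb{E}[L^{(1)}(t)] = e^{(1-c)\alpha t}$ and $\mathrm{Var}(L^{(1)}(t)) \leq C_c\, e^{2(1-c)\alpha t}$ with $C_c=(1+c)/|1-c|$. Consequently,
\[
M(t) := \frac{L(t)}{\lfloor c'\alpha^\psi\rfloor}\, e^{-(1-c)\alpha t}
\]
is a positive martingale with $\mathbb{E}[M(t)]=1$ and, by independence of the $L^{(i)}$, $\mathrm{Var}(M(t)) \leq C_c/(c'\alpha^\psi)$. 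Doob's maximal inequality applied to the submartingale $(M-1)^2$ then yields, for any $T,\varepsilon>0$,
\[
\mathbb{P}\Bigl(\sup_{s\leq T\log\alpha/\alpha}|M(s)-1|>\varepsilon\Bigr) \leq \frac{\mathrm{Var}(M(T\log\alpha/\alpha))}{\varepsilon^2} = O(\alpha^{-\psi}) \longrightarrow 0.
\]

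On this high-probability event, taking logarithms and dividing by $\log\alpha$ gives
\[
Y(t) = \frac{\log\lfloor c'\alpha^\psi\rfloor}{\log\alpha} + (1-c)t + \frac{\log M(t\log\alpha/\alpha)}{\log\alpha} = \psi + (1-c)t + o(1)
\]
uniformly for $t$ in compact intervals, in probability. Since $Z$ is deterministic and continuous, this is precisely the desired weak convergence $Y\Rightarrow Z$.

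The main obstacle will be the subcritical regime $c>1$, where the bound on $\mathrm{Var}(M(t\log\alpha/\alpha))$ deteriorates to $O(\alpha^{-\psi+2(c-1)t})$, so the concentration above is only useful for $t<\psi/(2(c-1))$; past this threshold $L$ becomes extinct with high probability, and the convergence must be interpreted under the convention $\log 0=-\infty$. Since in the applications of this lemma only the supercritical case $c<1$ arises (for which the variance bound on $M$ is uniform in $t$), this issue does not affect the main line of argument, and the critical case $c=1$ is excluded by the strict inequalities $0<c_1<c_2<1$ in force throughout the paper.
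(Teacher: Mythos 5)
Your argument is correct for the regime in which the lemma is actually invoked, but it proceeds by a genuinely different route than the paper. The paper writes $L(t(\log\alpha)/\alpha)=\alpha^{Y(t)}$, observes that $Y$ is itself a Markov process, computes its generator on $\mathcal C^1$-functions and shows it converges to $(1-c)f'(y)$, i.e.\ to the generator of the deterministic flow $Z$, concluding by the convergence theorems of Ethier and Kurtz (Lemma 4.5.1 and Remark 4.5.2 there). You instead use the branching decomposition into $\lfloor c'\alpha^\psi\rfloor$ i.i.d.\ single-ancestor copies, the exponential martingale $M(t)=L(t)e^{-(1-c)\alpha t}/L(0)$, a second-moment bound and Doob's inequality to get $\sup_{s\leq T\log\alpha/\alpha}|M(s)-1|\to 0$ in probability, and then take logarithms. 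Your route is more elementary and self-contained (no operator-convergence machinery) and is quantitative: it yields explicit $O(\alpha^{-\psi})$ error probabilities, uniformly on compact time windows, and it makes visible exactly where supercriticality enters, namely in the uniform-in-$t$ bound $\mathrm{Var}(M(t))\leq C_c/L(0)$. The paper's generator computation is shorter, absorbs the $o(\alpha)$ perturbation of the death rate painlessly, and treats the dynamics of $Y$ directly, at the price of invoking Ethier--Kurtz. Two small remarks on your write-up: the displayed bound $\mathrm{Var}(L^{(1)}(t))\leq C_c e^{2(1-c)\alpha t}$ with $C_c=(1+c)/|1-c|$ is valid only for $c<1$ (for $c>1$ the variance is of order $e^{(1-c)\alpha t}$), which is precisely the deterioration you acknowledge afterwards, so this is cosmetic; and the caveat you raise for $c>1$ (extinction versus the finite value $\psi+(1-c)t$) is not a defect of your proof relative to the paper's --- the paper's statement nominally allows $c\geq 1$ and its generator argument has the same limitation near extinction --- while every application in the paper (growth of $L_1$, of $L_3$, and the pure-birth process in Lemma~\ref{l:5a}) uses $c\in[0,1)$, so restricting to the supercritical case is harmless.
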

  
\begin{proof}
  Again, we only prove the result for death rate $c\alpha$. We have
  that $L(t(\log\alpha)/(\alpha)) = \alpha^{Y(t)}$. Clearly,
  $Y(0) \stackrel{\alpha\to\infty}\approx\psi$, and $Y$ is a Markov
  process with generator, for $f\in\mathcal C^1(\mathbb R)$
  \begin{align*}
    Gf(y) & = \alpha^y \log\alpha \Big(f\Big(\frac{\log(\alpha^y + 1)}{\log\alpha}\Big) - f(y) + c\Big(
            f\Big(\frac{\log(\alpha^y - 1)}{\log\alpha}\Big) - f(y)\Big)\Big) 
    \\ & = \alpha^y \log\alpha \Big(f\Big(y + \frac{\log(1 + \alpha^{-y})}{\log\alpha}\Big) - f(y) + c\Big(
         f\Big(y + \frac{\log(1 - \alpha^{-y})}{\log\alpha}\Big) - f(y)\Big)\Big) 
    \\ & \stackrel{\alpha\to\infty}\approx 
         \alpha^y \log\alpha \Big(f\Big(y + \frac{1}{\alpha^{y}\log\alpha}\Big) - f(y) + c\Big(
         f\Big(y - \frac{1}{\alpha^{y}\log\alpha}\Big) - f(y)\Big)\Big) 
    \\ & \stackrel{\alpha\to\infty}\approx (1-c)f'(y).
  \end{align*}
  By standard arguments (see e.g.\ \citealp[{\color{black} Lemma 4.5.1
    and Remark 4.5.2}]{EthierKurtz1986}), the result follows.
\end{proof}

\begin{lemma}[Hitting times of super-critical branching
  process\label{l:5a}]
  Let $z, c'>0, c\in(0,1), \varepsilon \in (0,c/2)$, and
  ${\mathcal L} = (L_t)_{t\geq 0}$ be a birth-death process with birth
  rate ${b_k} = \alpha k$ and death rate
  $d_k \leq \alpha(1-c + \varepsilon) k$, started in
  $L_0 = z\alpha^{1-\varepsilon}$. Moreover, let $T_{n}$ be the first
  time when $L_t=n$. Then,
  $$ \mathbb P\Big( \frac{\alpha}{\log\alpha}T_{\varepsilon\alpha} > 
  \frac{2\varepsilon}{c} \Big| L_0 = z\alpha^{1-\varepsilon} \Big)
  \xrightarrow{\alpha\to\infty} 0.$$
\end{lemma}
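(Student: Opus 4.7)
The proof will reduce the statement to a clean branching process computation via two steps: a stochastic domination that eliminates the inexact death rate, followed by an application of Lemma~\ref{l:growthBin} to pin down the growth exponent. First, since the death rate of $\mathcal{L}$ satisfies $d_k \leq \alpha(1-c+\varepsilon)k$, the process $\mathcal{L}$ can be coupled from below by a binary branching process $\mathcal{L}^{BP}=(L^{BP}_t)_{t\geq 0}$ with per-particle birth rate $\alpha$ and per-particle death rate exactly $\alpha(1-c+\varepsilon)$, started at $L^{BP}_0 = z\alpha^{1-\varepsilon} = L_0$. Standard thinning gives $L_t \geq L^{BP}_t$ for all $t\geq 0$ almost surely, so $T_{\varepsilon\alpha}^{L} \leq T_{\varepsilon\alpha}^{L^{BP}}$. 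It therefore suffices to show that $L^{BP}$ exceeds $\varepsilon\alpha$ by time $t_\ast \tfrac{\log\alpha}{\alpha}$ with $t_\ast := 2\varepsilon/c$ with probability tending to~$1$.

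Second, I apply Lemma~\ref{l:growthBin} to $\mathcal{L}^{BP}$ with parameters $c' = z$, $\psi = 1-\varepsilon$, and death-rate coefficient $1-c+\varepsilon$ (in the role of the lemma's~$c$). This yields
\begin{align*}
    Y(t) \;:=\; \frac{\log L^{BP}\!\bigl(t\,\tfrac{\log\alpha}{\alpha}\bigr)}{\log\alpha} \;\xRightarrow{\alpha\to\infty}\; Z(t) \;=\; (1-\varepsilon) + (c-\varepsilon)\,t,
\end{align*}
as processes in $t$. Evaluating at the target time $t_\ast = 2\varepsilon/c$ gives
\begin{align*}
    Z(t_\ast) \;=\; 1 - \varepsilon + (c-\varepsilon)\frac{2\varepsilon}{c} \;=\; 1 + \varepsilon\Bigl(1 - \frac{2\varepsilon}{c}\Bigr) \;>\; 1,
\end{align*}
where the strict inequality uses the hypothesis $\varepsilon < c/2$. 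In particular, $Z(t_\ast) > 1 + \tfrac{\log\varepsilon}{\log\alpha}$ for all large~$\alpha$.

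Third, I translate this back. Since $Y(t_\ast) \Rightarrow Z(t_\ast)$ and $Z(t_\ast)$ is a deterministic constant strictly above the level $\tfrac{\log(\varepsilon\alpha)}{\log\alpha} = 1 + \tfrac{\log\varepsilon}{\log\alpha}$, we obtain
\begin{align*}
    \mathbb{P}\bigl(L^{BP}\!\bigl(t_\ast\tfrac{\log\alpha}{\alpha}\bigr) \geq \varepsilon\alpha\bigr) \;=\; \mathbb{P}\bigl(Y(t_\ast) \geq 1 + \tfrac{\log\varepsilon}{\log\alpha}\bigr) \;\xrightarrow{\alpha\to\infty}\; 1.
\end{align*}
By the coupling, $L\bigl(t_\ast\tfrac{\log\alpha}{\alpha}\bigr) \geq \varepsilon\alpha$ with probability tending to~$1$, which is precisely $T_{\varepsilon\alpha} \leq \tfrac{2\varepsilon}{c}\tfrac{\log\alpha}{\alpha}$ with high probability. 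This is the claim.

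The only non-routine aspect is ensuring that the weak convergence of Lemma~\ref{l:growthBin} (which is phrased process-wise) really controls the one-point probability at $t_\ast$; this is immediate because $Z$ is continuous and deterministic at~$t_\ast$, so convergence of the marginal holds. The arithmetic identity $Z(t_\ast) > 1$ at $t_\ast = 2\varepsilon/c$ exploiting $\varepsilon < c/2$ is the one place where the precise constants have to line up, and this is what motivates the threshold $2\varepsilon/c$ in the statement.
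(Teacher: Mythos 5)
Your proof is correct and follows essentially the same route as the paper: dominate $\mathcal L$ from below by the branching process with death rate exactly $\alpha(1-c+\varepsilon)k$, then use Lemma~\ref{l:growthBin} to get log-scale growth at speed $c-\varepsilon$ from exponent $1-\varepsilon$, so that level $\varepsilon\alpha$ is reached before time $\tfrac{2\varepsilon}{c}\tfrac{\log\alpha}{\alpha}$ (equivalently $\varepsilon/(c-\varepsilon)<2\varepsilon/c$). The only immaterial difference is that the paper first passes to the skeleton of immortal lines, a pure birth process at rate $(c-\varepsilon)\alpha$ with binomially thinned initial condition, before invoking Lemma~\ref{l:growthBin}, whereas you apply that lemma directly to the dominating branching process with deaths, which works just as well since the lemma allows a positive death rate.
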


\begin{proof}
  It suffices to treat the case $d_k = \alpha(1-c+\varepsilon)$, since
  other cases have an earlier hitting time of
  $\varepsilon\alpha$. Now, $\mathcal L$ is a supercritical branching
  process and we let $\mathcal M = (M_t)_{t\geq 0}$ be the process of
  immortal lines in $\mathcal L$. Then, by classical theory, we find
  that each line in $\mathcal L$ belongs to $\mathcal M$ with
  probability $c-\varepsilon$. Moreover, for some {\color{black} random
    $Z'>0$ with $Z' \sim B(z\alpha^{1-\varepsilon}, c-\varepsilon)$,
    $M_0 = Z'\alpha^{1-\varepsilon}$, therefore
    $Z'/\alpha^{1-\varepsilon} \xrightarrow{\alpha\to\infty}
    z(c-\varepsilon)$}
  and $\mathcal M$ is a pure branching process with splitting rate
  $(c-\varepsilon)\alpha$. If $S_{\varepsilon\alpha}$ is the first
  time $t$ when $M_t = \varepsilon\alpha$, we find by
  Lemma~\ref{l:growthBin} that
  $$ \frac{\alpha}{\log\alpha} S_{\varepsilon\alpha} \xRightarrow{\alpha\to\infty}
  \frac{\varepsilon}{c-\varepsilon} < \frac{2\varepsilon}{c},$$
  since $\log(L_{t(\log\alpha)/\alpha})/(\log\alpha)$ grows (for large
  $\alpha$) linearly at speed $c-\varepsilon$. Using
  $S_{\varepsilon\alpha}\geq T_{\varepsilon\alpha}$, we conclude by
  \begin{align*}
    \mathbb P\Big( \frac{\alpha}{\log\alpha}T_{\varepsilon\alpha} > 
    \frac{2\varepsilon}{c} \Big| L_0 = z\alpha^{1-\varepsilon} \Big)
    \leq \mathbb P\Big( \frac{\alpha}{\log\alpha}S_{\varepsilon\alpha} > 
    \frac{2\varepsilon}{c} \Big| M_0 = z'\alpha^{1-\varepsilon} \Big)
    \xrightarrow{\alpha\to\infty} 0.
  \end{align*}
\end{proof}

\begin{lemma}[Fast middle phase of local sweep\label{l:3}] Let
  $\varepsilon, z>0$ with $0<z<2(1-\varepsilon)$, and
  ${\mathcal L} = (L_t)_{t\geq 0}$ be a birth-death process, started
  in $L_0 = z\alpha$,
  $$ \text{birth rate ${b_k} = \alpha k$ and death rate ${d_k} = \binom k 2 + \tfrac 12 k(2\alpha - k)c + o(\alpha)$}$$
  for some $c\in[0,1)$. Moreover, let $T_n$ be the first time when
  $L_t=n$. Then,
  $$ 
  T_{2\alpha (1-\varepsilon)} = O\Big(\frac 1\alpha\Big)
  $$
  as $\alpha\to\infty$. In particular, we find a sequence
  $\varepsilon_\alpha\downarrow 0$ such that
  \begin{align}
    \label{eq:l32}
    T_{2\alpha (1-\varepsilon_\alpha)} - T_{2\alpha
    \varepsilon_\alpha} = o\Big(\frac{\log\alpha}\alpha\Big).
  \end{align}
\end{lemma}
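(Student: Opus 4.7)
The plan is to read both claims off a fluid (logistic ODE) limit for the rescaled process $Y^\alpha(s) := L_{s/\alpha}/\alpha$. Expanding $b_k - d_k$ at $k = \alpha y$ gives net drift
\begin{equation*}
b_k - d_k = \alpha^2 \cdot \tfrac{(1-c)y(2-y)}{2} + O(\alpha),
\end{equation*}
(the $O(\alpha)$ absorbs both the linear part of $\binom{k}{2}$ and the assumed $o(\alpha)$ correction), while the total jump rate is of order $\alpha^2$ and jumps of $Y^\alpha$ have size $1/\alpha$, so the quadratic variation of $Y^\alpha$ accumulates at rate $O(1/\alpha)$ per unit $s$-time. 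A Kurtz-style density-dependent argument then yields
\begin{equation*}
Y^\alpha(\cdot) \Rightarrow y(\cdot), \qquad \dot y = \tfrac{1-c}{2}\,y(2-y), \qquad y(0)=z,
\end{equation*}
uniformly on compact $s$-intervals.

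Separating variables, the ODE solution hits $2(1-\varepsilon)$ at the explicit time
\begin{equation*}
s^\ast(\varepsilon) \;=\; \frac{1}{1-c}\log\frac{(2-z)(1-\varepsilon)}{z\,\varepsilon},
\end{equation*}
and crosses this level transversally with speed $\tfrac{1-c}{2}\cdot 2(1-\varepsilon)\cdot 2\varepsilon>0$. Continuity of first passage times at transversal crossings thus gives $T_{2\alpha(1-\varepsilon)} = s^\ast(\varepsilon)/\alpha + o_P(1/\alpha) = O(1/\alpha)$, which is the first statement.

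For the refinement, I note that $s^\ast(\varepsilon_\alpha) \sim (1-c)^{-1}\log(1/\varepsilon_\alpha)$ as $\varepsilon_\alpha\downarrow 0$, so \emph{any} choice with $\log(1/\varepsilon_\alpha)=o(\log\alpha)$, e.g.\ $\varepsilon_\alpha := 1/\log\alpha$, yields a fluid-limit target time $o(\log\alpha/\alpha)$. Since $L_0 = z\alpha > 2\alpha\varepsilon_\alpha$ for large $\alpha$, we have $T_{2\alpha\varepsilon_\alpha}=0$, and once $T_{2\alpha(1-\varepsilon_\alpha)} \leq s^\ast(\varepsilon_\alpha)/\alpha + o_P(\log\alpha/\alpha)$ is established, the claim \eqref{eq:l32} follows.

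The delicate step is that a naive Kurtz estimate degrades as $\varepsilon_\alpha\to 0$: near $y=2$ the linearization $g'(2)=-(1-c)$ makes Gr\"onwall amplify the intrinsic martingale error $O(\alpha^{-1/2})$ by a factor $e^{(1-c)s^\ast(\varepsilon_\alpha)}\asymp 1/\varepsilon_\alpha$, so the deviation $|Y^\alpha(s^\ast(\varepsilon_\alpha)) - y(s^\ast(\varepsilon_\alpha))|$ stays $o(\varepsilon_\alpha)$ only under $\varepsilon_\alpha^2\sqrt\alpha\to\infty$. The choice $\varepsilon_\alpha=1/\log\alpha$ easily satisfies this. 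Rigorously, this can be implemented either by a direct variance bound on the martingale $M_t := L_t - L_0 - \int_0^t (b_{L_s}-d_{L_s})\,ds$ via Doob's inequality, or by sandwiching $\mathcal L$ between two pure-branching processes for the pre-equilibrium phase (handled via Lemma~\ref{l:growthBin}) and invoking Lemma~\ref{l:conc} to close the gap once $L$ sits close to $2\alpha$. This Gr\"onwall-type amplification is the only step that genuinely requires quantitative care; everything else is standard density-dependent Markov chain machinery.
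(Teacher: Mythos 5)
Your argument is correct and follows essentially the same route as the paper: the paper rescales $V^\alpha_t = L_{t/\alpha}/\alpha$ and invokes the Ethier--Kurtz density-dependent limit to the same logistic ODE $\dot V = (1-c)V(1-\tfrac V2)$, reading off $T_{2\alpha(1-\varepsilon)}=O(1/\alpha)$ from the finite hitting time of the ODE. Your additional quantitative work for \eqref{eq:l32} (the explicit choice $\varepsilon_\alpha = 1/\log\alpha$ together with the Gr\"onwall/variance control near the stable fixed point) is more than is required: since $\alpha T_{2\alpha(1-\varepsilon)}$ is tight for each fixed $\varepsilon$, the existence of some sequence $\varepsilon_\alpha\downarrow 0$ with $T_{2\alpha(1-\varepsilon_\alpha)}-T_{2\alpha\varepsilon_\alpha}=o(\log\alpha/\alpha)$ follows by a standard diagonal extraction, which is all the paper asserts.
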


\begin{proof}
  We rescale state and space by setting
  $\mathcal V^\alpha :=(V^\alpha_t)_{t\geq 0} :=
  (L_{t/\alpha}/\alpha)_{t\geq 0}$
  and obtain that
  $\mathcal V^\alpha \xRightarrow{\alpha\to\infty} \mathcal V$ with
  $\mathcal V = (V_t)_{t\geq 0}$ solving
  \begin{align*}
    \frac{dV}{dt} & = V(1- \tfrac V2) - V(1- \tfrac V2) c = V(1- \tfrac V2)(1-c),
  \end{align*}
  and starting in $V_0 = z$. {\color{black} (Here, we use again
    \citealp[Lemma 4.5.1 and Remark 4.5.2]{EthierKurtz1986}}) Since
  this process hits $1-\varepsilon$ by some time of order~1, we find
  that $L$ hits $2\alpha(1-\varepsilon)$ by some time $O(1/\alpha)$.
\end{proof}

\begin{lemma}[Hitting times of sub-critical branching
  process\label{l:5}]
  Let $z, c', c>0$, $\gamma\in[0,1), p\in (\gamma,1]$,
  $\varepsilon \in (0,c^2/p \wedge c)$, and
  ${\mathcal L} = (L_t)_{t\geq 0}$ be a birth-death process with birth
  rate ${b_k} = \alpha k$ and death rate ${d_k}$ such that
  $ |d_k - \alpha(1+c) k| \leq \varepsilon\alpha k$, started in
  $L_0 = z\alpha^p$. Moreover, let $T_{n}$ be the first time when
  $L_t=n$. Then,
  $$ \mathbb P\Big(  \Big|\frac{\alpha}{\log\alpha}T_{c'\alpha^\gamma} - \frac{p-\gamma}{c}
  \Big| > 4\varepsilon \Big| L_0 = z\alpha^p \Big)
  \xrightarrow{\alpha\to\infty} 0$$ and
  \begin{align}
    \label{eq:branTS1}
    \mathbb P\Big(  \Big|\frac{\alpha}{\log\alpha}T_0 - \frac{p}{c}
    \Big| > 2\varepsilon\Big| L_0 = z\alpha^p \Big)
    \xrightarrow{\alpha\to\infty} 0.
  \end{align}
\end{lemma}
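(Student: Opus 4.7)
The plan is to sandwich $\mathcal L$ between two pure sub-critical binary branching processes and adapt the rescaling argument from the proof of Lemma~\ref{l:growthBin}. By a standard pathwise monotone coupling (thinning the Poisson rates for death events), it suffices to establish the conclusion when $d_k = \alpha(1 + c \pm \varepsilon) k$ exactly; the case with the larger death rate yields a lower bound on $T_{c'\alpha^\gamma}$, the smaller an upper bound.

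For each extremal process, I would set $Y_t := \log L_{t \log\alpha/\alpha}/\log\alpha$, so that $Y_0 = \log(z \alpha^p)/\log\alpha \to p$. A generator computation identical in structure to the one in the proof of Lemma~\ref{l:growthBin}, but with drift reversed in sign because death dominates birth by $(c \pm \varepsilon)\alpha$, shows that for $f \in \mathcal C^1(\mathbb R)$ the rescaled generator converges to $-(c \pm \varepsilon) f'(y)$. By \citep[Lemma~4.5.1 and Remark~4.5.2]{EthierKurtz1986}, $Y$ converges in distribution on any compact interval strictly before extinction to the deterministic flow $Z(t) = p - (c \pm \varepsilon) t$.

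Since $Z$ is linear and strictly decreasing, it crosses any level $\gamma \in [0, p)$ transversally at the unique time $(p-\gamma)/(c \pm \varepsilon)$. The event $L_t = c' \alpha^\gamma$ translates to $Y_t = \gamma + o(1)$, so by continuous mapping of the first-passage functional, $\alpha T_{c'\alpha^\gamma}/\log\alpha \Rightarrow (p-\gamma)/(c \pm \varepsilon)$. The assumptions $\varepsilon < c^2/p \wedge c$ are invoked to bound the deviation
\[
\Big| \tfrac{p-\gamma}{c \pm \varepsilon} - \tfrac{p-\gamma}{c} \Big| = \tfrac{\varepsilon (p-\gamma)}{c(c \pm \varepsilon)}
\]
by $4\varepsilon$, which yields the first claim. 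For \eqref{eq:branTS1}, I would decompose $T_0 = T_{c'} + (T_0 - T_{c'})$ for a fixed constant $c' > 0$, applying the first part with $\gamma = 0$ to conclude that $\alpha T_{c'}/\log\alpha$ is within $2\varepsilon$ of $p/c$; the remainder $T_0 - T_{c'}$ is the extinction time of a sub-critical branching process starting from a bounded state with rates of order $\alpha$, hence of order $1/\alpha$, which is negligible after the $\alpha/\log\alpha$ rescaling.

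The main technical obstacle is the continuous-mapping step for the hitting time, which requires ruling out anomalously large downward excursions of $Y$ near the target level $\gamma$. This is standard here because the jumps of $\log L$ are of size $|\log(1 \pm 1/L_t)|$, which remains $O(\alpha^{-\gamma})$ as long as $L_t \geq c' \alpha^\gamma$; hence the fluctuation of $Y$ stays $o(1)$ up to $T_{c'\alpha^\gamma}$, and the standard Skorokhod argument for first-passage time convergence applies. The coupling step in the first paragraph is the other point requiring a bit of care, but it reduces to a routine construction since the death rates of $\mathcal L$ and the extremal processes differ by at most $\varepsilon \alpha k$, which can be accommodated by an additional independent Poisson killing.
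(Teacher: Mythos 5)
Your argument is sound and reaches the same conclusions, but by a genuinely different route than the paper, so a comparison is worthwhile. The paper uses the same sandwich $S^{c+\varepsilon}_{z\alpha^p}\leq T_0\leq S^{c-\varepsilon}_{z\alpha^p}$ by exact subcritical branching processes, but it then proves \eqref{eq:branTS1} \emph{first}, via the explicit extinction-time law from \cite[Chapter~V~(3.4)]{Harris1963}: with $f(t)=\mathbb P(S_1^c>t)=c/((1+c)e^{t\alpha c}-1)$ and $g_m(t)=1-(1-f(t))^m$ one reads off directly that $\tfrac{\alpha}{\log\alpha}S^c_{z\alpha^p}$ concentrates at $p/c$; the statement about $T_{c'\alpha^\gamma}$ is then deduced in the direction opposite to yours, from the Markov-property decomposition $T_0=T_{c'\alpha^\gamma}+\widetilde T_0$ with $\widetilde T_0$ the independent extinction time started from $c'\alpha^\gamma$ (which concentrates at $\gamma/c$), whence the $4\varepsilon$ tolerance as the sum of two $2\varepsilon$-tolerances. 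You instead prove the hitting-time statement directly by extending the generator/ODE-scaling argument of Lemma~\ref{l:growthBin} to the subcritical case together with a first-passage continuous-mapping step, and then obtain \eqref{eq:branTS1} by descending to a constant level and adding an $O(1/\alpha)$ extinction tail (that tail estimate is correct, by exactly the Harris-type bound). The paper's route buys a short proof in which the exact formula handles the low-occupancy end of the path for free, with no fluctuation control needed; your route buys robustness (no exact solvability of the linear birth--death process is used, the perturbed rates are treated directly) and stylistic consistency with Lemmas~\ref{l:growthBin} and~\ref{l:3}, at the price of justifying the generator convergence near level $\gamma=0$, where $L_t$ is of constant order and the uniform-on-compacts convergence of the rescaled generator degenerates; a cleaner patch there is to stop at level $\alpha^{\delta}$ and bound the remaining time by $O(\delta\log\alpha/\alpha)$. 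Two bookkeeping caveats: deriving \eqref{eq:branTS1} ``from the first part with $\gamma=0$'' only yields the $4\varepsilon$ tolerance as that part is stated, so you should quote your underlying flow estimate instead; and your bias term $\varepsilon(p-\gamma)/(c(c\mp\varepsilon))$ is not actually forced below $2\varepsilon$ (or $4\varepsilon$) by the hypothesis $\varepsilon<c^2/p\wedge c$ alone --- but the paper's own proof performs exactly the same step, so this is a defect of the lemma's constants rather than of your argument.
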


\begin{proof}
  It suffices to show \eqref{eq:branTS1}, since the hitting time of~0
  is (by the Markov property) the independent sum of the hitting time
  of $c'\alpha^\gamma$ and the extinction time, if the process is
  started in $c'\alpha^\gamma$.

  Define $S_{m}^c$ be the extinction time of a branching process
  $\mathcal M = (M_t)_{t\geq 0}$ with (individual) branching rate
  $\alpha$ and death rate $\alpha(1+c)$, when started in $M_0=m$.
  Then, from classical theory (see e.g.\
  \cite[Chapter~V~(3.4)]{Harris1963}) it follows, that
  \begin{align*}
    f(t) & :=\mathbb P(S_1^c>t) =\frac{c}{(1+c) e^{t\alpha c} -
           1},\\
    g_m(t) & := \mathbb P(S^c_m>t) = 1 - (1-f(t))^m.
  \end{align*}
  Hence, for any $\varepsilon>0$,
  \begin{equation}
    \label{eq:2198}
    \begin{aligned}
      \mathbb{P} \Bigg( \frac{\alpha} {\log \alpha} S^c_{z\alpha^p} -
      \frac{p}{c} > \varepsilon \Bigg) & = g_{z\alpha^p} \Bigg(
      \frac{\log \alpha}{ \alpha} \Bigg( \frac{p}{c} + \varepsilon
      \Bigg) \Bigg)
      \rightarrow 0,\\
      \mathbb{P} \Bigg( \frac{\alpha} {\log \alpha} S^c_{z\alpha^p} -
      \frac{p}{c} < - \varepsilon \Bigg) & = 1- g_{z\alpha^p} \Bigg(
      \frac{ \log \alpha}{ \alpha} \Bigg( \frac{p}{c} - \varepsilon
      \Bigg) \Bigg) \rightarrow 0.
    \end{aligned}
  \end{equation}
  Stochastically,
  $S_{z\alpha^p}^{c+\varepsilon} \leq T_{z\alpha^p} \leq
  S_{z\alpha^p}^{c-\varepsilon}$
  and hence, for $\varepsilon\leq c^2/p$
  \begin{align*}
    \mathbb P\Big( \frac{\alpha}{\log\alpha}T_{z\alpha^p} - \frac pc < -
    2\varepsilon\Big) 
    & \leq \mathbb P\Big(
      \frac{\alpha}{\log\alpha}S^{c+\varepsilon}_{z\alpha^p} -
      \frac p{c+\varepsilon}< -
      {{2}\varepsilon +
      \frac{p\varepsilon/c}{c+\varepsilon}}\Big)
      \xrightarrow{\alpha\to\infty} 0 
      \intertext{as well as} 
      \mathbb P\Big( \frac{\alpha}{\log\alpha}T_{z\alpha^p} - \frac pc >
      {2}\varepsilon\Big) 
    & \leq \mathbb P\Big(
      \frac{\alpha}{\log\alpha}S^{c-\varepsilon}_{z\alpha^p} -
      {\frac{p}{c-\varepsilon}} >
      {2\varepsilon -
      \frac{p\varepsilon/c}{c-\varepsilon}}\Big)
      \xrightarrow{\alpha\to\infty} 0
  \end{align*}
  by~\eqref{eq:2198} and we are done.
\end{proof}

\subsection{Results on non-homogeneous birth-death processes}
\cite{Kendall1948} studied time-inhomogeneous birth-death processes;
see also \cite{Bailey1990} and \cite{Allen2013}. We recall a result on
the generating function of such a process. In order to be
self-contained, we also give the proof.

\begin{apptheorem}[Time-inhomogeneous birth-death
  process\label{T:tibd}]
  Let $\mathcal L = (L_t)_{t\geq 0}$ be a birth-death process with
  time-inhomogeneous birth rates
  $\lambda_k(t) = \lambda_t k + \gamma_t$ {\color{black} for $k\geq 0$
    and $\lambda_k(t)=0$ for $k<0$}, and death rates
  $\mu_k(t) = \mu_t k${\color{black}, $k=0,1,2,...$} If $L_0 = \ell$,
  the generating function
  $g_t(z) := \sum_{k=0}^\infty \mathbb P(L_t=k) z^k$ satisfies
  \begin{equation}
    \label{eq:gtz}
    \begin{aligned}
      g_t(z) & = \left(1 - \left(\frac{\exp\Big(\int_{0}^t (\mu_r-
            \lambda_r) {\color{black} dr}\Big)}{1-z} +
          \int_{0}^t \lambda_r \exp\Big(\int_{0}^r (\mu_u - \lambda_u)
          du\Big)dr\right)^{-1}\right)^\ell \\ & \quad \cdot
      \exp\left( - \int_0^t \gamma_{s} \left(\frac{\exp\Big(\int_{s}^t
            (\mu_r- \lambda_r) dr\Big)}{1-z} + \int_{s}^t \lambda_r
          \exp\Big(\int_{s}^r (\mu_u - \lambda_u)
          du\Big)dr\right)^{-1} ds \right).
    \end{aligned}
  \end{equation}
  In particular, the probability that the process eventually goes
  extinct, starting with $\ell=0$, is given by
  \begin{align}\label{eq:surv}
    \mathbb P(L_t = 0) & = \exp\left(
                         - \int_0^t \gamma_{s}\left(
                         1 +  \int_{s}^t \mu_r
                         e^{\int_{s}^r (\mu_u - \lambda_u) du}dr
                         \right)^{-1}ds\right).
  \end{align}
\end{apptheorem}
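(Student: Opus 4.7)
The plan is to derive the PDE satisfied by $g_t(z)$ from the Kolmogorov forward equations, solve it by the method of characteristics with a standard substitution that linearizes the characteristic ODE, and then read off the formulas.

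First, I would write down the forward equations
$$\tfrac{d}{dt}\mathbb P(L_t=k) = \lambda_{k-1}(t)\mathbb P(L_t=k-1) + \mu_{k+1}(t)\mathbb P(L_t=k+1) - (\lambda_k(t)+\mu_k(t))\mathbb P(L_t=k),$$
multiply by $z^k$ and sum over $k\geq 0$. With $\lambda_k(t)=\lambda_t k+\gamma_t$ and $\mu_k(t)=\mu_t k$, routine manipulations of the resulting power series (shift the summation index in the gain terms, use $\sum_k z^k k p_k = z g'$, $\sum_k z^k (k+1)p_{k+1}=g'$) give the first-order linear PDE
$$\partial_t g_t(z) = (\lambda_t z - \mu_t)(z-1)\,\partial_z g_t(z) + \gamma_t(z-1)g_t(z),$$
with initial datum $g_0(z)=z^\ell$.

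Next, I apply the method of characteristics. Along a characteristic curve $t\mapsto z(t)$ we need $\dot z = -(\lambda_t z-\mu_t)(z-1)$, and then $\dot g = \gamma_t (z-1) g$. The quadratic in $z$ is handled by the substitution $v(t) := 1/(1-z(t))$, which converts the Riccati-type equation into the linear ODE
$$\dot v(t) + (\mu_t - \lambda_t)\,v(t) = -\lambda_t.$$
Using the integrating factor $\exp(\int_0^t(\mu_r-\lambda_r)\,dr)$ and integrating from $s$ to $t$ yields
$$v(s) = v(t)\,e^{\int_s^t(\mu_r-\lambda_r)\,dr} + \int_s^t \lambda_r\, e^{\int_s^r(\mu_u-\lambda_u)\,du}\,dr,$$
which, with $v(t)=1/(1-z)$, is precisely the reciprocal of the bracketed expression appearing twice in \eqref{eq:gtz}.

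Along the characteristic ending at $(t,z)$ one has $\dot g/g = \gamma_t(z-1) = -\gamma_t/v(t)$, so integrating from $0$ to $t$ and using the initial condition $g(0,z(0)) = z(0)^\ell = (1-1/v(0))^\ell$ gives
$$g_t(z) = \bigl(1 - 1/v(0)\bigr)^{\ell}\,\exp\Bigl(-\int_0^t \gamma_s/v(s)\,ds\Bigr),$$
which is \eqref{eq:gtz} after substituting the explicit expressions for $v(0)$ and $v(s)$. For the extinction probability I would set $\ell=0$ and $z=0$, giving $v(s)|_{z=0} = e^{\int_s^t(\mu-\lambda)\,dr} + \int_s^t\lambda_r e^{\int_s^r(\mu-\lambda)\,du}\,dr$, and then use the identity $e^{\int_s^t(\mu-\lambda)\,dr} = 1 + \int_s^t(\mu_r-\lambda_r)\,e^{\int_s^r(\mu-\lambda)\,du}\,dr$ (fundamental theorem of calculus) to replace the $\lambda_r$ by $\mu_r$ inside the remaining integral and obtain \eqref{eq:surv}. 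The main obstacle is purely bookkeeping: correctly identifying the linearizing substitution $v=1/(1-z)$ and keeping track of the two integration endpoints ($0$ for the polynomial factor coming from the initial datum, $s$ for the exponential factor coming from the immigration rate $\gamma_s$); no delicate estimate or limit argument is required.
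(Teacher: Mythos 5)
Your proposal is correct and follows essentially the same route as the paper: both derive the PDE $\partial_t g = \big(-\lambda_t z(1-z)+\mu_t(1-z)\big)\partial_z g - \gamma_t(1-z)g$ from the forward equations, solve it along the characteristic ODE $\dot x = -\lambda x(1-x)+\mu(1-x)$ (the paper verifies the explicit solution via a conserved expectation along the time-reversed flow, while you derive it with the linearizing substitution $v=1/(1-z)$ — a difference of presentation only), and both obtain \eqref{eq:surv} from the same fundamental-theorem-of-calculus identity replacing $\lambda_r$ by $\mu_r$ in the integral.
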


\begin{proof}
  We write, using $p_k(t) := \mathbb P(L_t=k)$,
  \begin{equation}
    \label{eq:char}
    \begin{aligned}
      \frac{\partial g_t(z)}{\partial t} & = \frac{\partial}{\partial
        t} \mathbb E[z^{L_t}]=
      {\color{black}\sum_{k=0}^\infty}\big(((\lambda_t(k-1) + \gamma_t)
      p_t(k-1) + \mu_t(k+1)p_{k+1}(t) \\ & \qquad \qquad \qquad \qquad
      \qquad \qquad \qquad - ((\lambda_t + \mu_t)k +
      \gamma_t)p_k(t)\big)z^k \\ & = \big(-\lambda_t z(1-z) + \mu_t(1
      - z)\big) \frac{\partial g_t(z)}{\partial z} - \gamma_t(1-z)
      g_t(z).
    \end{aligned}    
  \end{equation}
  Therefore, if $x$ solves the ODE (with $s\in[0,t]$)
  \begin{align}
    \label{eq:ODE}
    \frac{d}{ds} x_s = -\lambda_{t-s} x_s(1-x_s) + \mu_{t-s}(1 - x_s) \text{   with $x_0=z$,}
  \end{align}
  we find that
  \begin{align*}
    \frac{d}{ds} & \mathbb E\Big[x_{t-s}^{L_s} \exp\Big(-\int_0^{t-s} \gamma_{t-u}(1-x_{u})du\Big)\Big] 
    \\ & = \mathbb E\Big[\Big(\big(-\lambda_s x_{t-s}(1-x_{t-s}) +
         \mu_s(1 - x_{t-s})\big) \frac{\partial}{\partial z} z^{L_s}\Big|_{z=x_{t-s}}
         - \gamma_s(1-x_{t-s}) x_{t-s}^{L_s}
    \\ & \qquad - \frac{\partial}{\partial z} z^{L_s}\Big|_{z=x_{t-s}} (-\lambda_s x_{t-s}(1-x_{t-s}) + \mu_s(1-x_{t-s})) 
    \\ & \qquad \qquad \qquad \qquad \qquad \qquad \qquad + x_{t-s}^{L_s} \gamma_{s}(1-x_{t-s})\Big)
         \exp\Big(-\int_0^{t-s} \gamma_{t-u}(1-x_u)du\Big)\Big]
    \\ & = 0.
  \end{align*}
  Thus, the solution of~\eqref{eq:char} is given by
  \begin{align*}
    g_t(z) & = g_0(x_t) \exp\Big(-\int_0^t \gamma_{t-s}(1-x_s)ds\Big).
  \end{align*}
  Since 
  \begin{align*}
    x_s & = 1 - \left(\frac{\exp\Big(\int_{t-s}^t (\mu_r- \lambda_r) {\color{black}dr}\Big)}{1-z} + 
          \int_{t-s}^t \lambda_r \exp\Big(\int_{t-s}^r (\mu_u - \lambda_u) du\Big)dr\right)^{-1}
  \end{align*}
  solves~\eqref{eq:ODE}, we find~\eqref{eq:gtz}. Then, \eqref{eq:surv}
  arises for $z=0$, since
  \begin{equation}
    \label{eq:4321}
    \begin{aligned}
      e^{\int_{t-s}^t (\mu_r- \lambda_r) dr} & + \int_{t-s}^t
      \lambda_r e^{\int_{t-s}^r (\mu_u - \lambda_u) du}dr \\ & =
      e^{\int_{t-s}^t (\mu_r- \lambda_r) dr} + \int_{t-s}^t \mu_r
      e^{\int_{t-s}^r (\mu_u - \lambda_u) du}dr -\int_{t-s}^t (\mu_r -
      \lambda_r) e^{\int_{t-s}^r (\mu_u - \lambda_u) du} dr \\ & = 1 +
      \int_{t-s}^t \mu_r e^{\int_{t-s}^r (\mu_u - \lambda_u) du}dr.
    \end{aligned}
  \end{equation}
\end{proof}

\begin{proposition}[Survival of a non-homogeneous branching process
  with immigration\label{P:49}]
  Let $0<c_1<c_2<1$ and $\rho>0$ and let $y_t$ solve
  $dy = (c_2-c_1)y(1-y)$ with $y_0=1/2$. Let
  $\mathcal L = (L(t))_{t\in\mathbb R}$ be a binary non-homogeneous
  branching process with $L_{-\infty} =0$, such that every individual
  gives birth at rate $\lambda = 1$, dies at rate
  \begin{align*}
    \mu_t = c_1 (1-y_t) + c_2 y_t = y_t(c_2-c_1) + c_1,    
  \end{align*}
  and immmigration rate $\gamma_t = \rho y_t(1-y_t)$. Then, the
  probability that the process survives is
  \begin{align*}
    \mathbb P[L_\infty > 0] & = 1 - \Big(\frac{1-c_1}{1-c_2}\Big)^{-\rho\frac{(1-c_1)(1-c_2)}{(c_2-c_1)^2}}.
  \end{align*}
\end{proposition}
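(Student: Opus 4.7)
The plan is to exploit the Poisson structure of the immigration. Since immigrants arrive as an inhomogeneous Poisson process with intensity $\gamma_s$ and the descendant family of each immigrant evolves independently, one has
\begin{align*}
\mathbb P[L_\infty = 0] = \exp\Big(-\int_{-\infty}^\infty \gamma_s\, p_s\, ds\Big),
\end{align*}
where $p_s$ is the probability that the family started by a single particle born at time $s$ survives forever. This identity also drops out of Theorem~\ref{T:tibd} applied with $\ell=0$ upon sending the starting time to $-\infty$ and the end time to $+\infty$: the inner factor $\bigl(1+\int_s^t\mu_r e^{\int_s^r(\mu_u-\lambda_u)du}dr\bigr)^{-1}$ converges to $p_s$, which one recognises as $\lim_{t\to\infty}\mathbb P(L_t\geq 1\mid L_s=1)$ via a second use of Theorem~\ref{T:tibd} with $\ell=1,\gamma\equiv 0$.

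The next step is to solve the ODE for $p_s$. A standard branching-process computation (or differentiation of the above formula) gives the Riccati equation
\begin{align*}
\frac{dp_s}{ds} = -p_s\big[(1-\mu_s)-p_s\big].
\end{align*}
Because $s\mapsto y_s$ is a diffeomorphism $\mathbb R\to(0,1)$, I would switch to $y$ as the independent variable. Introducing the abbreviations $a:=(1-c_1)/(c_2-c_1)$ and $b:=(1-c_2)/(c_2-c_1)$ (so that $a-b=1$ and $1-\mu_s=(c_2-c_1)[a(1-y)+by]$), and setting $\phi(y):=p_s/(c_2-c_1)$, the ODE becomes
\begin{align*}
y(1-y)\phi'(y) = \phi(y)^2 - \phi(y)\bigl[a(1-y)+by\bigr], \qquad \phi(0)=a,\ \phi(1)=b.
\end{align*}
The crucial observation is the closed-form rational solution
\begin{align*}
\phi(y) = \frac{ab}{ay+b(1-y)} = \frac{ab}{b+y},
\end{align*}
which I would verify by direct substitution, using $a(1-y)+by=a-y$ and $(a-y)(b+y)=ab+y(1-y)$ (both consequences of $a-b=1$). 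The same formula emerges from the classical Bernoulli-to-linear transformation $\phi=1/\psi$, which reduces the ODE to $\psi'-(a/y+b/(1-y))\psi=-1/(y(1-y))$ with unique bounded solution $\psi(y)=(b+y)/(ab)$.

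With the explicit $\phi$ in hand, the change of variables $y=y_s$, $ds=dy/((c_2-c_1)y(1-y))$, collapses the integral to
\begin{align*}
\int_{-\infty}^\infty \gamma_s\,p_s\,ds = \rho\int_0^1 \phi(y)\,dy = ab\rho\int_0^1\frac{dy}{b+y} = \rho\,\frac{(1-c_1)(1-c_2)}{(c_2-c_1)^2}\,\log\frac{1-c_1}{1-c_2},
\end{align*}
and exponentiating produces the stated formula for $\mathbb P[L_\infty>0]=1-\mathbb P[L_\infty=0]$. The hardest step is finding the explicit form $\phi(y)=ab/(b+y)$: without this simplification, the integral representation coming straight out of Theorem~\ref{T:tibd} reduces only to an incomplete beta function. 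What makes the answer elementary is the algebraic identity $(a-y)(b+y)=ab+y(1-y)$, which holds precisely because $a-b=1$.
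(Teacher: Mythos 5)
Your proposal is correct: the survival-probability formula, the Riccati equation $\frac{d p_s}{ds}=-p_s[(1-\mu_s)-p_s]$, the rational solution $\phi(y)=ab/(b+y)$ (which indeed satisfies $y(1-y)\phi'=\phi^2-\phi(a-y)$ thanks to $(a-y)(b+y)=ab+y(1-y)$), and the final integral $\rho\int_0^1\phi(y)\,dy=\rho ab\log(a/b)$ all check out, and $(c_2-c_1)\phi(y_s)$ agrees with the paper's expression $\bigl(\tfrac{y_s}{1-c_2}+\tfrac{1-y_s}{1-c_1}\bigr)^{-1}$ for the per-immigrant survival probability. The overall skeleton is the same as the paper's: both start from the identity $-\log\mathbb P(L_\infty=0)=\int\gamma_s p_s\,ds$ (which is exactly \eqref{eq:surv} of Theorem~\ref{T:tibd}, as you note), and both finish with the substitution $s\mapsto y_s$ and the elementary integral over $[0,1]$. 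Where you genuinely diverge is the middle step: the paper computes $p_s$ by brute force, evaluating $\int_s^t\exp\bigl(-\int_s^r(1-c_1-y_u(c_2-c_1))du\bigr)dr$ explicitly with the logistic closed form $y_r=(1+e^{-(c_2-c_1)r})^{-1}$ and letting $t\to\infty$ (so the computation is elementary, not an incomplete beta function as you suggest), whereas you characterize $p_s$ through the backward Riccati equation, pass to $y$ as the independent variable, and verify a rational ansatz. Your route is arguably more conceptual and avoids exponential integrals, but it carries one obligation the paper's direct computation does not: you must argue that the verified solution is \emph{the} survival probability, i.e.\ uniqueness of the bounded solution with the correct behaviour as $y\to1$ (equivalently $s\to+\infty$). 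Your Bernoulli transformation in fact supplies this, since the homogeneous solutions $Cy^a(1-y)^{-b}$ of the linear equation for $\psi=1/\phi$ blow up at $y=1$, so it would be worth making that remark explicit rather than only asserting the two boundary values $\phi(0)=a$, $\phi(1)=b$.
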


\begin{proof}
  {\color{black}We directly use \eqref{eq:gtz} with $\ell=0$ and $z=0$
    from Theorem ~\ref{T:tibd} and obtain} 
  \begin{align*}
    - \log & \mathbb P(L_\infty = 0) = \lim_{t\to\infty}\int_{-t}^t 
             \rho y_s(1-y_s) 
    \\ & \qquad \qquad \qquad \qquad \cdot \Big(
         \exp\Big(-\int_s^t (1-c_1 - y_r(c_2-c_1))dr\Big) 
    \\ & \qquad \qquad \qquad \qquad \qquad + \int_s^t
         \exp\Big(-\int_s^r  (1-c_1 - y_u(c_2-c_1))du\Big)dr\Big)^{-1} ds\Big).    
  \end{align*}
  Since
  \begin{align*}
    \int_{s}^r (1-c_1 - y_u(c_2-c_1)) du 
    & = (1-c_1)(r-s) - \int_{y_s}^{y_r} \frac{1}{1-z}dz 
    \\ & = (1-c_1)(r-s) + \log\Big(\frac{1-y_r}{1-y_s}\Big)
  \end{align*}
  and $y_r = \frac{1}{1 + e^{-(c_2-c_1)r}}$, we find
  \begin{align*}
    \int_{s}^t & \exp\Big(-\int_{s}^r (1-c_1-y_u(c_2-c_1) )du\Big)dr
                 = (1-y_s)\int_{s}^t e^{-(1-c_1)(r-s)} \frac{1}{1-y_r}dr
    \\ & = (1-y_s) e^{(1-c_1)s}\int_{s}^t e^{-(1-c_1)r}\frac{1 + e^{-(c_2-c_1)r}}{e^{-(c_2-c_1)r}}dr
    \\ & =  (1-y_s) e^{(1-c_1)s}\int_{s}^t e^{-(1-c_2)r} + e^{-(1-c_1)r}dr
    \\ & =  (1-y_s) e^{(1-c_1)s}\Big(\frac{1}{1-c_2} (e^{-(1-c_2)s} -e^{-(1-c_2)t}) 
         + \frac{1}{1-c_1}((e^{-(1-c_1)s} -e^{-(1-c_1)t}) )\Big)
    \\ & \xrightarrow{t\to\infty}(1-y_s)\Big(\frac{1}{1-c_2} e^{(c_2-c_1)s} 
         + \frac{1}{1-c_1}\Big) = \frac{1}{1-c_2}y_s + \frac{1}{1-c_1}(1-y_s).
  \end{align*}
  Therefore, 
  \begin{align*}
    - \log & \mathbb P(L_\infty = 0) = \lim_{t\to\infty}\int_{-t}^t \rho y_s(1-y_s) 
    \\ & \qquad \qquad \qquad \qquad \cdot      
         \Big(\Big(\frac{1-y_s}{1-y_t}\Big)^{- (1-c_1)(t-s)} 
         + \frac{1}{1-c_2}y_s  + \frac{1}{1-c_1}(1-y_s)\Big)^{-1}ds
    \\ & = \int_{-\infty}^\infty \rho y_s(1-y_s) 
         \frac{(1-c_1)(1-c_2)}{(1-c_1)y_s + (1-c_2)(1-y_s)}ds
    \\ & = \rho\frac{(1-c_1)(1-c_2)}{c_2-c_1} \int_0^1 \frac{1}{(1-c_2) + (c_2-c_1)y}dy
    \\ & = \rho\frac{(1-c_1)(1-c_2)}{(c_2-c_1)^2} \log\frac{1-c_1}{1-c_2}
  \end{align*}
  and we are done.
\end{proof}

\subsection{Some bounds on $\mathcal L$}
Frequently, we will use the stopping times
\begin{align}\label{eq:stoptimes}
  T_k^i := \inf\{t\geq 0: L_i(t) \geq k\}.
\end{align}

\begin{lemma}[Type~3 does not occur before type~2 reaches $\varepsilon\alpha$\label{l:10}]
  Let $\mathcal L$ be as in Definition~\ref{def:L0}. Then, for any
  $\varepsilon_\alpha \xrightarrow{\alpha\to\infty} 0$,
  \begin{align}
    \lim_{\alpha\to\infty} \mathbb P(T_1^3 \geq T_{\varepsilon_\alpha\alpha}^2) = 1.
  \end{align}
\end{lemma}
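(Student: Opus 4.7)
The plan is a first-moment bound on the number of type-$3$ recombinations. In $\mathcal L$ (Definition~\ref{def:L0}) a type-$3$ particle can only appear from a state with $L_3=0$ through the recombination transition $\underline\ell\to\underline\ell+\underline e_3-\underline e_1-\underline e_2$, which fires at rate $\tfrac{\rho}{2\alpha}\ell_1\ell_2$. Let $N_3$ count these jumps, and put $T:=T_1^3\wedge T_{\varepsilon_\alpha\alpha}^2\wedge T_0^2$ with $T_0^2:=\inf\{t:L_2(t)=0\}$. Observe that on $\{T_0^2<T_1^3\wedge T_{\varepsilon_\alpha\alpha}^2\}$ one has $L_2\equiv 0$ throughout $[T_0^2,T_1^3]$ (while $L_3=0$, the only way to create a type-$2$ particle is via the binary branching transition $\underline\ell\to\underline\ell+\underline e_2$, which requires $L_2>0$); consequently $T_1^3=\infty$ on that event, and the compensator formula gives
\begin{align*}
\mathbb P\bigl(T_1^3<T_{\varepsilon_\alpha\alpha}^2\bigr)\;=\;\mathbb E[N_3(T)]\;=\;\mathbb E\!\left[\int_0^{T}\tfrac{\rho}{2\alpha}L_1(s)L_2(s)\,ds\right],
\end{align*}
so it suffices to show the right-hand side vanishes.

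To control $L_1$ I apply Lemma~\ref{l:growthBin} to $L_2$ (whose per-particle birth rate is $\alpha$ and whose per-particle death rate on $[0,T]$ is at most $(1-(c_2-c_1)/2)\alpha$): this gives $T\le c\log\alpha/\alpha\to 0$ with probability $1-o(1)$. Lemma~\ref{l:conc} then supplies an event $E$ with $\mathbb P(E^c)=o(1)$ on which $L_0+L_1+L_2+L_3\le(2+\eta)\alpha$ throughout $[0,T]$, for any fixed $\eta>0$. On $E$ I bound $L_1\le(2+\eta)\alpha$, while $E^c$ contributes at most $\mathbb P(E^c)=o(1)$ to $\mathbb E[N_3(T)]$ since $N_3(T)\le 1$.

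The core estimate is $\mathbb E[\mathbf1_E\int_0^TL_2(s)\,ds]\le\varepsilon_\alpha/c'$ for $c':=(c_2-c_1)/3$, via a submartingale argument. Computing the drift of $L_2$ directly from the rates in Definition~\ref{def:L0}, using $L_3\equiv 0$ on $[0,T]$ and $L_0+L_1\le(2+\eta)\alpha$ on $E$, yields
\begin{align*}
\alpha\ell_2-\tbinom{\ell_2}{2}-\tfrac{1}{2}\ell_2\bigl((\ell_0+\ell_1)(1-c_2)+\ell_1c_1\bigr)-\tfrac{\rho}{\alpha}\ell_1\ell_2\;\ge\;\alpha\ell_2\bigl[1-(1+\tfrac{\eta}{2})(1-c_2+c_1)\bigr]-O(\ell_2)\;\ge\;c'\alpha\ell_2
\end{align*}
for $\eta$ small and $\alpha$ large, since $1-(1-c_2+c_1)=c_2-c_1$. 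Hence $M_t:=L_2(t)-c'\alpha\int_0^tL_2(s)\,ds$ is a submartingale on the stopped interval, and Doob's optional stopping at $T$ yields $c'\alpha\,\mathbb E[\mathbf1_E\int_0^TL_2\,ds]\le\mathbb E[\mathbf1_EL_2(T)]\le\varepsilon_\alpha\alpha$. Putting everything together,
\begin{align*}
\mathbb P(T_1^3<T_{\varepsilon_\alpha\alpha}^2)\;\le\;\tfrac{\rho(2+\eta)}{2c'}\varepsilon_\alpha+o(1)\;\xrightarrow{\alpha\to\infty}\;0.
\end{align*}

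The main obstacle is the drift estimate together with the clean application of optional stopping on the truncated event $E$: the coalescence, selection, and recombination contributions in the generator of $\mathcal L$ must all be combined, and the restriction to the population-bounded event handled carefully. Once the concentration bound from Lemma~\ref{l:conc} is in hand, however, the remaining bookkeeping is routine.
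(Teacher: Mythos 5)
Your argument is essentially the paper's own proof: the paper likewise reduces the statement to bounding $\mathbb E\bigl[\int_0^{T^2_{\varepsilon_\alpha\alpha}}\tfrac{\rho}{2\alpha}L_1(s)L_2(s)\,ds\bigr]$, controls $L_1$ by the total-population concentration of Lemma~\ref{l:conc}, and bounds $\mathbb E\bigl[\int_0^{T^2_{\varepsilon_\alpha\alpha}}L_2(s)\,ds\bigr]\le \varepsilon_\alpha\alpha/(c'\alpha)$ by exactly the submartingale/optional-stopping computation you redo inline (this is the paper's Lemma~\ref{l:9}), the only cosmetic difference being that you pass through the first moment of the counting process while the paper writes $\mathbb P(T_1^3\ge T^2_{\varepsilon_\alpha\alpha})=\mathbb E[e^{-\int\cdot}]$ and applies Jensen. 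Your handling of the extinction time $T_0^2$ and of the time cutoff needed to invoke Lemma~\ref{l:conc} is, if anything, slightly more careful than the paper's; the small slips (writing the $\binom{\ell_2}{2}$ contribution as $O(\ell_2)$ rather than $o(\alpha)\ell_2$, and citing Lemma~\ref{l:growthBin} where Corollary~\ref{c3} fits the start $L_2(0)=1$) do not affect the conclusion.
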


\begin{proof}
  Type-3-particles only arise by an event of rate
  $\ell_1 \ell_2 \rho/(2\alpha)$, so the time $T_1^3$ is the same if
  we ignore the decrease in type-2-particles due to this event. In
  other words, we consider the rates of in- and decrease of
  type-2-particles, given that $\ell_2 \leq \varepsilon_\alpha \alpha$
  and $\ell_0 + \ell_1 + \ell_2 \leq 2\alpha(1+\delta)$ (see
  Lemma~\ref{l:conc})
  \begin{align*}
    r_2^+ & := \alpha \ell_2,\\
    r_2^- & := \binom{\ell_2}{2} 
            + \frac 12 \ell_2 \ell_1 (1 - c_2 +
            c_1) + \frac 12 \ell_2 \ell_0 (1 - c_2)
    \\ & \leq \frac 12 \ell_2\big( (1 - c_2 + c_1)(\ell_0 + \ell_1 + \ell_2) 
         + (c_2 - c_1)\varepsilon_\alpha \alpha\big) 
    \\ & \stackrel{\alpha\to\infty}\lesssim (1 - c_2 + c_1)(1+\delta)\alpha \ell_2.
  \end{align*}
  Hence, using Lemma~\ref{l:conc} and Lemma~\ref{l:9}, and $\delta>0$
  such that $(1-c_2+c_1)(1+\delta) < 1$,
  \begin{align*}
    \mathbb P(T_1^3 \geq T_{\varepsilon_\alpha\alpha}^2) 
    & 
      = \mathbb E\Big[ \exp\Big( - \int_0^{T_{\varepsilon_\alpha \alpha}^2} 
      \frac{\rho}{2\alpha} L_1(t) L_2(t) dt\Big)\Big] 
    \\ & \geq \exp\Big( -  \mathbb E\Big[\int_0^{T_{\varepsilon_\alpha\alpha}^2} 
         \frac{\rho}{2\alpha} L_1(t) L_2(t) dt\Big]\Big)
    \\ & \geq \exp\Big( - \mathbb E\Big[\int_0^{T_{\varepsilon_\alpha\alpha}^2} 
         \rho (1+\delta) L_2(t) dt\Big]\Big)
    \\ & \geq  \exp\Big( - \frac{\rho (1+\delta)}{\alpha(1-(1-c_2+c_1)(1+\delta))}
         \varepsilon_\alpha\alpha\Big) \stackrel{\alpha\to\infty}\approx 1.
  \end{align*}
\end{proof}

\begin{lemma}[Occupation time of birth-death process\label{l:9}]
  Let $(L(t))_{t\geq 0}$ be a birth-death-process with birth rate
  $\lambda_i \geq ai$ and death rate $\mu_i \leq bi$. If $a>b$ and
  $L(0) = k$, for $T_\ell := \inf\{t: L(t)=\ell\}$ and $\ell>k$,
  $$ \mathbb E\Big[ \int_0^{T_\ell} L(s)ds\Big] \leq \frac{\ell - k}{a-b}.$$
\end{lemma}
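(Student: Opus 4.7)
The plan is to use the standard Dynkin martingale for a continuous-time Markov chain. For a birth-death process with the given rates, the compensator of $L$ is $\int_0^t(\lambda_{L(s)}-\mu_{L(s)})\,ds$, so
\[
M_t := L(t) - \int_0^t(\lambda_{L(s)}-\mu_{L(s)})\,ds
\]
is a local martingale with $M_0 = k$. The bounds $\lambda_i\geq ai$ and $\mu_i\leq bi$ immediately yield $\lambda_i-\mu_i \geq (a-b)i \geq 0$, which is the only drift information I will need.

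I would now apply optional stopping at the bounded time $T_\ell\wedge n$ for $n\in\mathbb N$. Since $L$ has $\pm 1$ jumps and $L(t\wedge T_\ell)\leq \ell$, the stopped process is bounded, so $M^{T_\ell\wedge n}$ is a genuine (uniformly integrable) martingale on $[0,n]$ and we get
\[
k = \mathbb E[M_{T_\ell\wedge n}] = \mathbb E[L(T_\ell\wedge n)] - \mathbb E\!\left[\int_0^{T_\ell\wedge n}(\lambda_{L(s)}-\mu_{L(s)})\,ds\right].
\]
Using $L(T_\ell\wedge n)\leq \ell$ on the left and $\lambda_{L(s)}-\mu_{L(s)}\geq (a-b)L(s)$ on the right, this rearranges to
\[
(a-b)\,\mathbb E\!\left[\int_0^{T_\ell\wedge n} L(s)\,ds\right] \leq \ell - k.
\]

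Finally I would send $n\to\infty$. The integrand $L(s)$ is nonnegative, so monotone convergence gives $\mathbb E[\int_0^{T_\ell\wedge n} L(s)\,ds] \uparrow \mathbb E[\int_0^{T_\ell} L(s)\,ds]$, and dividing by $a-b>0$ yields the claimed bound $\mathbb E[\int_0^{T_\ell}L(s)\,ds]\leq (\ell-k)/(a-b)$. Note that nothing in this argument requires $T_\ell<\infty$ a.s.: on $\{T_\ell=\infty\}$ the process must go extinct (since the path is either bounded below $\ell$ forever or hits $\ell$), and in that case the integral is automatically finite and the bound still holds.

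The main (minor) obstacle is a clean justification of the optional stopping step, which I would handle exactly as above by localizing with $T_\ell\wedge n$ and using boundedness of $L$ prior to $T_\ell$; no technical issue remains beyond monotone convergence of the nonnegative occupation integral.
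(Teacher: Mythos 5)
Your proof is correct and follows essentially the same route as the paper: both start from the Dynkin local martingale $L(t)-L(0)-\int_0^t(\lambda_{L(s)}-\mu_{L(s)})\,ds$, use $\lambda_i-\mu_i\geq(a-b)i$ to turn it into a sub-martingale inequality, and apply optional stopping together with $L(T_\ell\wedge \cdot)\leq \ell$. Your localization at $T_\ell\wedge n$ followed by monotone convergence is just a slightly more careful version of the paper's direct optional-stopping step, so there is nothing further to add.
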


\begin{proof}
  We know that
  $$ \Big(L(t) - L(0) - \int_0^t \lambda_{L(s)} - \mu_{L(s)}  ds \Big)_{t\geq 0}$$
  is a (local) martingale, hence,
  \begin{align}
    \label{eq:localSub}
    \Big(L(t) - L(0) - \int_0^t (a-b) L(s)  ds \Big)_{t\geq 0}
  \end{align}
  is a (local) sub-martingale. Using optional stopping {\color{black}
    and the fact that the process in \eqref{eq:localSub}, stopped at
    $T_\ell$, is a true sub-martingale},
  \begin{align*}
    \mathbb E\Big[\int_0^{T_\ell} (a-b) L(s)  ds\Big]
    & \leq \mathbb E[L(T_\ell) - L(0)] 
      = (\ell - k) \mathbb P(T_\ell<\infty) - k\cdot \mathbb P(T_\ell=\infty) \\ & \leq \ell - k.
  \end{align*}
\end{proof}

\begin{proposition}[Rescaling $\mathcal L$ when type~2 takes over\label{P:rescale}]
  Let $\mathcal L$ from Definition~\ref{def:L0} be started with
  \begin{align*}
    L_0(0) & = o(\alpha), \qquad 
             L_1(0) = 2(1-\varepsilon)\alpha + o(\alpha), \qquad 
             L_2(0) = 2\varepsilon\alpha + o(\alpha), \qquad 
             L_3(0)=0.
  \end{align*}
  Moreover, let 
  $$V^\alpha_i(t) := \frac{L_i(t/\alpha)}{\alpha}, i=0,1,2, \qquad V_3^\alpha(t) :=
  L_3(t/\alpha)$$
  and $V = (V_0, V_1, V_2, V_3)$ be a process with $V_0=0$,
  $V_1(0)=2(1-\varepsilon), V_2(0) = 2\varepsilon$ and $V_3(0)=0$,
  such that
  \begin{align*}
    dV_1 = (c_2-c_1)V_1(1-V_1/2)dt, \qquad V_2 = 2-V_1,
  \end{align*}
  and $V_3$ is a time-dependent binary branching process with
  splitting rate~1, death rate $\frac{c_1}{2}V_2 + \frac{c_2}{2} V_1$,
  and immigration rate $\frac{\rho}{2} V_1V_2$. Then,
  \begin{align*}
    (V_0^\alpha, V_1^\alpha, V_2^\alpha, V_3^\alpha)
    \xRightarrow{\alpha\to\infty} (V_0, V_1, V_2, V_3).
  \end{align*}
\end{proposition}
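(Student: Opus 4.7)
The plan is to separate the analysis into two scales. The components $V_0^\alpha, V_1^\alpha, V_2^\alpha$ are macroscopic (of order $1$) and should satisfy a law-of-large-numbers / fluid limit. The component $V_3^\alpha$ is microscopic (not rescaled in state) and should, given the macroscopic limit, converge to a time-inhomogeneous birth-death-immigration process. Joint convergence then follows by a martingale problem argument.

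The first step is to bound the two ``small'' components. For $V_0^\alpha$, each type-$0$ particle has birth rate $\alpha$ while its per-particle death rate contains the term
\begin{align*}
  \tfrac 12(L_1(1-c_0+c_1) + L_2(1-c_0+c_2) + L_3(1-c_0+c_3)) = \tfrac{\alpha}{2}(V_1^\alpha(1+c_1) + V_2^\alpha(1+c_2)) + o(\alpha),
\end{align*}
which exceeds $\alpha$ once $V_1^\alpha + V_2^\alpha$ is near $2$ (as guaranteed by Lemma~\ref{l:conc}). The only input for $L_0$ is recombination at rate $\tfrac{\rho}{2\alpha}L_1L_2 = O(\alpha)$ per unit time, so coupling with a subcritical birth-death process with constant immigration (and using Lemma~\ref{l:9}) yields $\mathbb E[L_0(t/\alpha)] = O(1)$, hence $V_0^\alpha \to 0$ uniformly on compacts. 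The same reasoning, combined with Theorem~\ref{T:tibd}, shows $L_3(t/\alpha) = O_P(1)$ on compact time intervals, so in particular $V_3^\alpha/\alpha \to 0$ and the quadratic-in-$L_3$ death term $\binom{L_3}{2}$ and the recombination-death term $\tfrac{\rho}{2\alpha}L_0 L_3$ are negligible.

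For the macroscopic convergence, I would apply generator convergence as in Lemma~\ref{l:3}. Applied to smooth test functions of $\underline v = (v_0, v_1, v_2)$, the rescaled generator of $\mathcal L$ has leading-order drift $b_i(\underline v) = v_i[1 - \tfrac 12(v_i + \sum_{j\neq i}v_j(1-c_i+c_j))]$ with fluctuation term of order $\alpha^{-1}$. Substituting $v_0, v_3 \approx 0$ and $v_1+v_2 \approx 2$ from the first step reduces the system to the stated logistic equation $\dot V_2 = (c_2-c_1)V_2(1-V_2/2)$ with $V_1 = 2-V_2$. Convergence on Skorokhod path space then follows from Theorem~4.8.10 of~\cite{EthierKurtz1986}, exactly as in Lemma~\ref{l:3}. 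For $V_3^\alpha$, the conditional dynamics (given the macroscopic trajectories) becomes, in rescaled time, birth at rate $V_3^\alpha$, death at rate $\tfrac 12 V_3^\alpha(c_1V_1^\alpha + c_2V_2^\alpha) + O(V_3^\alpha/\alpha)$, and immigration at rate $\tfrac{\rho}{2}V_1^\alpha V_2^\alpha$, which are exactly the rates of the target process $V_3$.

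The main obstacle is the joint convergence, because the immigration rate for $V_3^\alpha$ depends on the random macroscopic trajectories $V_1^\alpha, V_2^\alpha$, so the two convergences cannot be taken independently. The cleanest resolution is to work with a single martingale problem for the quadruple $(V_0^\alpha, V_1^\alpha, V_2^\alpha, V_3^\alpha)$. Tightness in $D([0,T], \mathbb R^3 \times \mathbb N_0)$ follows from the uniform bounds of the first step together with Aldous's criterion, and any subsequential weak limit must satisfy the martingale problem for the joint generator whose macroscopic part is deterministic (the ODE) and whose microscopic part is, conditionally, the birth/death/immigration process characterized by Theorem~\ref{T:tibd}. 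Uniqueness of that martingale problem (the ODE is deterministic, and conditional on it, the Markov branching process with immigration is determined by classical theory) identifies the limit and completes the proof.
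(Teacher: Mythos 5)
Your proposal is correct in substance and identifies the same limit dynamics as the paper, but it runs along a different technical track. The paper exploits the observation (Remark~\ref{rem:chem}) that $\mathcal L$ is a chemical reaction network: it writes the rescaled components as time-change equations driven by independent unit-rate Poisson processes, gets compact containment from Lemma~\ref{l:conc} (and, for $V_3^\alpha$, from domination by a binary branching process with immigration $\tfrac\rho2 V_1^\alpha V_2^\alpha$), and then invokes the multiscale limit theorem of Pfaffelhuber--Popovic (Lemma~2.4 there; see also Kang--Kurtz) to discard the lower-order terms and obtain the joint limit in one stroke -- including exactly the point you flag as the main obstacle, namely that the $\mathbb N_0$-valued component $V_3^\alpha$ is driven by the random macroscopic trajectories. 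Your route replaces this citation by a hands-on argument: separate bounds on the small components, generator convergence for the fluid part, and a joint martingale problem (tightness via Aldous, identification, uniqueness -- the latter being unproblematic since the macroscopic limit is a deterministic ODE and, conditionally on it, $V_3$ is a time-inhomogeneous birth-death-immigration process with locally bounded rates). What the paper's approach buys is brevity, since the cited lemma is tailored to precisely this two-scale situation; what your approach buys is self-containedness, at the price of having to carry out the tightness/identification bookkeeping explicitly. Your drift computation and the resulting dynamics agree with the paper's proof: you obtain $\dot V_2=(c_2-c_1)V_2(1-V_2/2)$, $V_1=2-V_2$, and individual death rate $\tfrac12(c_1V_1+c_2V_2)$ for $V_3$, which is what the paper's proof derives (the statement's printed ODE for $V_1$ and its death rate $\tfrac{c_1}2V_2+\tfrac{c_2}2V_1$ appear to have the labels interchanged).

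Two small corrections to your first step, neither of which affects the conclusion. First, $\mathbb E[L_0(t/\alpha)]$ is not $O(1)$: the initial $o(\alpha)$ type-$0$ particles are subcritical but only decay by a constant factor over a rescaled-time window of order one, so the correct bound is $o(\alpha)$, which still gives $V_0^\alpha\to0$. Second, type~$3$ is \emph{super}critical (per-particle net growth rate about $\alpha(1-\tfrac12(c_1V_1+c_2V_2))\geq\alpha(1-c_2)>0$), so ``the same reasoning'' does not literally apply; the correct justification for $L_3(t/\alpha)=O_P(1)$ on compacts is the comparison with a branching process with immigration over a bounded rescaled-time horizon, which is exactly the compact containment argument used in the paper.
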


\begin{proof}
  Recall that $\mathcal L$ can be seen as a chemical reaction network
  as described in Remark~\ref{rem:chem}. For such networks, limit
  results using a scaling parameter ($\alpha$ in our case) have been
  established in \cite{BallKurtzPopovicRempala2006}, \cite{KangKurtz2013},
  \cite{PfaffelhuberPopovic2015} and others. The following is an
  application of Lemma~2.4 of \cite{PfaffelhuberPopovic2015} (see also
  Theorem~4.1 of \cite{KangKurtz2013}). We use the representation of
  the process $\mathcal L$ using time-change equations of the form
  (recall that $c_0=0$)
  \begin{align*}
    V_0^\alpha(t) & = V_0^\alpha(0) + \frac{1}{\alpha} Y_{b0}\Big(\int_0^t \alpha V_0^\alpha(s)ds\Big) - 
                    \frac{1}{\alpha}
                    Y_{d0}\Big(\frac \alpha 2\int_0^t V_0^\alpha(s) \big(V_0^\alpha(s) - \tfrac{1}{\alpha}\big)ds\Big) 
    \\ & - \sum_{j=1,2} \frac{1}{\alpha}
         Y_{dj0}\Big(\frac 12 (1+c_j) \alpha \int_0^t V_0^\alpha(s) V_j^\alpha(s)ds\Big) - \frac{1}{\alpha}
         Y_{d30}\Big(\int_0^t V_0^\alpha(s) V_3^\alpha(s)ds\Big) 
    \\ & +
         \frac 1\alpha Y_{r120} \Big( \int_0^t \frac \rho 2 V_1^\alpha(s)V_2^\alpha(s) ds\Big) 
         - \frac 1\alpha (Y_{r031}+Y_{r032}) \Big(\int_0^t \frac{\rho}{2\alpha} V_0^\alpha(s)V_3^\alpha(s) ds\Big),\\
    V_1^\alpha(t) & = V_1^\alpha(0) + \frac{1}{\alpha} Y_{b1}\Big(\int_0^t \alpha V_1^\alpha(s)ds\Big) -
                    \frac{1}{\alpha}
                    Y_{d1}\Big(\frac \alpha 2\int_0^t V_1^\alpha(s) \big(V_1^\alpha(s) - \tfrac{1}{\alpha}\big)ds\Big) 
    \\ & - \sum_{j=0,2} \frac{1}{\alpha}
         Y_{dj1}\Big(\frac 12 (1-c_1+c_j)\alpha \int_0^t V_1^\alpha(s) V_j^\alpha(s)ds\Big) 
    \\ & \qquad \qquad \qquad \qquad \qquad \qquad - \frac{1}{\alpha}
         Y_{d31}\Big((1-c_1/2)\int_0^t V_1^\alpha(s) V_3^\alpha(s)ds\Big) 
    \\ & +
         \frac 1\alpha Y_{r031} \Big( \int_0^t \frac\rho{2\alpha} V_0^\alpha(s)V_3^\alpha(s) ds\Big) 
         - \frac 1\alpha (Y_{r120} + Y_{r123}) \Big(\int_0^t \frac \rho 2 V_1^\alpha(s)V_2^\alpha(s) ds\Big),\\
    V_2^\alpha(t) & = V_2^\alpha(0) + \frac{1}{\alpha} Y_{b2}\Big(\int_0^t \alpha V_2^\alpha(s)ds\Big) -
                    \frac{1}{\alpha}
                    Y_{d2}\Big(\frac \alpha 2 \int_0^t V_2^\alpha(s) \big(V_2^\alpha(s) - \tfrac{1}{\alpha}\big)ds\Big) 
    \\ & \qquad - \sum_{j=0,1} \frac{1}{\alpha}
         Y_{dj2}\Big(\frac 12 (1-c_2+c_j)\alpha \int_0^t V_2^\alpha(s) V_j^\alpha(s)ds\Big) 
    \\ & \qquad \qquad \qquad \qquad \qquad \qquad - \frac{1}{\alpha}
         Y_{d32}\Big((1-c_2/2)\int_0^t V_2^\alpha(s) V_3^\alpha(s)ds\Big) 
    \\ & +
         \frac 1\alpha Y_{r032} \Big( \int_0^t \frac\rho{2\alpha} V_0^\alpha(s)V_3^\alpha(s) ds\Big) 
         - \frac 1\alpha (Y_{r120} + Y_{r123}) \Big(\int_0^t \frac \rho 2 V_1^\alpha(s)V_2^\alpha(s) ds\Big),\\
    V_3^\alpha(t) & = V_3^\alpha(0) + Y_{b3}\Big(\int_0^t V_3^\alpha(s)ds\Big) -
                    Y_{d3}\Big(\frac 1 \alpha \int_0^t V_3^\alpha(s) (V_3^\alpha(s) - 1)ds\Big) 
    \\ & \qquad -
         Y_{d13}\Big(\frac{c_1}{2}\int_0^t V_3^\alpha(s) V_1^\alpha(s)ds\Big) 
         - Y_{d23}\Big(\frac{c_2}{2}\int_0^t V_2^\alpha(s) V_3^\alpha(s)ds\Big) 
    \\ & \qquad +
         Y_{r123} \Big( \int_0^t \frac \rho 2 V_1^\alpha(s)V_2^\alpha(s) ds\Big) 
         - (Y_{r031} + Y_{r032}) \Big(\int_0^t \frac\rho{2\alpha} V_0^\alpha(s)V_3^\alpha(s) ds\Big),
  \end{align*}
  where all $Y$'s are independent rate-one Poisson processes.

  From Lemma~\ref{l:conc}, $(V^\alpha_0, V^\alpha_1, V^\alpha_2)$
  satisfy the compact containment condition. Therefore, since
  $V^\alpha_3$ is bounded by a binary branching process with
  immigration~{\color{black}
    $\frac \rho 2 V_1^\alpha V_2^\alpha$}, it satisfies the compact
  containment condition as well. Therefore, neglecting all terms of
  lower order according to Lemma~2.4 of \cite{PfaffelhuberPopovic2015}
  (i.e.\ using that
  $\frac{1}{\alpha} Y(t) \xrightarrow{\alpha\to
    {\color{black}\infty}}0$),
  we can approximate $V^\alpha$ by $V$, which satisfies
  \begin{align*}
    V_0(t) 
    & =  V_0(0) + \int_0^t  V_0(s)ds - 
      \frac 12\int_0^t ( V_0(s))^2
      - \sum_{j=1,2}
      \frac 12 (1+c_j)  \int_0^t  V_0(s)  V_j(s)ds,
    \\
    V_1(t) 
    & =  V_1(0) + \int_0^t   V_1(s)ds -
      \frac 1 2\int_0^t ( V_1(s))^2 - \sum_{j=0,2} 
      \frac 12 (1-c_1+c_j) \int_0^t  V_1(s)  V_j(s)ds,\\
    V_2(t) & =  V_2(0) + \int_0^t  V_2(s)ds -
             \frac 1 2 \int_0^t ( V_2(s))^2 ds 
    \\ & \qquad \qquad \qquad \qquad \qquad \qquad 
         - \sum_{j=0,1}\frac 12 (1-c_2+c_j) \int_0^t  V_2(s)  V_j(s)ds,
    \\ 
    V_3(t) & =  V_3(0) + Y_{b3}\Big(\int_0^t  V_3(s)ds\Big) -
             Y_{d13}\Big(\frac{c_1}{2}\int_0^t  V_3(s)  V_1(s)ds\Big) 
    \\ & \qquad \qquad \qquad \qquad 
         - Y_{d23}\Big(\frac{c_2}{2}\int_0^t  V_2(s)  V_3(s)ds\Big) 
         +         Y_{r123} \Big( \int_0^t \frac \rho 2 V_1(s)  V_2(s) ds\Big).
  \end{align*}
  Since $V^\alpha_0(0) \xrightarrow{\alpha\to\infty}0$, we see that
  $V_0=0$. Consequently, since $V_0 + V_1 + V_2=2$ by
  Lemma~\ref{l:conc}, we see that $V_1 + V_2=2$ and therefore
  $(V_1, V_2)$ satisfy
  \begin{align*}
    dV_1 & = \Big(V_1 - \frac{1}{2}V_1(V_1 + V_2) - \frac 12 (c_2-c_1)V_1V_2\Big) dt = - \frac 12 (c_2-c_1)V_1(2-V_1)dt,\\
    dV_2 & = -dV_1,
  \end{align*}
  and $V_3$ is a branching process with splitting rate~1, (individual)
  death rate $\frac{c_1}{2}V_1 + \frac{c_2}{2}V_2$ and immigration
  rate $\frac{\rho}{2}V_1 V_2$, as claimed.
\end{proof}

\section{Proof of Theorems~\ref{theoremfixprob2} and~\ref{T:fixTime}}
\label{S:proofs}
In the light of Proposition~\ref{P:fixProbL}, we have to show fixation
of type~3 in $L$. In order to do this, we give a fundamental result
(Proposition~\ref{P:fixScenario}) in Subsection~\ref{Sec:41}. Proofs
of both theorems are given in Sections~\ref{Sec:proof1} and
\ref{Sec:proof2}, respectively.

\subsection{ A fundamental result; Proofs of both Theorems}
\label{Sec:41}
The following result is fundamental for the proofs of our main
results. Its proof is given in the next subsection. For illustration,
consult Figures~\ref{fig:compSweep1} and~\ref{fig:compSweep2}.

\begin{proposition}[Scenarions of fixation and $\mathcal L$\label{P:fixScenario}]
  Assume the same situation as in Theorem~\ref{theoremfixprob2} and
  $\mathcal L$ as in Definition~\ref{def:L0}.
  \begin{enumerate}
  \item If $\psi < \frac{c_{1}}{c_{2}}$, let
    \begin{equation}
      \label{eq:taus}
      \begin{aligned}
        p & = 1-\left(\frac{1-c_{2}}{1-c_{1}}\right)^{\frac{2\rho
            (1-c_{2}) (1-c_{1})}{(c_{2}-c_{1})^2}},\\
        \tau_1 & = \frac{\psi}{c_1},\\
        \tau_2 & = \tau_1 + \frac{1}{c_2-c_1}\Big(1- \frac{c_2\psi}{c_1}\Big),\\
        \tau_3 & = \tau_2 + \frac{1}{1-c_2},\\
        \tau_4 & = \tau_3 + \frac{1}{1-c_2},
      \end{aligned}
    \end{equation}
    and $Q = (Q_2, Q_3)$ be a stochastic process as follows. Starting
    with $(0, -\infty)$, the process is with probability $1-c_2$
    \begin{align}\label{P33a}
      Q_2(\tau) =     
      \begin{cases}
        0, & \tau = 0,\\
        -\infty, & \tau >0
      \end{cases} \quad \text{ and }\qquad Q_3(\tau) = -\infty,
    \end{align}
    with probability $c_2(1-p)$
    \begin{align}\label{P33b}
      Q_2(\tau) =     
      \begin{cases}
        c_2\tau , & \tau \leq \tau_1,\\
        \frac{c_2}{c_1}\psi + (c_2-c_1)(\tau - \tau_1), &
        \tau_1 < \tau < \tau_2,\\
        1, & t > \tau_2
      \end{cases} \qquad \text{and}\qquad Q_3(\tau) = -\infty,
    \end{align}
    with probability $c_2 p$
    \begin{align}\label{P33c}
      (Q_2(\tau), Q_3(\tau)) =     
      \begin{cases}
        (c_2\tau, -\infty), & \tau \leq \tau_1,\\
        \Big(\frac{c_2}{c_1}\psi + (c_2-c_1)(\tau - \tau_1),
        -\infty\Big), &
        \tau_1 < \tau \leq \tau_2,\\
        (1, (c_3-c_2)(\tau - \tau_2)), & \tau_2 < \tau \leq \tau_3,\\
        (1 - (1-c_2)(\tau - \tau_3), 1), & \tau_3 < \tau \leq \tau_4,\\
        (-\infty, 1), & \tau > \tau_4.
      \end{cases}
    \end{align}
    Then, (with $\Rightarrow$ denoting
    convergence of finite-dimensional distributions),
    $$ \Bigg(\frac{\log L_2\Big(\tau \frac{\log\alpha}{\alpha}\Big)}{\log\alpha}, 
    \frac{\log L_3\Big(\tau
      \frac{\log\alpha}{\alpha}\Big)}{\log\alpha}\Bigg)_{\tau\geq 0,
      \tau\neq \tau_2} \xRightarrow{\alpha\to\infty} (Q_2(\tau),
    Q_3(\tau))_{\tau\geq 0, \tau\neq \tau_2}.$$
    In the last case \eqref{P33c}, we find that
    $L_j\Big(\tau \frac{\log\alpha}{\alpha}\Big)
    \xRightarrow{\alpha\to\infty} 0$
    for $j=0,1,2$ if and only if $\tau\geq \tau_4$.
  \item If $\frac{c_{1}}{c_{2}} < \psi \leq 1$, let
    \begin{align*}
      \sigma_1 & = \frac{1}{c_2},\\
      \sigma_2 & = \sigma_1 + \frac{1-\psi + c_1/c_2}{c_2 - c_1}.
    \end{align*}
    Let $R = (R_1, R_2, R_3)$ be a stochastic process as
    follows. Starting with $(1-\psi, 0, -\infty)$, the process is with
    probability $1-c_2$
    \begin{align}\label{P33ba}
      R_1(\tau) =     
      \begin{cases}
        1-\psi + c_1\tau, & 0\leq\tau\leq \tau_1,\\
        1, & \tau > \tau_1
      \end{cases} \text{ and } (R_2, R_3) = (Q_2, Q_3) \text{ from~\eqref{P33a}}, 
    \end{align}
    with probability $c_2$
    \begin{align}\notag
      (R_1& (\tau), R_2(\tau), R_3(\tau)) 
      \\ & =     \label{P33ca}
      \begin{cases}
        (1-\psi + c_1\tau, c_2\tau, -\infty), & 0\leq\tau\leq \sigma_1,\\
        (1-\psi + c_1\sigma_1 - (c_2-c_1)(\tau - \sigma_1), 1, -\infty), &
        \sigma_1 < \tau \leq \sigma_2,\\
        (-\infty, 1, -\infty), & \tau > \sigma_2.
      \end{cases}
    \end{align}
    Then,
    $$ \Bigg(\frac{\log L_i\Big(\tau \frac{\log\alpha}{\alpha}\Big)}{\log\alpha}\Bigg)_{i=1,2,3,\tau\geq 0} 
    \xRightarrow{\alpha\to\infty} (R_i(\tau))_{i=1,2,3,\tau\geq 0}.$$
    In particular, $L_3\xRightarrow{\alpha\to\infty} 0$.
  \end{enumerate}
\end{proposition}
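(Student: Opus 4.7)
The plan is to decompose the dynamics of $\mathcal L$ into phases determined by which types are at macroscopic order~$\alpha$, and to verify the claimed log-scale trajectory phase by phase using the results of Section~\ref{S:aux}. By Lemma~\ref{l:conc} the total number of lines stays concentrated around $2\alpha$, so in each phase I identify the bulk type and treat the remaining types as (possibly time-inhomogeneous) branching processes against that background; Lemma~\ref{l:10} will additionally guarantee that type~3 does not appear before type~2 becomes macroscopic.

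In the initial phase type~0 dominates, so $L_1$ and $L_2$ are independent binary branching processes with birth rate $\alpha$ and death rate $(1-c_j)\alpha(1+o(1))$, $j=1,2$. For $L_2$, starting from~$1$, Corollary~\ref{c3} yields extinction with probability $1-c_2+o(1)$---giving case~\eqref{P33a} in Case~1 and the analogue~\eqref{P33ba} in Case~2---and on the survival event the log-slope $c_2\tau$; for $L_1$, starting from $2\alpha^{1-\psi}(1+o(1))$ lines, survival is automatic for $\psi<1$ and Lemma~\ref{l:growthBin} gives the log-slope $(1-\psi)+c_1\tau$. In Case~1 ($\psi<c_1/c_2$), $L_1$ reaches $\alpha$ first at $\tau_1=\psi/c_1$; Lemma~\ref{l:3}, applied after truncating at $L_1=\varepsilon_\alpha\alpha$ with $\varepsilon_\alpha\downarrow 0$ as in~\eqref{eq:l32}, handles the $O(1/\alpha)$ logistic takeover of type~0 by type~1, after which $L_2$ sees effective net rate $(c_2-c_1)\alpha$ and a second application of Lemma~\ref{l:growthBin} carries $\log L_2/\log\alpha$ linearly from $c_2\psi/c_1$ at $\tau_1$ to~$1$ at~$\tau_2$.

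Near $\tau_2$ I would invoke Proposition~\ref{P:rescale}: the rescaled $(V_1^\alpha,V_2^\alpha)$ converges to the deterministic logistic limit describing the takeover of type~1 by type~2, and $V_3^\alpha$ converges to a time-inhomogeneous binary branching process with immigration. The substitution $y=V_2/2$ identifies this limit (after sending the starting value $\varepsilon\to 0$) with the process of Proposition~\ref{P:49}, the factor of~$2$ in $V_1+V_2\to 2$ accounting for the factor $2\rho$ in the exponent of~$p$; hence, conditional on type~2 having survived the initial phase, type~3 establishes with probability exactly~$p$. Once established, $L_3$ grows at effective rate $(1-c_2)\alpha$ against a type-2 background, so Lemma~\ref{l:growthBin} brings $\log L_3/\log\alpha$ linearly to~$1$ at $\tau_3=\tau_2+1/(1-c_2)$; a further Lemma~\ref{l:3} completes the $O(1/\alpha)$ takeover, and Lemma~\ref{l:5} (with $p=1$, $c=1-c_2$) yields the linear decrease of $\log L_2/\log\alpha$ on $[\tau_3,\tau_4]$ followed by extinction at~$\tau_4$, matching~\eqref{P33c}. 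Splitting by the Markov property at $\tau_2$ produces the weights $1-c_2$, $c_2(1-p)$, $c_2 p$. Case~2 ($c_1/c_2<\psi\le 1$) is analogous but simpler: $\sigma_1=1/c_2<\tau_1$, so $L_2$ (if it survived) reaches $\alpha$ first with $L_1$ still at $\alpha^{1-\psi+c_1/c_2}$; after the fast takeover, $L_1$ is sub-critical with decay rate $(c_2-c_1)\alpha$ and Lemma~\ref{l:5} (with $p=1-\psi+c_1/c_2$, $c=c_2-c_1$) yields~\eqref{P33ca}, while the expectation bound $\mathbb E[\int(\rho/(2\alpha))L_1(t)L_2(t)\,dt]=O(\alpha^{-(\psi-c_1/c_2)})=o(1)$ together with Markov's inequality ensures no type-3 line ever appears, so $R_3\equiv -\infty$.

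The main obstacle will be making the inter-phase transitions sharp on the $\log\alpha/\alpha$ time scale and coupling the successive branching-process approximations consistently through the $O(1/\alpha)$ logistic intermediate windows. The standard device, following Lemma~\ref{l:3} and~\eqref{eq:l32}, will be to pick $\varepsilon_\alpha\downarrow 0$ slowly enough that both the branching-process slope immediately before each transition and the logistic takeover through it cost only $o(1)$ in $\tau$-time; convergence of finite-dimensional distributions then follows by concatenating the phase-wise limits, with the exclusion $\tau\neq\tau_2$ arising because in the $\alpha\to\infty$ limit the type-3 establishment event becomes instantaneous on the $\tau$-scale.
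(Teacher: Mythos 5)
Your proposal is correct and follows essentially the same route as the paper's proof: the same phase decomposition via Lemma~\ref{l:conc} and Lemma~\ref{l:10}, the same branching-process comparisons (Corollary~\ref{c3}, Lemma~\ref{l:growthBin}, Lemma~\ref{l:5}, Lemma~\ref{l:3} for the fast logistic windows), and the same identification of the establishment probability $p$ through Proposition~\ref{P:rescale} combined with Proposition~\ref{P:49}, with Case~2 handled by the identical expectation bound on recombinant production. Your explicit remark that the substitution $y=V_2/2$ and the immigration rate $\tfrac{\rho}{2}V_1V_2=2\rho y(1-y)$ account for the factor $2\rho$ in the exponent of $p$ is a point the paper leaves implicit, but it is the same argument.
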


\begin{figure}
  \hspace{3cm} 
  \begin{center}
    \begin{tabular}{cc}
      (A) \hspace{1cm} & \parbox{9cm}{\includegraphics[width=9cm]{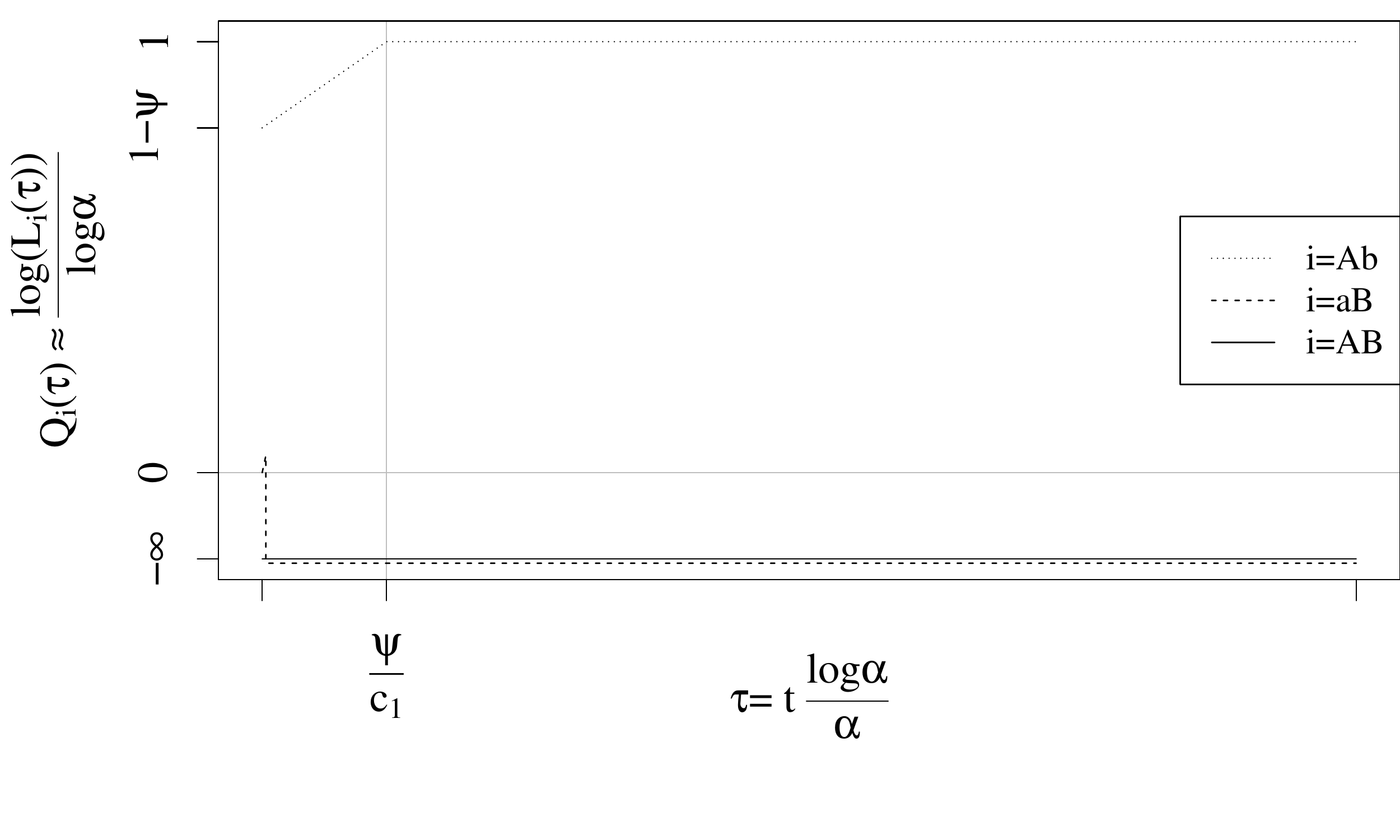}} \\
      (B) \hspace{1cm} & \parbox{9cm}{\includegraphics[width=9cm]{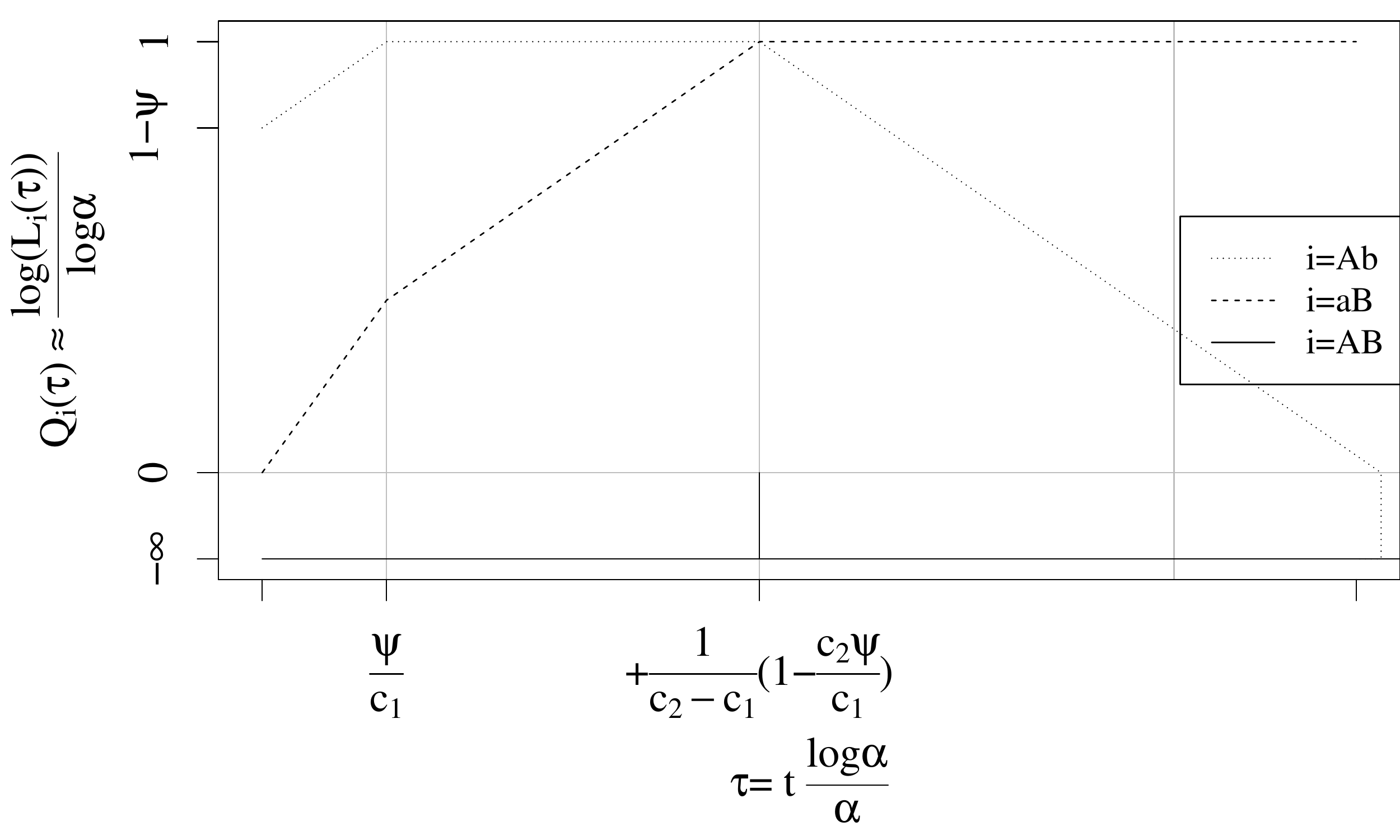}} \\
      (C) \hspace{1cm} & \parbox{9cm}{\includegraphics[width=9cm]{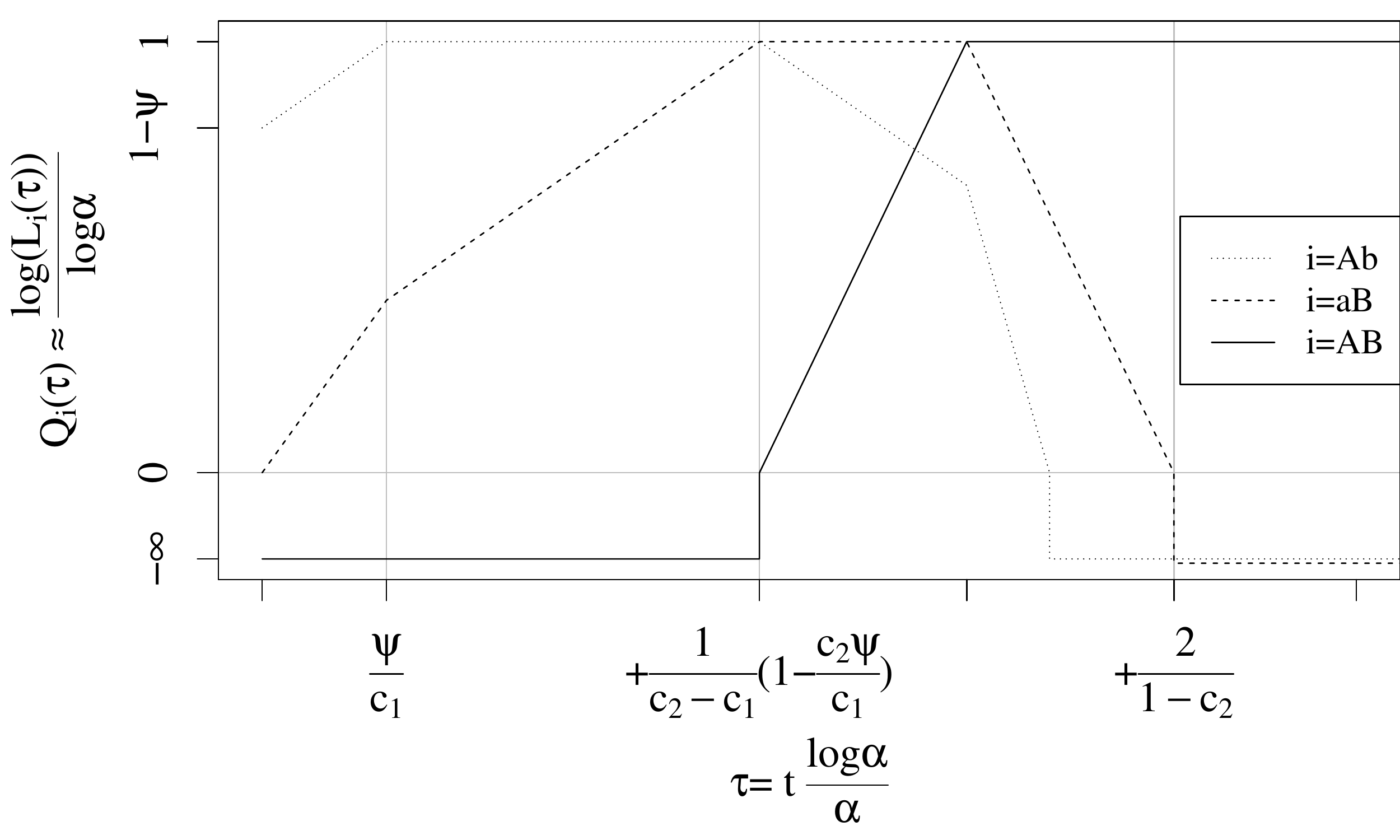}} \\
    \end{tabular}
  \end{center}
  \caption{\label{fig:compSweep1} For the limit of the process
    $(Q^i)_{i=1,2,3}$ as $\alpha\to\infty$ in the case
    $\psi < \frac{c_1}{c_2}$, there are three possibilities. (Note
    that convergence towards $Q^1$ is not claimed in
    Proposition~\ref{P:fixScenario}, but is displayed here for
    completeness.) (A) Type $2\equiv aB$ does not even establish,
    leading to quick fixation of $1\equiv Ab$. No $3\equiv AB$ is
    produced. This happens with probability $1-c_2$. (B) Type $aB$
    establishes, but no successful type $AB$ is created during the
    spread of type $aB$. This happens with probability $c_2 (1-p)$.
    (C) Type $aB$ establishes, successful types $AB$ are created and
    they spread through the whole population. This happens with
    probability $c_2 p$.}
\end{figure}

\begin{figure}
  \hspace{3cm} 
  \begin{center}
    \begin{tabular}{cc}
      (A) \hspace{1cm} & \parbox{9cm}{\includegraphics[width=9cm]{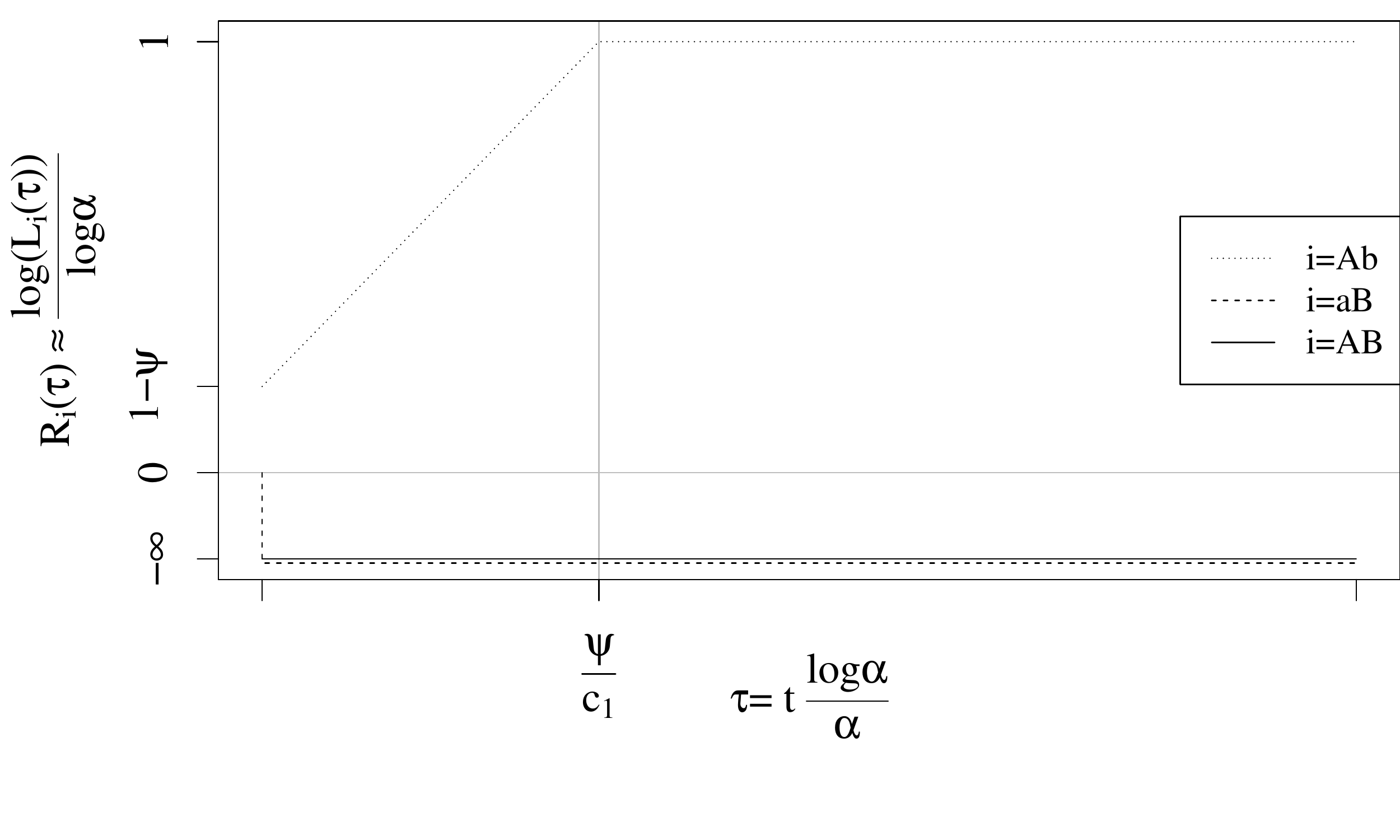}} \\
      (B) \hspace{1cm} & \parbox{9cm}{\includegraphics[width=9cm]{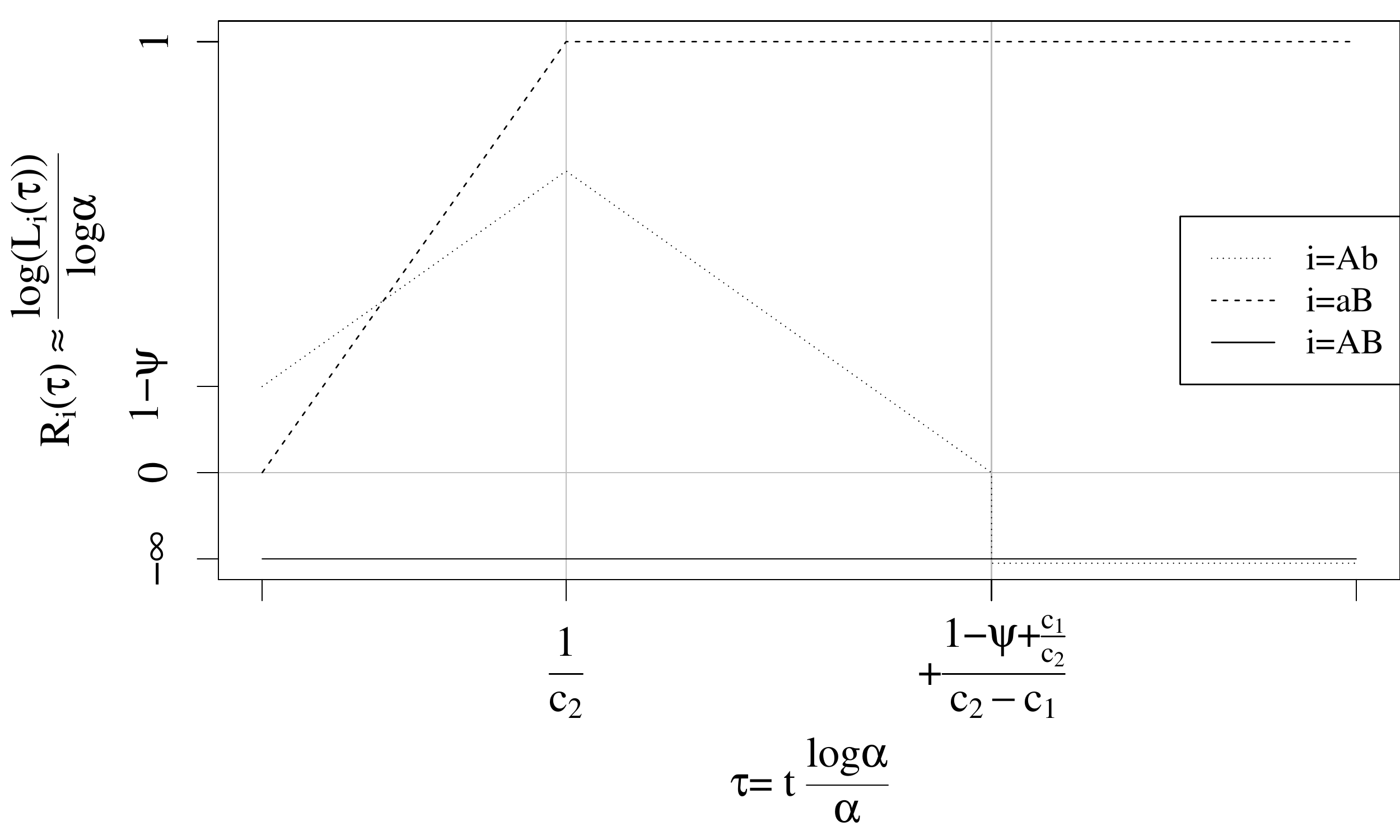}} \\
    \end{tabular}
  \end{center}
  \caption{\label{fig:compSweep2} For the limit of the process
    $(R^i)_{i=1,2,3}$ as $\alpha\to\infty$ in the case
    $\frac{c_1}{c_2} < \psi \leq 1$, there are two possibilities. In
    both cases, type~3 does not occur, so fixation of this type has a
    probability converging to~0.}
\end{figure}

\begin{proof}
  During the proof, we will make use of sequences
  $\varepsilon_\alpha \downarrow 0$. Their precise value will change
  from occurrence to occurence.\\
  We start with the initial phase. Let
  $\ell_1, \ell_2 \leq \varepsilon_\alpha \alpha$ for
  $\varepsilon_\alpha\xrightarrow{\alpha\to\infty}0$. We bound the
  rates $r_i^+, r_i^-, i=1,2$ of in- and decrease of $L_1$ and $L_2$
  before
  $T^1_{\varepsilon_\alpha\alpha}\wedge
  T^2_{\varepsilon_\alpha\alpha}$.
  During this time, we have that
  $\ell_0 \stackrel{\alpha\to\infty} = 2\alpha + o(\alpha)$ by
  Lemma~\ref{l:conc} and $\ell_3=0$ by Lemma~\ref{l:10}. Hence,
  $r_i^+, r_i^-$ satisfy
  \begin{align*}
    r_i^+ & := \alpha\ell_i,\qquad i=1,2,
    \\
    r_1^- & = \frac 12 \ell_1 \ell_0(1 - c_1) + \ell_1 \cdot o(\alpha) = \ell_1 \alpha(1-c_1 + o(1)),
    \\ 
    r_2^- & := \frac 12 \ell_2 \ell_0(1 - c_2) + \ell_2 \cdot o(\alpha) = \ell_2 \alpha(1-c_2 + o(1))
  \end{align*}
  before
  $T^1_{\varepsilon_\alpha\alpha}\wedge
  T^2_{\varepsilon_\alpha\alpha}$.
  So, $L_1$ behaves like a birth-death process as in
  Lemma~\ref{l:growthBin}, starting in $c\alpha^{1-\psi}$ for some
  $c>0, 0<\psi\leq 1$, and $L_2$ behaves like a birth-death process as
  in Corollary~\ref{c3} (starting in $L_2=1$). From the latter,
  $(\log L_2((\tau \log\alpha)/\alpha)/\log\alpha)$ converges to a
  random variable which is $c_2\tau$ with probability $c_2$ and
  $-\infty$ with probability $1-c_2$ as long as
  $(\tau \log\alpha)/\alpha < T_{\varepsilon_\alpha\alpha}^1 \wedge
  T_{\varepsilon_\alpha\alpha}^2$.
  Moreover, according to Lemma~\ref{l:growthBin},
  $(\log L_1((\tau \log\alpha)/\alpha)/\log\alpha)$ converges to
  $1-\psi + c_1\tau$ as long as
  $(\tau \log\alpha)/\alpha < T_{\varepsilon_\alpha\alpha}^1 \wedge
  T_{\varepsilon_\alpha\alpha}^2$.
  From these conclusions and Lemma~\ref{l:5a}, we see that
  $$ T_{\varepsilon_\alpha\alpha}^1 \wedge
  T_{\varepsilon_\alpha\alpha}^2 = \frac\psi{c_1}\wedge \frac
  1{c_2}.$$
  
  \sloppy 2. We can now proceed to show our result for
  $\frac{c_1}{c_2} < \psi {\color{black} \leq} 1$. With probability
  $c_2$, in the initial phase, according to Corollary~\ref{c3},
  $(\log L_2((\tau \log\alpha)/\alpha)/\log\alpha)$ increases
  approximately linearly with speed $c_2$. In this case, for any
  $\varepsilon_\alpha\downarrow 0$ and
  $\tau_\alpha = \frac 1{c_2} - \varepsilon_\alpha$, we find that
  $(\log L_2((\tau_\alpha) \log\alpha)/\alpha)/\log\alpha)
  \xrightarrow{\alpha\to\infty}1$
  whereas -- from Lemma~\ref{l:growthBin} --
  $(\log L_1((\tau_\alpha \log\alpha)/\alpha)/\log\alpha)
  \xrightarrow{\alpha\to\infty}1 - \psi + \frac{c_1}{c_2}<1$.
  From Lemma~\ref{l:3}, we see that we can choose $\varepsilon_\alpha$
  such that $L_2$ hits $2\alpha(1-\varepsilon_\alpha)$ after some time
  of duration $o((\log\alpha)/\alpha)$. After
  $T^2_{2\alpha(1-\varepsilon_\alpha)}$, we have that
  $L_0 = o(\alpha)$, $L_1 = o(\alpha)$, $L_2 = 2\alpha + o(\alpha)$
  and $L_1$ has rate of decrease
  \begin{align*}
    r_1^- & = \ell_1 \alpha(1-c_1+c_2 + o(1)).
  \end{align*}
  So from here on, $(\log L_1((\tau \log\alpha)/\alpha)/\log\alpha)$
  decreases linearly with speed $c_2-c_1$ due to Lemma \ref{l:5} and
  hits $-\infty$ at time approximately $\sigma_2$. During this whole
  process, the expected number of particles of type~3 which are
  created is bounded for some small $\delta>0$ and some $c>0$ by
  $$ \mathbb E\Big[\int_0^{\sigma_2 \frac{\log\alpha}{\alpha}} \frac{\rho}{\alpha}L_1(s)L_2(s) ds\Big] \leq
  c\frac{\log\alpha}{\alpha} \frac{\rho}{\alpha} \alpha^{1-\psi +
    c_1/c_2 + \delta} \alpha ds = o(1),$$
  so $(\log L_3((\tau \log\alpha)/\alpha)/\log\alpha)=-\infty$ with
  high probability for all $\tau$. This shows all assertions of 2.

  1. We have already seen that initially
  $(\log L_2((\tau \log\alpha)/\alpha)/\log\alpha)$ increases
  approximately linearly with speed $c_2$ with probability $c_2$, and
  with probability $1-c_2$, we have the situation from
  \eqref{P33a}. In the sequel, we assume the linear increase, which
  happens with probability $c_2$.  Since $\psi < \frac{c_1}{c_2}$, we
  find with Lemma~\ref{l:5a} that
  $(\log L_1((\tau_1 \log\alpha)/\alpha)/\log\alpha)
  \xrightarrow{\alpha\to\infty} 1$
  and
  $(\log L_2((\tau_1 \log\alpha)/\alpha)/\log\alpha)
  \xrightarrow{\alpha\to\infty} \psi\frac{c_2}{c_1}$.
  By the fast middle phase of a sweep from Lemma~\ref{l:3}, for some
  $\varepsilon_\alpha\downarrow 0$, it is
  $L_1((\tau_1 + \varepsilon_\alpha)(\log\alpha)/\alpha) = 2\alpha +
  o(\alpha)$. From this, we see that $L_2$ has rate of decrease
  $$ r_2^- = \ell_2\alpha(1-c_2+c_1 + o(1)),$$
  as long as $\ell_2 \leq \varepsilon_\alpha \alpha$ for some
  $\varepsilon_\alpha\downarrow 0$. From Lemma~\ref{l:5}, we see that
  $\log L_2((\tau_1+\tau)(\log\alpha)/\alpha)$ increases linearly at
  speed $c_2-c_1$ until $L_2$ hits $\varepsilon_\alpha \alpha$ for
  some $\varepsilon_\alpha\downarrow 0$, which happens at some time
  $(\tau_2 + \varepsilon_\alpha)(\log\alpha)/\alpha$.  Since we know
  that $T_1^3 > T_{\varepsilon_\alpha \alpha}^2$ with high probability
  for any $\varepsilon_\alpha\downarrow 0$ from Lemma~\ref{l:10}, we
  see from Proposition~\ref{P:rescale} that for $\varepsilon>0$ small
  enough,
  $$\Big(L_1\Big(T^2_{\varepsilon\alpha} + \frac{t}{\alpha}\Big), L_2\Big(T^2_{\varepsilon\alpha} + \frac{t}{\alpha}\Big), 
  L_3\Big(T^2_{\varepsilon\alpha} + \frac t\alpha\Big)\Big)_{t\geq
    0}$$
  converges towards a process $(V^1, V^2, V^3)$ with
  $V^1(0) = 2(1-\varepsilon)$, $V^2(0)=\varepsilon$ and $V^3(0)=0$ as
  in Proposition~\ref{P:rescale}. From Proposition~\ref{P:49} we find
  that $V^3$ survives with probability $p$ and goes extinct with
  probability $1-p$. Therefore, with probability $1-p$, we have
  that $L_3((\tau_2 + \tau)(\log\alpha)/\alpha)=0$ for any $\tau>0$
  (and therefore
  $\log L_3((\tau_2+\tau)(\log\alpha)/\alpha)=-\infty$).  With
  probability $p$, the process $L_3$ survives, so for any small
  $\delta>0$, by time $(\tau_2 + \delta)(\log\alpha)/\alpha$, the
  process $L_3$ has death rate
  $$r_3^- = \ell_3 \alpha (1-c_2 + o(1)),$$ and therefore,
  $\log L_3((\tau_2 + \tau)(\log\alpha)/\alpha)$ increases by
  Lemma~\ref{l:growthBin} approximately linearly at speed $1-c_2$
  until $T^3_{\varepsilon_\alpha\alpha}$ for some
  $\varepsilon_\alpha\downarrow 0$, hence
  $T^3_{\varepsilon_\alpha\alpha}=\tau_3 + o(1)$ with probability
  $p$. From here on, the argument follows along the same line as in
  2.: During a time of duration of order $o((\log\alpha)/\alpha)$,
  $L_3$ grows logistically up to $2(1-\varepsilon_\alpha)\alpha$, and
  then $\log L_2((\tau_3 + \tau)(\log\alpha)/\alpha)$ decreases
  linearly at speed $1-c_2$ until it reaches $0$ and then jumps to
  $-\infty$ at time~$\tau_4$.
\end{proof}

\subsection{Proof of Theorem~\ref{theoremfixprob2}}
\label{Sec:proof1}
1. By Proposition~\ref{P:fixProbL}, the assertion of the Theorem
translates to
\begin{align*}
  \lim_{\alpha\to\infty} \mathbb P(L_j(\infty)=0, j\neq 3) = c_2 p
\end{align*}
with $p$ from {\color{black} case~1 of}
Proposition~\ref{P:fixScenario}. From the latter proposition, we see
that type~3 only fixes (within $L$ in the sense that eventually
$L_j(t)=0$ for $j=0,1,2$) in the case~\eqref{P33c} with probability
$c_2p$, which shows the assertion.\\
2. Here, we have to show that
\begin{align*}
  \lim_{\alpha\to\infty} \mathbb P(L_j(\infty)=0, j\neq 3) = 0.
\end{align*}
{\color{black} First, we treat the case $\psi>1$. Since $L_1(0) \sim \text{Poi}(2\alpha^{1-\psi})$, we find that
  $ L_1 \xRightarrow{\alpha\to\infty} 0$, and therefore 
  $ L_3 \xRightarrow{\alpha\to\infty} 0$, since there is no chance that type $3\equiv AB$ forms due to recombination.
  Therefore, the assertion holds in this case.  In the case
  $c_1/c_2 < \psi \leq 1$, the assertion}
follows from {\color{black}case~2 in}
Proposition~\ref{P:fixScenario}, since
$L_3\xRightarrow{\alpha\to\infty}0$ in all cases.

\subsection{Proof of Theorem~\ref{T:fixTime}}
\label{Sec:proof2}
Recall $\tau_4$ from Proposition~\ref{P:fixScenario} and note that
$$ \tau_4 = \frac{1-\psi}{c_2-c_1} + \frac{2}{1-c_2},$$
which is the limit in probability of $\frac{\alpha}{\log\alpha}S$
from the Theorem. First, for $\varepsilon>0$, using
Proposition~\ref{P:fixProbL}
\begin{align*}
  \lim_{\alpha\to\infty} 
  & 
    \lim_{\delta\to\infty}\mathbb P_{\underline x_{\delta, \psi}}
    \Big(\frac{\alpha}{\log\alpha} S < \tau_4 + \varepsilon \big|S<\infty\Big)
    = \lim_{\alpha\to\infty} \lim_{\delta\to\infty}\frac{\frac{1}{2\alpha\delta}\mathbb 
    P_{\underline x_{\delta, \psi}}
    \Big(\frac{\alpha}{\log\alpha} S < \tau_4 
      + \varepsilon\Big)}{\frac{1}{2\alpha\delta}\mathbb P_{\underline x_{\delta,\psi}}(S<\infty)} 
  \\ & = \lim_{\alpha\to\infty} \frac{\mathbb P\Big(L_j\Big((\tau_4 + \varepsilon)
       \frac{\log\alpha}{\alpha}\Big) = 0, j\neq 3\Big)}{\mathbb P(L_j(\infty) = 0, j\neq 3)}=1
\end{align*}
because, from Proposition~\ref{P:fixScenario}, we see that both the
numerator and denominator equal $c_2p$ in the limit
$\alpha\to\infty$. Moreover, using the same arguments,
\begin{align*}
  \lim_{\alpha\to\infty} 
  \lim_{\delta\to\infty}\mathbb P_{\underline x_{\delta, \psi}}
  \Big(\frac{\alpha}{\log\alpha} S & < \tau_4 - \varepsilon \big|S<\infty\Big)
  \\ & = \lim_{\alpha\to\infty} \frac{\mathbb P\Big(L_j\Big((\tau_4 - \varepsilon)
       \frac{\log\alpha}{\alpha}\Big) = 0, j\neq 3\Big)}{\mathbb P(L_j(\infty) = 0, j\neq 3)} = 0, 
\end{align*}
because the numerator is~0 according to
Proposition~\ref{P:fixScenario}, since no scenario gives fixation of
type~3 already by time $\tau_4 - \varepsilon$.

\subsection*{Acknowledgements}
We thank Martin Hutzenthaler for several discussions on competing
sweeps and Wolfgang Stephan {\color{black} and an anonymous reviewer}
for helpful comments. This research was supported by the DFG through
the research unit 1078 and the priority program 1590, and in
particular through grant Pf-672/3-1 and Pf-672/6-1.


\end{document}